\documentclass[reqno,12pt,a4paper]{amsart}
\usepackage[utf8]{inputenc}

\theoremstyle{definition}

\theoremstyle{remark}

\numberwithin{equation}{section}

\usepackage{amssymb,eucal,mathrsfs,array,setspace,geometry,enumitem,cite,tensor,amsmath,wrapfig,amscd,mathptmx,graphicx,bm,multirow,multicol,adjustbox}
\usepackage[dvipsnames]{xcolor}
\usepackage{tikz}
\usetikzlibrary{knots,calc}
\usetikzlibrary{decorations.markings}
\usepackage[centertableaux]{ytableau}
\usepackage{txfonts}            % arXiv does not support newtexmath  yet. Falling back on txfonts
\usepackage{mathtools} % needed for cases* and the bra/ket constructions
\usepackage{stmaryrd} % adds llbracket etc...

\usepackage{caption}
\usepackage{subcaption}
\usepackage{xcolor}

\usetikzlibrary{positioning}
\usetikzlibrary{decorations.markings}

\usepackage[colorlinks=true,citecolor=red,linkcolor=blue]{hyperref} % load _before_ cleveref!
\usepackage[capitalise,noabbrev]{cleveref}

\usepackage{tikz-cd}
\usetikzlibrary{decorations.pathmorphing}

\usetikzlibrary{knots}

\DeclareSymbolFont{largesymbols}{OMX}{zplm}{m}{n} % Replaces summation/product symbols in txmath by the palatino ones...

\setitemize{leftmargin=*}   % Removes margin from itemized lists (enumitem package)
\setenumerate{leftmargin=*} % Removes margin for enumerate lists (enumitem package)
\setlist[enumerate,1]{label=\textup{(\arabic*)}, % and makes labels upright numbers
  ref=\arabic*} 
\setlist[enumerate,2]{label = \textup{(\roman*)},
  ref = \theenumi.\roman*}

\let\originalleft\left     % removes spurious spacing around \left and \right brackets
\let\originalright\right
\renewcommand{\left}{\mathopen{}\mathclose\bgroup\originalleft}
\renewcommand{\right}{\aftergroup\egroup\originalright}

\newcolumntype{C}{>{$}c<{$}} %Defines math mode in tabular (array package)

\numberwithin{equation}{section}
\allowdisplaybreaks

\renewcommand{\ge}{\geq}%slant}
\renewcommand{\le}{\leq}%slant}

\DeclarePairedDelimiter{\brac}{\lparen}{\rparen} % use \brac for (...) and \brac* to automatically scale the ( and )
\DeclarePairedDelimiter{\sqbrac}{\lbrack}{\rbrack} % use \sqbrac[\big] for \bigl(...\bigr) etc...
\DeclarePairedDelimiter{\set}{\lbrace}{\rbrace}
\newcommand{\st}{\mspace{5mu} \vert \mspace{5mu}} % "such that" in sets
\DeclarePairedDelimiter{\abs}{\lvert}{\rvert}

\DeclarePairedDelimiter{\powser}{\llbracket}{\rrbracket} % [[ ... ]] using stmaryrd

\DeclarePairedDelimiterX{\comm}[2]{\lbrack}{\rbrack}{#1 , #2}  % commutators
\DeclarePairedDelimiterX{\acomm}[2]{\lbrace}{\rbrace}{#1 , #2} % anticommutators
\DeclarePairedDelimiterX{\super}[2]{\lparen}{\rparen}{#1 \delimsize\vert \mathopen{} #2} % for super args (m|n)

\DeclareMathOperator{\diag}{diag}

\newcommand{\wun}{\mathbf{1}}  % the unit of all sorts of things

\DeclareMathOperator{\Res}{Res}

\newcommand{\lra}{\longrightarrow}
\DeclareMathOperator{\Hom}{Hom}

\newcommand{\fld}[1]{\mathbb{#1}}    % for fields and related things
\newcommand{\alg}[1]{\mathfrak{#1}}  % for Lie algebras
\newcommand{\grp}[1]{\mathsf{#1}}    % for groups
\newcommand{\categ}[1]{\mathscr{#1}} % categories (requires mathrsfs package)

\newcommand{\ZZ}{\fld{Z}}
\newcommand{\NN}{\fld{N}}
\newcommand{\QQ}{\fld{Q}}
\newcommand{\RR}{\fld{R}}
\newcommand{\CC}{\fld{C}}
\newcommand{\HH}{\fld{H}}

\newcommand{\SLTZ}{\grp{SL\brac*{2,\ZZ}}}

\newcommand{\SLTZp}[1]{\grp{SL}\brac*{2,\ZZ_{#1}}}
\newcommand{\GL}[1]{\grp{GL}\brac*{#1,\CC}}
\newcommand{\Bthree}{\grp{B}_3}

\newcommand{\SLA}[2]{\alg{#1} \brac*{#2}}                 % Lie algebras like sl(2)
\DeclareMathOperator{\wt}{wt}                                               % conformal weight
\newcommand{\catC}{\categ{C}}           

\newcommand{\rep}[1]{\mathsf{Rep}\ \brac*{#1}}

\DeclarePairedDelimiter{\ket}{\lvert}{\rangle}
\DeclarePairedDelimiterX{\braket}[2]{\langle}{\rangle}{#1 \delimsize\vert \mathopen{} #2}
\DeclarePairedDelimiterX{\bracket}[3]{\langle}{\rangle}{#1 \delimsize\vert \mathopen{} #2 \delimsize\vert \mathopen{} #3}
\DeclareMathOperator{\tr}{tr}

\newcommand{\chmap}{\mathrm{ch}}
\newcommand{\Gr}[1]{\sqbrac[\big]{#1}}          % element of a Grothendieck group/ring
\newcommand{\ch}[1]{\chmap \Gr{#1}}             % characters
\newcommand{\modS}{\mathsf{S}}                        % modular S-matrix
\newcommand{\modT}{\mathsf{T}}                        % modular T-matrix
\newcommand{\Rmat}{\mathsf{R}}                        
\newcommand{\Fmat}{\mathsf{F}}
\newcommand{\Gmat}{\mathsf{G}}

\newcommand{\fuse}{\mathbin{\boxtimes}}                                            % fusion
\newcommand{\iop}[1]{\mathcal{Y}_{#1}}

\newcommand{\ityp}[3]{\binom{#3}{#1,#2}}
\newcommand{\ispc}[3]{\operatorname{I}\binom{#3}{#1,#2}}

\newcommand{\cft}{conformal field theory}
\newcommand{\Cft}{Conformal field theory}

\newcommand{\voa}{vertex operator algebra}

\newcommand{\cfin}[1]{\(C_{#1}\)-cofinite}
\newcommand{\ctwo}{\cfin{2}}
\newcommand{\mtc}{modular tensor category}

\renewcommand{\epsilon}{\varepsilon}
\newcommand{\vvmf}{vector-valued modular form}

\theoremstyle{plain}
\newtheorem{thm}{Theorem}[section]
\newtheorem{prop}[thm]{Proposition}
\newtheorem{lem}[thm]{Lemma}
\newtheorem{cor}[thm]{Corollary}
\newtheorem*{thm*}{Theorem}

\theoremstyle{definition} % Non-italicised text

\newtheorem{defn}[thm]{Definition}
\newtheorem*{rmk}{Remark}

\Crefname{thm}{Theorem}{Theorems}
\Crefname{prop}{Proposition}{Propositions}
\Crefname{lem}{Lemma}{Lemmas}
\Crefname{cor}{Corollary}{Corollaries}
\Crefname{defn}{Definition}{Definitions}
\Crefname{tab}{Table}{Tables}

\newcommand{\intset}{\Xi}
\newcommand{\ptf}{\psi}    %one point function
\DeclareMathOperator{\Bl}{Bl}
\newcommand{\lat}{\Lambda_\tau}
\newcommand{\Etor}{E_\tau}
\newcommand{\npf}{Z}                 % genus-one n-point function
\newcommand{\vzeta}{\vec\zeta}
\newcommand{\vu}{\vec u}
\newcommand{\vY}{\vec{\mathcal Y}}
\newcommand{\vlam}{\vec\lambda}
\newcommand{\vmu}{\vec\mu}
\newcommand{\PMod}{\mathrm{PMod}}
\newcommand{\MCG}{\mathrm{Mod}}
\newcommand{\surf}[2]{\Sigma_{#1,#2}}
\newcommand{\KSW}{KSW}
\DeclareMathOperator{\Vir}{Vir}
\newcommand{\vy}{\vec y}

\begin{document}

\title{Modular properties of affine \(\SLA{sl}{2}\) torus \(n\)-point functions}

\author{A. Zuevsky}
\address{Institute of Mathematics, Czech Academy of Sciences, Prague, Czech Republic}
\email{zuevsky@yahoo.com}

\subjclass[2020]{Primary 17B69, 11F12; Secondary 17B10, 17B67, 81T40}
\keywords{Vertex operator algebra, torus $n$-point function, vector-valued modular form, Jacobi-like form, affine Lie algebra, modular tensor category, mapping class group}

\begin{abstract}
Krauel, Shafiq and Wood realized torus \(1\)-point functions for the simple affine 
\voa{}s \(L(k,0)\) built from \(\alg{sl}(2)\) as vector-valued modular 
forms attached to a cyclic \(R\)-module structure and to the modular tensor category 
\(\rep{L(k,0)}\). We extend this to torus \(n\)-point functions, defined as traces of chains of \(n\) intertwining operators winding once around the \(\tau\)-cycle. Every coefficient of their local (Laurent, or Puiseux) expansion about a diagonal stratum is again a \vvmf{}, lying, 
whenever a mild lowest-weight hypothesis holds, in the very \(R\)-modules of \cite{KSW}. 
Granted a torus-primarity hypothesis verified explicitly below for \(\alg{sl}(2)\), the operator product expansion then reduces leading short-distance behaviour to the classified \(1\)-point theory. For \(\alg{sl}(2)\) we classify the fusion-chain conformal block spaces, identify torus \(n\)-point primary vectors, and treat \(n=2\) in detail, obtaining an explicit \((k+1)\)-dimensional family of vector-valued Jacobi-type forms generalizing the level-\(k\) forms \(\eta^{3k/2}\) of \cite{KSW}. We also extend \KSW's{} categorical picture, 
representations of \(\Bthree=\PMod(\surf{1}{1})\) built from a modular tensor category, 
to representations, on fusion-chain block spaces, of a mapping-class subgroup of the \(n\)-punctured torus generated by \(S\), \(T\), and adjacent braidings, with a Verlinde-type dimension formula and categorical \(S\)-, \(T\)-operators. The explicit \(\alg{sl}(2)\) matrix form of \(S\) 
beyond this abstract construction remains open. 
\end{abstract}

\maketitle

\tableofcontents

%%%%%%%%%%%%%%%%%%%%%%%%%%%%%%%%%%%%%%%%%%%%%%%%%%%%%%%%%%%%%%%%%%%%%%%%
\section{Introduction}

\Cft{} is expected to be well defined on Riemann surfaces of every genus, and on each surface 
the correlation functions of interest are the \(n\)-point functions obtained by 
inserting \(n\) fields at \(n\) marked points \cite{Cardy86,MSMTC1089}. On the torus,
 the case \(n=0\) - vacuum torus \(0\)-point functions, i.e., characters - 
and vacuum torus \(1\)-point functions with insertions from the \voa{} itself have 
been studied exhaustively since Zhu's foundational work \cite{Zhu}, 
culminating in the congruence property of \cite{DonLinNg15} and, for the full vacuum 
\(n\)-point functions, in Zhu's own recursive construction and 
its extension by Dong-Li-Mason \cite{DLM-orbifold} and by Mason-Tuite to 
free-field and lattice examples \cite{MasTui-torusnpt}.
 The case of a \emph{single} insertion from an \emph{arbitrary} module, 
so that the correlator is built from intertwining operators rather than 
from the \voa{} action alone, was taken up systematically only recently, 
by Krauel, Shafiq and Wood \cite{KSW} (henceforth \KSW; see also  
 \cite{BKT, K1, K2, KMar, KM1, KM2, KMi}), 
who worked out in complete detail the case of the simple affine vertex operator algebras \(L(k,0)\) built from \(\alg{sl}(2)\) at non-negative integral level \(k\). \KSW{} realize torus \(1\)-point functions as \vvmf{}s attached to a cyclic module over the ring \(R\) of modular differential operators, classify the resulting representations of \(\SLTZ\) in dimensions one through three, exhibit infinite families of non-congruence representations, and relate all of this to the modular tensor category \(\rep{L(k,0)}\) via the Bakalov-Kirillov modular functor formalism \cite{BakKir}.

The natural next step, and the subject of the present paper, is to allow \emph{several} insertions from arbitrary modules. Torus \(n\)-point functions built from chains of intertwining operators were shown by Huang \cite{Huang-IntModInv} to converge on a suitable domain, to continue analytically from there - single-valuedly on each local branch, with the monodromy attached to carrying one insertion point around another governed instead by the braiding of the corresponding intertwining operators \cite{Huang-Rigidity} - and to be modular invariant, for rational \ctwo \voa{}s, generalizing the vacuum case of Zhu and the single-intertwiner case of Miyamoto \cite{miyamoto2000intertwining} and Yamauchi \cite{Yamauchi-IntertwinerModularity}; see also Dong-Li-Mason \cite{DLM-orbifold} for the orbifold vacuum theory, Mason-Tuite \cite{MasTui-torusnpt} for detailed free-field and lattice examples, and Mason-Tuite-Zuevsky \cite{MTZ-Rgraded} for the extension to \(\RR\)-graded vertex operator superalgebras and continuous fermion orbifolds, all constructed by iterated sewing. What has not been carried out is the \KSW-style programme for these \(n\)-point functions: the identification of a tractable module structure organizing them into \vvmf{}s, an explicit affine \(\alg{sl}(2)\) computation of the resulting spaces, and a categorical formulation. This requires confronting a new feature that is invisible at \(n=1\): translation invariance on the torus removes \emph{all} dependence on the location of a single point, but for \(n\ge2\) points the \(n\)-point function depends non-trivially on the \(n-1\) independent relative positions, and the relevant mapping class group is no longer \(\Bthree=\PMod(\surf{1}{1})\) but the (typically much larger) pure mapping class group \(\PMod(\surf{1}{n})\) of the \(n\)-punctured torus, sitting in a Birman exact sequence
\[
1 \lra \pi_1\bigl(\mathrm{UConf}_{n-1}(\surf{1}{1})\bigr) \lra \PMod(\surf{1}{n}) \lra \PMod(\surf{1}{1})=\Bthree \lra 1
\]
over the surface braid group of the once-punctured torus \cite{FarMar,Birman}. 

Our main contributions are as follows. Note that  
 the results of \S\S\ref{secprelim}-\ref{seccategorical} generalize, and 
depend throughout on, the torus \(1\)-point theory of Krauel, Shafiq and Wood \cite{KSW}, 
to which the case \(n=1\) reduces. 

\begin{itemize}
\item[] In \cref{secprelim} we define genus-one \(n\)-point functions as traces of chains of \(n\) intertwining operators (\cref{defnnpf}), recall Huang's convergence and modular invariance theorem in the form we need it (\cref{thmhuang}), and show (\cref{propelliptic}) that they are, on each local branch away from the diagonal locus \(\zeta_i=\zeta_j\), meromorphic and \emph{elliptic} (doubly periodic) in each point separately, with monodromy around a collision governed by braiding rather than by any further quasi-periodicity, so that no Jacobi-form ``index'' is required. We introduce the conformal block space \(\Bl(\lambda_1,\dots,\lambda_n)\) built from chains of intertwiner types (\cref{defnblock}), generalizing \(I_\lambda\) of \KSW, and the trace map \(\Bl(\lambda_1,\dots,\lambda_n)\to C_n(\lambda_1,\dots,\lambda_n)\).

\medskip 
\item[] In \cref{secstructure} we develop the structural theory. We mention the coordinatewise vanishing criterion (\cref{propcoordwise}), which reduces the search for non-vanishing \(n\)-point data to vectors that are, at \emph{each} point separately, torus-primary in the sense of \KSW. Our main structural result, \cref{thmtaylor}, shows that every coefficient in the local expansion of \(\npf\) about a point of the (non-empty) diagonal complement (an ordinary Laurent expansion when the relevant OPE exponents happen to be integral, and in general a Puiseux-type expansion indexed by a finite exponent set \(A\subset\QQ\))
 is a \vvmf{} lying in one of the \(R\)-modules \(V(\rho_\mu)_\bullet\) constructed in \S3 of \KSW. 
The mechanism is the classical fact that an index-zero (i.e., elliptic, as opposed to quasi-periodic) meromorphic Jacobi-type form has expansion coefficients that are modular forms, with no Cohen-Kuznetsov correction, on whichever local branch the expansion is taken. Combined with the operator product expansion for intertwining operators \cite{HuaLog,Huang-Rigidity}, this lets us reduce the leading short-distance behaviour of any \(n\)-point function to the classification already carried out in \KSW{} (\cref{thmope-reduction}).

\medskip 
\item[] In \cref{secsl2} we specialize to \(L(k,0)\) and work out the two-point theory in detail: we classify the fusion-chain conformal block spaces \(\Bl(\lambda_1,\lambda_2)\) (\cref{propfusionchain}), identify the torus bi-primary vectors and their weights (\cref{thmbiprimary}), and compute explicitly the extremal rank-one family generalizing \KSW{} Theorem 5.3 (\cref{thmextremal}). The dimension count (\cref{propfusionchain}) is
  proved directly for general \(n\).

\medskip 
\item[] In \cref{secreps} we discuss the \(\SLTZ\)-representations that result, adapting the irreducibility criterion of \KSW{} to the two-point setting.

\medskip 
\item[] In \cref{seccategorical} we extend \KSW's{} categorical picture. 
We define the conformal block space 
\[\Bl(p_1,\dots,p_n)=\bigoplus_i\Hom_{\catC}(p_1\otimes\cdots\otimes p_n,\,i\otimes i^\ast)\] attached to a \mtc{} \(\catC\) with \(n\) marked points, prove a Verlinde-type dimension formula (\cref{propverlinde}), and construct the analogues of the operators \(\modS^{(p)},\modT^{(p)}\) of \KSW{} Theorem 6.2 for \(n\) points (\cref{propcatT,propcatS}), together with braiding operators realizing the surface braid group action, and work out the \(\alg{sl}(2)\) example explicitly.
\end{itemize}

Two points are worth expanding on here. First, genus-one \(n\)-point functions of the present shape, traces of a cyclic chain of intertwining operators winding once around the homology cycle dual to the \(\tau\)-cycle, have precedents for other vertex operator (super)algebras and by other (sewing-theoretic, rather than intertwining-operator) methods: see Mason-Tuite \cite{MasTui-torusnpt} for the free-field and lattice case and Mason-Tuite-Zuevsky \cite{MTZ-Rgraded} for the \(\RR\)-graded/continuous-orbifold case; the genus-two extension of the same circle of ideas, obtained by self-sewing a torus or by sewing two tori together, is developed by Tuite-Zuevsky \cite{TuiZue-Szego,TuiZue-genus2I} and Mason-Tuite \cite{MasTui-genus2free}, and \cref{secconclusion} returns briefly to this connection; intertwining operators themselves, assembled into a generalized vertex operator algebra rather than traced over, are also the subject of Tuite-Zuevsky \cite{TuiZue-Heisenberg}. Second, the \((k+1)\)-dimensional family of \cref{thmextremal} is a vector built from \(k+1\) \emph{rank-one} (multiplicity-one) fusion channels, one function per channel 
(it is not the statement that a single rank-one block space governs all of \(\alg{sl}(2)\)). 
The vector-valuedness is real and is exactly the union of these one-dimensional pieces as \(\mu_0\) ranges over \(0,\dots,k\).

We emphasize at the very beginning what this paper does \emph{not} do. 
Just as \KSW{} do not attempt a classification of \vvmf{}s of every dimension 
(their explicit classification stops at dimension four, and their 
non-congruence existence theorem, Theorem 5.6 of \emph{loc.\ cit.}, 
is restricted to prime-power-shifted levels), we do not attempt a complete 
classification of the \(n\)-point vector-valued Jacobi-type forms for every
 fusion chain and every \(n\).
  We develop the general structure rigorously and carry the affine \(\alg{sl}(2)\) computation through in complete detail for \(n=2\) and for the extremal channel at general \(n\), which already exhibits all of the qualitatively new phenomena, and indicate in \cref{rmkgeneraln-outlook} how the pattern continues.

\subsection*{Conventions} We use the notation and hypotheses of \KSW{}, 
recalled in \cref{secprelim} below: \(V=(V,Y,\wun,\omega)\) denotes a \voa{} with \(V_0=\CC\wun\), conformal weights bounded below by \(0\), self-dual (\(V\cong V^\ast\)), rational and \ctwo, with simple modules \(V=W_1,\dots,W_{d_V}\), central charge \(c\), and \(q=e(\tau)\), \(\tau\in\HH\). All unlabelled citations of the form ``\KSW{} Theorem X.Y'' refer to \cite{KSW}.

%%%%%%%%%%%%%%%%%%%%%%%%%%%%%%%%%%%%%%%%%%%%%%%%%%%%%%%%%%%%%%%%%
\section{Genus-one \(n\)-point functions}
\label{secprelim}

\subsection{Recollections}
\label{ssecrecollections}
We recall the material of \KSW{} \S2.2 that we need preserving notations. 
Let \(V\) be a \voa{} satisfying the hypotheses above, with simple modules \(W_1=V,\dots,W_{d_V}\). For \(V\)-modules \(U_1,U_2,U_3\), an intertwining operator of type \(\ityp{U_1}{U_2}{U_3}\) is a linear map
\[
Y\colon U_1\otimes U_2 \lra U_3\powser{z}[\log z],\qquad u_1\otimes u_2 \longmapsto Y(u_1,z)u_2=\sum_{\substack{s\in\CC\\ t\ge0}} (u_1)_{s,t}u_2\, z^{-s-1}\log(z)^t,
\]
satisfying truncation, the \(L_{-1}\)-derivative property, 
and the Jacobi identity of \KSW{} Definition 2.5. 
 We write \(\ispc{U_1}{U_2}{U_3}\) for the (finite-dimensional) 
space of such maps. For a homogeneous \(u\in V_{\wt(u)}\) we use the 
square-bracket vertex operators
\begin{equation}
\label{eqsquarebracket}
Y[u,z] := Y(u,e^z-1)\,e^{z\,\wt(u)} = \sum_{n\in\ZZ} u[n]\,z^{-n-1},
\end{equation}
giving \(V\) a second \voa{} structure with conformal vector 
\(\tilde\omega=\omega-\tfrac{c}{24}\wun\), square-bracket modes \(L[n]\), 
and square-bracket grading \(U=\bigoplus_n U_{[n]}\), \(U_{[n]}=\set{u\in U\st 
\exists\ m,\ (L[0]-n)^m u=0}\). For a simple \(V\)-module \(U\) 
we write \(h_U\) for its conformal weight. Recall that \(a[0]=a_0\) for \(a\in V_1\).

\begin{rmk}[\(\wt(u)\) versus the square-bracket weight.]
\label{rmkwtnotation}
 
We follow \KSW{} in writing \(\wt(u)\) for the ordinary (round-bracket) \(L(0)\)-weight of
 a homogeneous \(u\) and \(\wt[u]\) for its square-bracket \(L[0]\)-weight. 
The two gradings of \(U\) agree in weights \(0\) and \(1\) but differ from weight \(2\) on, since \(L[0]=L(0)+\sum_{k\ge1}c_kL(2k)\) for universal constants \(c_k\) (Zhu \cite{Zhu}). Throughout \S\ref{ssecrecollections}-\ref{ssecblocks} we use \(\wt(u)\) exactly where the source is a round-bracket formula quoted from \KSW{} (as in \(o^Y(u)\) just below, and in \eqref{eqsquarebracket}), and \(\wt[u]\) exactly where the object graded is the square-bracket structure itself (mode weights \(u[n]\), the weight of a \vvmf{}, and the grading of \cref{defnblock} below).
 Every vector that is actually inserted into a torus function in this paper is eventually taken to be Virasoro-quasiprimary (\(L(1)u=0\)) at the point where the distinction would matter, and for such \(u\) the two weights coincide, thus no further correction terms appear once we reach that point.
\end{rmk}

Recall the multiplier system \(\nu_r\) of weight \(r\in\RR\), determined by \(\nu_r(T)=e(r/12)\), \(\nu_r(S)=e(-r/4)\), and the spaces \(M^!(k,\rho,\nu)\supset H(k,\rho,\nu)\) of weakly holomorphic, respectively holomorphic, \(d\)-dimensional \vvmf{}s of weight \(k\) for a representation \(\rho\colon\SLTZ\to\GL{d}\) with \(\rho(T)\) diagonal and unitary and multiplier system \(\nu\), together with the ring
\[
R=\set*{\phi_0+\phi_1\partial+\cdots+\phi_n\partial^n \st \phi_i\in M,\ n\ge0}
\]
of modular differential operators acting on \(M^!(k,\rho,\nu)\) and its submodule \(H(k,\rho,\nu)\), where \(\partial=\partial_k=\tfrac{1}{2\pi i}\tfrac{d}{d\tau}+kG_2(\tau)\) raises weight by \(2\) and \(M=\CC[G_4,G_6]\) is the ring of holomorphic modular forms. All of this is recalled in complete detail in \KSW{} \S2.1-2.2, to which we refer for proofs.

For a simple module \(W\) and an intertwining operator \(Y\in\ispc{U}{W}{W}\), \KSW{} Definition 2.6 defines the torus \(1\)-point function \(\ptf^Y(u,\tau)=\tr_W o^Y(u) q^{L(0)-c/24}\), where \(o^Y(u)\) is the coefficient of \(z^{-\wt(u)}\) in \(Y(u,z)\), and shows (\KSW{} Theorem 2.9, after Zhu, Miyamoto \cite{miyamoto2000intertwining}, Yamauchi \cite{Yamauchi-IntertwinerModularity}, Huang \cite{Huang-IntModInv}) that \(\ptf^Y(u,\tau)\) is holomorphic on \(\HH\) and that the resulting action of \(\SLTZ\) on the finite-dimensional space \(C_1^u(W_\lambda)=\set{\ptf^Y(u,\tau)\st Y\in I_\lambda}\) is that of a \vvmf{} of weight \(\wt[u]\), representation \(\rho_\lambda\), and multiplier system \(\nu_{h_\lambda}\).

\subsection{Chains of intertwining operators and the genus-one \(n\)-point function}
\label{ssecnpf}

Fix \(n\ge1\). A \emph{fusion chain} of length \(n\) consists of simple modules \(U_1,\dots,U_n\) (the \emph{insertions}) and a cyclic sequence of simple modules \(W_0,W_1,\dots,W_{n-1},W_n=W_0\) (the \emph{channels}), together with a choice of intertwining operator
\[
\iop{i}\in\ispc{U_i}{W_{i-1}}{W_i},\qquad i=1,\dots,n
\]
(indices read modulo \(n\)). We write \(\vY=(\iop1,\dots,\iop n)\) for the resulting chain.

Fix \(\tau\in\HH\) and set \(\lat=\ZZ+\tau\ZZ\), \(\Etor=\CC/\lat\). For \(\zeta\in\CC\) write \(x(\zeta)=e(\zeta)=e^{2\pi i\zeta}\), so that \(x\) is invariant under \(\zeta\mapsto\zeta+1\) and satisfies \(x(\zeta+\tau)=q\,x(\zeta)\). For a homogeneous \(u\in (U_i)_{\wt(u)}\) we extend \eqref{eqsquarebracket} to intertwining operators by the same formula 
\[
\iop{i}[u,\zeta] := \iop{i}\bigl(u,\,x(\zeta)-1\bigr)\,x(\zeta)^{\wt(u)}.
\]

\begin{lem}[Conjugation and derivative formulas]
\label{lemconjugation}
For \(u\in U_i\) homogeneous of square-bracket weight \(\wt[u]\), the following hold as operators \(W_{i-1}\to W_i\):
\begin{align}
q^{L[0]}\,\iop{i}[u,\zeta]\,q^{-L[0]} &= \iop{i}[u,\zeta+\tau], 
\label{eqconj-formula}\\
\frac{\partial}{\partial\zeta}\,\iop{i}[u,\zeta] &= 2\pi i\,\iop{i}[L[-1]u,\zeta]. 
\label{eqderiv-formula}
\end{align}
\end{lem}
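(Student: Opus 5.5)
The plan is to derive both identities from the two structural axioms of intertwining operators, applied in the multiplicative coordinate \(x=x(\zeta)\):
\begin{itemize}
\item the \(L(0)\)-conjugation property \(w^{L(0)}\,\iop{i}(u,z)\,w^{-L(0)}=\iop{i}(w^{L(0)}u,wz)\), which is the integrated form of the \(L(0)\)-bracket \([L(0),\iop{i}(u,z)]=\iop{i}(L(0)u,z)+z\partial_z\iop{i}(u,z)\) coming from the Jacobi identity of \KSW{} Definition 2.5;
\item the \(L_{-1}\)-derivative property \(\partial_z\iop{i}(u,z)=\iop{i}(L(-1)u,z)\).
\end{itemize}
On the source and target modules \(W_{i-1},W_i\), I read \(q^{L[0]}\) as \(q^{L(0)-c/24}\). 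The scalar \(q^{-c/24}\) cancels in any conjugation, so only \(q^{L(0)}\) acts.

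First I must confront the normalization, which I expect to be the main obstacle. With \(z=x-1\) taken literally, the conjugation gives
\[
q^{L(0)}\,\iop{i}(u,x-1)\,q^{-L(0)}=q^{\wt(u)}\,\iop{i}\bigl(u,q(x-1)\bigr).
\]
The right-hand side is not \(\iop{i}(u,qx-1)\,q^{\wt(u)}\). So \eqref{eqconj-formula} cannot hold verbatim with the shift \(x-1\) inside the operator acting between modules.

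The version that does hold, and that is actually used in the trace \(\tr\,\iop1[\cdot]\cdots\iop n[\cdot]\,q^{L(0)-c/24}\) of \cref{defnnpf}, is Zhu's torus field \(\iop{i}(u,x)\,x^{\wt(u)}\) on modules. The square-bracket shift \(e^z-1\) belongs to the transported vertex algebra structure \(Y[\cdot,z]\) on the inserted vectors, not to the operator acting \(W_{i-1}\to W_i\). I will therefore state and prove the lemma for \(\iop{i}[u,\zeta]:=\iop{i}(u,x(\zeta))\,x(\zeta)^{\wt(u)}\), remarking that this is the intended reading.

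With that reading, \eqref{eqconj-formula} is one line. Take \(w=q\) in the conjugation property, for homogeneous \(u\), to get \(q^{L(0)}\iop{i}(u,x)q^{-L(0)}=q^{\wt(u)}\iop{i}(u,qx)\). Then multiply by \(x^{\wt(u)}\) and use \(x(\zeta+\tau)=q\,x(\zeta)\). The choice of branch of \(x^{\wt(u)}\) and of \(\log x\) has to be made compatibly along the path \(\zeta\mapsto\zeta+\tau\). For logarithmic modules the same argument applies to generalized eigenvectors, using the nilpotent part of \(L(0)\).

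For \eqref{eqderiv-formula}, I differentiate using \(dx/d\zeta=2\pi i\,x\):
\[
\partial_\zeta\bigl(\iop{i}(u,x)\,x^{\wt(u)}\bigr)=2\pi i\,\bigl(\iop{i}(L(-1)u,x)\,x+\wt(u)\,\iop{i}(u,x)\bigr)\,x^{\wt(u)}.
\]
It then remains to identify the bracket with the torus field of \(L[-1]u\). Here I use Zhu's relation \(L[-1]=L(-1)+L(0)\). For homogeneous \(u\), both \(L(-1)u\) and \(L(0)u\) are homogeneous, of weights \(\wt(u)+1\) and \(\wt(u)\) respectively. Rewriting the bracket in these terms gives
\[
\iop{i}\bigl(L(-1)u,x\bigr)\,x^{\wt(u)+1}+\iop{i}\bigl(L(0)u,x\bigr)\,x^{\wt(u)}=\iop{i}\bigl[L[-1]u,\zeta\bigr],
\]
where the right-hand side is expanded by linearity over the two homogeneous components. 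If \(u\) is only \(L[0]\)-homogeneous, I first decompose it into \(L(0)\)-homogeneous components and apply the same argument to each. \Cref{rmkwtnotation} ensures that for the quasiprimary insertions used later no discrepancy arises.
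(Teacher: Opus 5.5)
Your proposal is correct, and it takes a genuinely different route from the paper. The paper's proof contains no computation: it declares \eqref{eqconj-formula} and \eqref{eqderiv-formula} to be the \(L[0]\)-homogeneity and \(L[-1]\)-derivative covariance of the square-bracket structure, transported to intertwining operators, and cites Huang.

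Your direct check shows this cannot be right for \eqref{eqconj-formula} as literally stated. With the shift \(x(\zeta)-1\), conjugation by \(q^{L(0)}\) produces \(\iop{i}(u,q(x-1))\,(qx)^{\wt(u)}\), not \(\iop{i}(u,qx-1)\,(qx)^{\wt(u)}\). The mechanism the paper invokes does not rescue it either. In the additive coordinate \(z=2\pi i\zeta\), \(L[0]\)-homogeneity of the square-bracket field acts by the dilation \(z\mapsto qz\) (with \(u\mapsto q^{L[0]}u\)), not by the translation \(z\mapsto z+2\pi i\tau\). Likewise, the genuine square-bracket \(L[0]=L(0)+\tfrac12L(1)-\tfrac16L(2)+\cdots\) on a module is not \(L(0)-c/24\). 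So replacing \(q^{L[0]}\) by \(q^{L(0)-c/24}\) is a second correction, and you should state it as such rather than as a reading.

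Your corrected field \(\iop{i}(u,x)\,x^{\wt(u)}\) (Zhu's torus field) is the one the rest of the paper actually needs:
\begin{itemize}
\item it is what the convergence domain \(1>\abs{x(\zeta_1)}>\cdots>\abs{x(\zeta_n)}>\abs{q}\) of \cref{thmhuang}(i) refers to;
\item it is what makes the \(n=1\) reduction to \(\ptf^{\iop1}\) after \cref{defnnpf} true. With the shift, the single surviving mode \(o(u)\) gives \(\bigl(x/(x-1)\bigr)^{\wt(u)}\ptf^{\iop1}(u,\tau)\), which depends on \(\zeta\).
\end{itemize}
The coordinate \(e^{z}-1\) then reappears where it belongs, inside Zhu's associativity \(Y(e^{z_1L(0)}a,e^{z_1})\,Y(e^{z_2L(0)}b,e^{z_2})=Y\bigl(e^{z_2L(0)}Y[a,z_1-z_2]b,\,e^{z_2}\bigr)\), with \(z_j=2\pi i\zeta_j\).

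Note also that \eqref{eqderiv-formula} already holds for the paper's literal definition, because \(\tfrac{d}{d\zeta}(x-1)=2\pi i\,x\) as well. Your computation with \(L[-1]=L(-1)+L(0)\) goes through verbatim there, so only \eqref{eqconj-formula} forces the redefinition.

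Your route buys an elementary, self-contained proof from the two intertwining-operator axioms, together with a diagnosis of the misstatement. Its cost is that \cref{defnnpf} and everything built on it must be reread with the corrected fields.
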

\begin{proof}
Both are instances of the two basic covariance properties of the square-bracket \voa{} structure 
(homogeneity under \(L[0]\), and the \(L_{-1}\)-derivative property (\KSW{} equation (2.23)))
 transported from module vertex operators to intertwining operators.
 Equation \eqref{eqconj-formula} is precisely the compatibility of the \(q^{L[0]}\)-graded trace with the square-bracket structure that underlies Zhu's recursion (Zhu \cite{Zhu} for \(V\) acting on itself), and \eqref{eqderiv-formula} is the square-bracket form of the \(L(-1)\)-derivative property applied at the linear coordinate \(z=2\pi i\zeta\). Both are established in full generality, for arbitrary intertwining operators among ordinary modules of a rational \ctwo{} \voa{}, by Huang \cite{Huang-IntModInv} (whose convergence and duality theorems for chains of several intertwining operators, \cref{thmhuang} below, is the source on which \cref{lemconjugation} itself rests).
  Dong-Li-Mason \cite{DLM-orbifold} establish the analogous compatibility for the more restricted case of a single (possibly twisted) module trace, a special case we do not otherwise need. We use \eqref{eqconj-formula}-\eqref{eqderiv-formula} as tools throughout the paper, on the same footing as \eqref{eqsquarebracket} itself.
\end{proof}

\begin{defn}
\label{defnnpf}
Let \(\vY=(\iop1,\dots,\iop n)\) be a fusion chain as above and let \(u_i\in U_i\) be homogeneous, \(i=1,\dots,n\). Fix pairwise distinct points \(\zeta_1,\dots,\zeta_n\in\CC\), pairwise distinct modulo \(\lat\). The \emph{genus-one \(n\)-point function} attached to \(\vY\) is
\begin{equation}
\label{eqnpf-def}
\npf^{\vY}(u_1,\zeta_1;\dots;u_n,\zeta_n;\tau) := \tr_{W_0}\, \iop1[u_1,\zeta_1]\,\iop2[u_2,\zeta_2]\cdots \iop{n}[u_n,\zeta_n]\; q^{L[0]-c/24}.
\end{equation}
\end{defn}

We write \(\mathrm{Conf}_n(\Etor):=\set{(\zeta_1,\dots,\zeta_n)\in\Etor^n\st \zeta_i\ne\zeta_j\text{ for }i\ne j}\) for the configuration space of \(n\) distinct points on the torus. By \cref{thmhuang} and \cref{propelliptic} below, \eqref{eqnpf-def} is a meromorphic function on the appropriate local branch of \(\mathrm{Conf}_n(\Etor)\times\HH\), or equivalently on its universal cover - single-valued there, with poles only along the pullback of the collision divisors, but not asserted to be single-valued on \(\mathrm{Conf}_n(\Etor)\times\HH\) itself, since analytic continuation around a collision divisor can carry non-trivial braid monodromy (\cref{ssecellipticity}).
 We use this local-branch convention consistently for this notation from \cref{ssecellipticity} on, throughout \cref{secprelim,secstructure}.

When \(n=1\) this is exactly \(\ptf^{\iop1}(u_1,\tau)\) of \KSW{} Definition 2.6, since a single point on the torus carries no positional information: \(o^{\iop1}(u_1)\) is by definition the coefficient of \(z^{-\wt(u_1)}\) in \(\iop1(u_1,z)\), and translation invariance of \eqref{eqnpf-def} at \(n=1\) (\cref{propelliptic} below) forces the right side to be independent of \(\zeta_1\), reducing it to this single coefficient.

\emph{Terminology.} Fixing an insertion type \(\vec\lambda=(\lambda_1,\dots,\lambda_n)\), rationality gives finitely many simple modules and hence a finite basis of fusion chains \(\vY\) of that type, spanning a space \(\mathcal B_\ell(\vec\lambda)\).
 We call a representation of \(\SLTZ\) \emph{degreewise finite-dimensional} if it is a direct sum of such finite-dimensional pieces, one for each type \(\vec\lambda\), taken over all types at once (so infinite-dimensional overall, finite-dimensional on each fixed-type piece).

The following is the \(n\)-point generalization of \KSW{} Theorem 2.9; it is due to Huang.

\begin{thm}[Huang~\cite{Huang-IntModInv};
 see also Dong-Li-Mason \cite{DLM-orbifold} 
for the more restricted case of a single twisted-module trace, and Mason-Tuite \cite{MasTui-torusnpt} for explicit closed forms in the free-field and lattice cases]
\label{thmhuang}
Let \(V\) be rational and \ctwo as above, \(\vY\) a fusion chain of length \(n\), and \(u_i\in U_i\) homogeneous.
\begin{enumerate}[label=(\roman*)]
\item The series \eqref{eqnpf-def} converges absolutely and locally uniformly, to a holomorphic function, in the domain \(1 > \abs{x(\zeta_1)} > \cdots > \abs{x(\zeta_n)} > \abs{q} > 0\).
\item It continues to a meromorphic conformal block on the universal cover of \(\Etor^{\times n}\setminus\Delta\) (\(\Delta=\bigcup_{i\ne j}\set{\zeta_i=\zeta_j}\)) - single-valued there, holomorphic on \(\HH\) for fixed pairwise distinct \(\zeta_i\bmod\lat\), with poles only along the preimage of \(\Delta\) - carrying braid monodromy under continuation around a component of \(\Delta\) itself, as discussed in \cref{rmkmonodromy} below.
\item For \(\gamma=\smash{\bigl(\begin{smallmatrix}a&b\\c&d\end{smallmatrix}\bigr)}\in\SLTZ\),
\begin{equation}
\label{eqnpf-modular}
\npf^{\vY}\Bigl(u_1,\tfrac{\zeta_1}{c\tau+d};\dots;u_n,\tfrac{\zeta_n}{c\tau+d};\gamma\tau\Bigr) = \nu_{h_{W_0}}(\gamma)\,(c\tau+d)^{\sum_i \wt[u_i]}\; \npf^{\vY'}\bigl(u_1,\zeta_1;\dots;u_n,\zeta_n;\tau\bigr)
\end{equation}
for some fusion chain \(\vY'\) of the same type \((U_1,\dots,U_n)\).
 This makes \(\rho_{W_0}\), the resulting action on \(\mathcal B_\ell(\vec\lambda)\), a degreewise finite-dimensional representation of \(\SLTZ\), exactly as in \KSW{} Theorem 2.9.
\end{enumerate}
\end{thm}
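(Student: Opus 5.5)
The plan is to reduce the statement to Huang's results and not to reprove convergence from scratch, treating \cref{thmhuang} as a translation of \cite{Huang-IntModInv} into the notation of \cref{defnnpf}.

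\textbf{Step 1: change of variables.} Identify \eqref{eqnpf-def} with Huang's genus-one correlation function. Expanding \(\iop{i}[u_i,\zeta_i]=\iop{i}(u_i,x_i-1)x_i^{\wt(u_i)}\), with \(x_i=x(\zeta_i)\), rewrites \eqref{eqnpf-def} as a \(q\)-trace of products \(\iop1(\cdot,z_1)\cdots\iop n(\cdot,z_n)\) in the ordinary coordinates \(z_i=x_i-1\) (or directly in the multiplicative coordinates \(x_i\)). Here I would use Huang's convention of formal variables \(e^{2\pi i\zeta_i}\) together with \(\mathcal U(1)=(2\pi i)^{L(0)}e^{-L^+(A)}\)-type changes of coordinates. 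I would record that \(q^{L[0]}\) and \(q^{L(0)}\) differ only by the finite correction \(e^{\sum c_kL(2k)}\), which is compatible with homogeneity. Part (i) is then Huang's convergence theorem for \(q\)-traces of products of intertwining operators in the region \(1>\abs{x_1}>\cdots>\abs{x_n}>\abs q\). That theorem is proved from the \(C_2\)-cofiniteness via a system of differential equations with regular singularities in \((x_1,\dots,x_n,q)\). Absolute and locally uniform convergence follows because the solutions of such systems are convergent series.

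\textbf{Step 2: continuation.} For (ii), analytic continuation again comes from the regular-singular system. Its only singularities lie on \(x_i=x_j\), \(x_i=q^m x_j\) and \(q=0\), which are exactly the preimages of \(\Delta\). So the function continues along all paths in the complement, giving a single-valued function on the universal cover. Two ingredients then handle the remaining structure. First, \cref{lemconjugation} \eqref{eqconj-formula} together with cyclicity of the trace shows that \(\zeta_i\mapsto\zeta_i+\tau\) relates the function to the same type of trace, so the shifted lattice singularities are no worse than the diagonal ones. Second, holomorphy in \(\tau\) at fixed distinct points follows from the uniform convergence. Meromorphy along \(\Delta\) with braid monodromy is Huang's associativity/commutativity: locally near \(\zeta_i=\zeta_j\) the function is a finite sum of powers \((\zeta_i-\zeta_j)^{r}\) times holomorphic functions, since \(\log\) terms are absent for a rational \ctwo{} \(V\) with semisimple \(L(0)\).

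\textbf{Step 3: modular transformation.} For (iii), invoke Huang's modular invariance theorem: the space spanned by all such \(q\)-traces of a fixed insertion type is \(\SLTZ\)-stable, with \(\zeta_i\mapsto\zeta_i/(c\tau+d)\) and a weight factor \((c\tau+d)^{\sum\wt[u_i]}\). It suffices to check this on the generators \(S\) and \(T\). The \(T\) case is immediate: \(\tau\mapsto\tau+1\) multiplies the trace over \(W_0\) by \(e(h_{W_0}-c/24)\), which is absorbed into \(\nu_{h_{W_0}}(T)\) and a diagonal rescaling of \(\vY\). The \(S\) case is Huang's theorem proper, and finite-dimensionality of \(\mathcal B_\ell(\vec\lambda)\) follows from rationality. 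I would then define \(\vY'\) by expanding the transformed function in a basis of chains of the same type.

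\textbf{Main obstacle.} The hardest step is the bookkeeping of the multiplier. Huang's statement is projective in the sense that the \(S\)-transform of a trace over \(W_0\) is a combination of traces over all channels \(W\). One must therefore pull out \(\nu_{h_{W_0}}(\gamma)\) consistently, by absorbing the \(e(-h_W)\)-type discrepancies into \(\vY'\). One must also verify the cocycle condition, so that \(\rho_{W_0}\) is a genuine, not projective, representation. I would attempt this exactly as in the proof of \KSW{} Theorem 2.9, by twisting by the multiplier \(\nu_{h}\).
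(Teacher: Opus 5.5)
Your strategy is the paper's: the paper does not prove \cref{thmhuang} at all, but takes it as foundational input cited to Huang, just as \KSW{} do with Zhu, Miyamoto and Yamauchi at \(n=1\). What you add on top of the citation is bookkeeping, and two pieces of it would fail if carried out.

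The first is Step~1, where the arguments \(x_i-1\) and \(x_i\) are not interchangeable. Huang's domain \(1>\abs{x_1}>\cdots>\abs{x_n}>\abs{q}\) is the convergence region for intertwiners evaluated at \(x_i=x(\zeta_i)\) themselves, with \(\mathcal U(x_i)=x_i^{L(0)}\mathcal U(1)\) acting on the insertions. A product \(\iop1(\cdot,x_1-1)\cdots\iop n(\cdot,x_n-1)\) would instead need \(\abs{x_1-1}>\cdots>\abs{x_n-1}\), which fails at, e.g., \(x_1=0.9\), \(x_2=0.5\). Already at \(n=1\), even with weight \(q^{L(0)-c/24}\), the shifted argument yields \(\ptf^{Y}(u,\tau)\bigl(x(\zeta)/(x(\zeta)-1)\bigr)^{\wt(u)}\), which depends on \(\zeta\). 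Your remark on \(q^{L[0]}\) is also false. The difference \(L[0]-L(0)=\sum_{m\ge1}\frac{(-1)^{m-1}}{m(m+1)}L(m)\) includes odd modes and does not commute with \(L(0)\). So \(q^{L[0]}\) is not \(q^{L(0)}\) times a harmless factor, and a trace against it picks up cross terms pairing degree-raising components of the chain with the degree-lowering \(L(m)\). The only workable Step~1 is to \emph{read} \eqref{eqnpf-def} as Huang's function \(\tr_{W_0}\iop1(\mathcal U(x_1)u_1,x_1)\cdots\iop n(\mathcal U(x_n)u_n,x_n)\,q^{L(0)-c/24}\). Then (i) and (ii) are his results verbatim, and \eqref{eqconj-formula} holds with \(q^{L(0)}\), since \(q^{L(0)}\mathcal U(x)=\mathcal U(qx)\).

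The second is the multiplier in Step~3. As literally stated, \eqref{eqnpf-modular} holds with any non-zero scalar in place of \(\nu_{h_{W_0}}(\gamma)\), because scalars can be absorbed into \(\vY'\). All the content therefore sits in the final ``representation'' sentence, and there your resolution of the main obstacle cannot work. Put \(s:=\sum_i\wt[u_i]\). Since \(\nu_r(\gamma)(c\tau+d)^r\) is an automorphy factor, \(\nu_r(\gamma)(c\tau+d)^s\) is one exactly when \((c\tau+d)^{s-r}\) is, i.e.\ when \(s-r\in\ZZ\). But \(s\in\sum_ih_{\lambda_i}+\ZZ\) has nothing to do with \(h_{W_0}\): at level \(2\) with \(\lambda_1=\lambda_2=2\) and \(W_0=L(2,1)\), one has \(s\in\ZZ\) while \(h_{W_0}=3/16\). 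Twisting by \(\nu_{h_{W_0}}\) therefore leaves \(\rho\) non-multiplicative. Moreover, the \(S\)-transform of a \(W_0\)-trace involves other channels, so no channel-dependent multiplier can be factored out of a single operator on \(\Bl(\vlam)\). The twist that works is \(\nu_{\sum_ih_{\lambda_i}}\). It is the genuine analogue of \KSW's{} \(\nu_{h_\lambda}\) at \(n=1\), which is indexed by the insertion \(\lambda\), not the channel. Finally, expanding the transformed function in a basis of chains determines \(\vY'\) only modulo \(K_{\vlam}=\ker\tr_{\vlam}\). What you obtain is thus an action on \(C_n(\vlam)\cong\Bl(\vlam)/K_{\vlam}\) rather than on the chain space. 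This is also how the paper ends up using it (the Remark ``Chains versus functions'' after \cref{thmhuang}, and the proof of \cref{thmtaylor}).
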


\begin{rmk}[Monodromy versus continuation]
\label{rmkmonodromy}
Part (ii) is \emph{weaker} than single-valued meromorphic continuation to \(\Etor^{\times n}\setminus\Delta\) itself: continuation around a loop winding once about a component of \(\Delta\) instead acts by the monodromy attached to the braiding of the two intertwining operators being carried around each other \cite{Huang-Rigidity}. Integrality of the corresponding OPE exponent of \cref{thmope-reduction} is a natural \emph{sufficient} condition for the elementary local power \((z-w)^{\text{exponent}}\) itself to be single-valued around the loop, but triviality of the \emph{full} monodromy of a fusion-channel conformal block can also depend on the braiding matrix acting on the (possibly several-dimensional) space of channels, thus nonintegral exponents generically produce non-trivial local monodromy while integral exponents alone do not automatically rule it out. Convergence in a specified domain, analytic continuation to other domains, possibly-multivalued continuation around \(\Delta\), and single-valued meromorphic continuation to all of \(\Etor^{\times n}\setminus\Delta\) are four logically distinct statements, of which \cref{thmhuang} furnishes the first three but not, in general, the fourth. The pole order at each component of \(\Delta\), read off on whichever local branch, is controlled by the fusion rules of \(V\) through the same operator product expansion; we do not need, and do not state, the sharp bound on it here.
\end{rmk}

\begin{rmk}[Chains versus functions]
\label{rmkchains-vs-functions}
The representation \(\rho_{W_0}\) of \cref{thmhuang}(iii) acts on the \emph{abstract} space of chains \(\mathcal B_\ell(\vec\lambda)\), whereas the map from a chain \(\vY\) to the actual function \(\npf^{\vY}\) it labels (the trace map of \cref{defnblock} below) need not be injective. We show in the proof of \cref{thmtaylor} that \(\rho_{W_0}\) is nonetheless well defined on the space of functions themselves (the kernel of the trace map is representation-invariant), which is the sense in which it should ultimately be understood, and the sense we use throughout \cref{secstructure} onward.
\end{rmk}

We will not reprove \cref{thmhuang}. As in \KSW{} (where the analogous \(n=1\) statement, their Theorem 2.9, is cited to Zhu, Miyamoto and Yamauchi rather than reproved), we take it as the foundational input on which the rest of the paper is built. What \emph{is} new, and what we develop in \cref{secstructure,secsl2,secreps,seccategorical}, is the identification of a tractable module structure organizing the functions \eqref{eqnpf-def} into \vvmf{}s, together with the explicit affine \(\alg{sl}(2)\) computation and the categorical formulation.

\subsection{Ellipticity}
\label{ssecellipticity}

A first structural difference from the \(n=1\) theory is that for \(n\ge2\) the function \eqref{eqnpf-def} depends on the points, but it does so \emph{elliptically}: unlike a Jacobi form of non-zero index, it has no quasi-periodicity, only ordinary double periodicity, with all of the non-trivial monodromy concentrated on the diagonal \(\Delta\).

\begin{prop}
\label{propelliptic}
Fix a fusion chain \(\vY\) of length \(n\) and homogeneous \(u_i\in U_i\). On the local branch of \cref{thmhuang} fixed by taking \(\zeta_i\mapsto\zeta_i+\ell+m\tau\) along a path avoiding \(\Delta\), for each \(i\), each \(\ell,m\in\ZZ\), and \(\zeta_1,\dots,\zeta_n\) pairwise distinct modulo \(\lat\),
\[
\npf^{\vY}(u_1,\zeta_1;\dots;u_i,\zeta_i+\ell+m\tau;\dots;u_n,\zeta_n;\tau) = \npf^{\vY}(u_1,\zeta_1;\dots;u_i,\zeta_i;\dots;u_n,\zeta_n;\tau).
\]
\end{prop}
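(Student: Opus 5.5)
We treat the two generators of \(\lat\) separately; the general shift \(\ell+m\tau\) then follows by composing the two, provided each single shift is carried out along a path avoiding \(\Delta\), as the statement stipulates.

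\textbf{The period \(1\).} Here the argument is formal. By definition
\(\iop{i}[u_i,\zeta]=\iop{i}\bigl(u_i,x(\zeta)-1\bigr)\,x(\zeta)^{\wt(u_i)}\).
This depends on \(\zeta\) only through \(x(\zeta)=e(\zeta)\), once we fix the branch of the logarithm \(\log(x(\zeta)-1)\) and of the power \(x(\zeta)^{\wt(u_i)}\) by continuation in \(\zeta\).

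We therefore first verify the claim in the convergence domain of \cref{thmhuang}(i). There we take the path \(\zeta_i\mapsto\zeta_i+t\), \(t\in[0,1]\). It keeps all \(\abs{x(\zeta_j)}\) fixed, so it stays inside the domain \(1>\abs{x(\zeta_1)}>\cdots>\abs{q}\) and avoids \(\Delta\).

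Along this path the factors \(x^{\wt(u_i)}\) (and any \(\log x\) terms) pick up a phase \(e(\wt(u_i))\). We must check that this phase is compensated. Expanding \(\iop{i}(u_i,x-1)\) in powers of \(x\) on the region \(\abs{x}<1\), the combination \(\iop{i}(u_i,x-1)x^{\wt(u_i)}\) is a power series in \(x\) with exponents in \(\wt(u_i)+h_{W_i}-h_{W_{i-1}}+\ZZ\). Only the total exponent matters, because the trace is taken over \(W_0\) and closes the cycle. Summing over the whole chain, the exponents \(h_{W_i}-h_{W_{i-1}}\) telescope to zero.

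The net effect is that the integrand is single-valued under \(\zeta_i\mapsto\zeta_i+1\). In this we use the \(L[0]\)-homogeneity of \cref{lemconjugation}, i.e.\ \(x^{L[0]}\)-conjugation, to move the phase through the trace. Uniqueness of analytic continuation then extends the identity to the whole local branch.

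\textbf{The period \(\tau\).} Here we use \eqref{eqconj-formula}, in the form
\(\iop{i}[u_i,\zeta_i+\tau]=q^{L[0]}\iop{i}[u_i,\zeta_i]q^{-L[0]}\).
Cyclicity of the trace moves \(q^{L[0]}\) around the chain. The shifted point is first brought to the position it would occupy in the ordering of \cref{thmhuang}(i), using the commutativity/associativity of intertwining operators along the chosen path; the path avoids \(\Delta\), which is exactly the condition under which this reordering is legitimate without braid monodromy. The resulting factor \(q^{L[0]}\) is then absorbed into \(q^{L[0]-c/24}\). The conjugating factors \(q^{\pm L[0]}\) cancel around the cycle, once we account for the \(c/24\) shift being a scalar.

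Concretely: in the ordered domain, shifting the last point \(\zeta_n\mapsto\zeta_n+\tau\) moves \(x(\zeta_n)\) to \(qx(\zeta_n)\). This leaves the ordered region. Instead we write the shifted operator as \(q^{L[0]}\iop n[u_n,\zeta_n]q^{-L[0]}\), cycle \(q^{L[0]}\) to the front, and recover \(\tr_{W_0}\) with the factor \(q^{L[0]}\) recombined. This reproduces the same trace.

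\textbf{Main obstacle.} The hard step is the branch bookkeeping in the \(\tau\)-shift. The intermediate configuration leaves the convergence domain. The identity must therefore be established as an equality of analytic continuations along the specified path, and the continuation must not wind around \(\Delta\) in a way that introduces braiding matrices.

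We would handle this by doing the \(\tau\)-shift one adjacent transposition at a time, in the formal-variable sense. The plan is to reduce to the case where \(\zeta_i\) is the point with the smallest \(\abs{x}\): first cyclically relabel the chain via trace cyclicity, which changes \(W_0\) but not the function. The shift then becomes a pure conjugation identity, and the path avoids \(\Delta\) by construction.
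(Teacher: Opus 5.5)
Both halves of your argument have a genuine gap, and neither is mere bookkeeping.

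\textbf{The period $1$.} Closing the trace over $W_0$ constrains only the \emph{total} exponent $\sum_j e_j$ of a monomial $\prod_j x(\zeta_j)^{e_j}$. That total is what the telescoping $\sum_i(h_{W_i}-h_{W_{i-1}})=0$ controls. It is also the phase picked up when \emph{all} the $\zeta_j$ move by $1$ together, so it gives nothing beyond \cref{proptranslation}. Moving $\zeta_i$ alone multiplies every term by $e(e_i)$, where $e_i$ is the exponent of $x(\zeta_i)$ by itself. Its class modulo $\ZZ$ is fixed and does not telescope.
\begin{itemize}
\item In Zhu's normalization $\iop{i}\bigl(x^{L(0)}u_i,x\bigr)$ this class is $h_{W_i}-h_{W_{i-1}}$. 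Equivalently, $e^{2\pi iL(0)}$ acts on the simple module $W_j$ as the scalar $e(h_{W_j})$, so conjugating by it creates this phase rather than moving it away.
\item With the literal definition via $x(\zeta)-1$, the factor $\iop{i}(u_i,x(\zeta_i)-1)$ is single-valued for $\abs{x(\zeta_i)}<1$, and the phase is simply $e(\wt(u_i))$.
\end{itemize}
Your own exponent set would in any case leave $\sum_i\wt(u_i)$ uncancelled. Neither phase is $1$ in general. For the extremal chain of \cref{thmextremal} at $k=2$, $\mu_0=0$, one has $h_2-h_0=\tfrac12$ and $\wt(u_1)=\tfrac32$. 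So that component is anti-periodic under $\zeta_1\mapsto\zeta_1+1$.

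\textbf{The period $\tau$.} This step is exactly the conjugate-and-cycle argument the paper examines and discards. After \eqref{eqconj-formula}, the $q^{-L[0]}$ cancels the trailing $q^{L[0]}$. The surviving $q^{L[0]}$ can reach the front only by conjugating $\iop{n-1},\dots,\iop1$, which shifts each of those coordinates by $-\tau$. The outcome is \eqref{eqcompensating-shift}, i.e.\ translation invariance again, not ``the same trace''.

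Cyclic relabelling does not rescue this. It is itself only the identity $\npf^{\vY}(\zeta_1,\dots,\zeta_n)=\npf^{\vY^{\mathrm{rot}}}(\zeta_n,\zeta_1+\tau,\dots,\zeta_{n-1}+\tau)$ for the rotated chain, so it already consumes a $\tau$-shift. Nor does avoiding $\Delta$ permit reordering ``without braid monodromy''. A $\tau$-shift drags $\zeta_i$ past every other point in the radial ordering. Re-expanding in the new ordering is commutativity of intertwining operators, i.e.\ the braiding, which replaces $\vY$ by a combination of other chains. Avoiding $\Delta$ only decides which braiding occurs.

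\textbf{A counterexample in Zhu's normalization.} Realize $L(2,0)\oplus L(2,2)$ as the NS sector of three free fermions, with $u_1\propto u_2\propto\psi^1_{-1/2}\psi^2_{-1/2}\psi^3_{-1/2}\wun$.
\begin{itemize}
\item Up to normalization of the intertwiners, the $\mu_0=0$ and $\mu_0=2$ components are $A+B$ and $A-B$.
\item Here $A$ and $B$ are proportional, with $\tau$-dependent constants, to $(\theta_3/\theta_1)^3$ and $(\theta_4/\theta_1)^3$ evaluated at $\zeta_1-\zeta_2$.
\item The shift $\zeta_1\mapsto\zeta_1+\tau$ sends $A\mapsto-A$ and $B\mapsto B$. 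It therefore exchanges the two components up to scalars instead of fixing them.
\end{itemize}

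\textbf{Relation to the paper's route.} The paper avoids your specific error by isolating $\iop i$ via cyclicity. It then only cites single-intertwiner periodicity, which this example shows is not available when adjacent channels differ. What one can actually expect to prove is:
\begin{itemize}
\item periodicity up to the phase $e(h_{W_i}-h_{W_{i-1}})$ in the $1$-direction;
\item in the $\tau$-direction, a braiding (point-pushing) monodromy acting linearly on the space of chains $\Bl(\vlam)$.
\end{itemize}
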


\begin{proof}
Periodicity in \(\zeta_i\mapsto\zeta_i+1\) is immediate from the definition: \(\iop{i}[u_i,\zeta_i]\) depends on \(\zeta_i\) only through \(x(\zeta_i)\), and \(x(\zeta_i+1)=x(\zeta_i)\).

For \(\zeta_i\mapsto\zeta_i+\tau\), we first mention, and then discard, 
 a naive approach, since its failure is instructive. Substituting the identity \eqref{eqconj-formula} for the single factor at \(\zeta_i\) and attempting to carry the resulting pair of \(q^{\pm L[0]}\) factors around the remaining loop, by conjugating each spectator \(\iop j[u_j,\zeta_j]\) (\(j\ne i\)) past them in turn, does \emph{not} return the left side of \eqref{eqnpf-def} unchanged. 
Carrying \(q^{-L[0]}\) forward through \(\iop{i+1},\dots,\iop n\), 
the trailing \(q^{L[0]-c/24}\), and on through \(\iop1,\dots,\iop{i-1}\) shifts \emph{every} 
spectator coordinate \(\zeta_j\) (\(j\ne i\)) by \(-\tau\) in turn, module by module, 
exactly as \eqref{eqconj-formula} prescribes at each step.
 Carried all the way around, this produces 
\begin{equation}
\label{eqcompensating-shift}
\npf^{\vY}(\dots;u_i,\zeta_i+\tau;\dots) = \npf^{\vY}(u_1,\zeta_1-\tau;\dots;u_i,\zeta_i;\dots;u_n,\zeta_n-\tau;\tau)
\end{equation}
(every coordinate \emph{but} \(\zeta_i\) shifted by \(-\tau\)), 
 not the bare single-coordinate statement of \cref{propelliptic}. 
This is an unavoidable feature of pushing one conjugation all the way around a 
closed cyclic product, factor by factor: 
\eqref{eqcompensating-shift} is in fact no more than the translation
 invariance established independently in \cref{proptranslation} below.
 Writing everything in the coordinates \(x_j=\zeta_j-\zeta_n\) of \cref{ssectranslation}, 
both sides of \eqref{eqcompensating-shift} become the identical expression 
\(F(x_1,\dots,x_i+\tau,\dots,x_{n-1};\tau)\), 
 for \(i<n\),  shifting \(\zeta_n\) itself shifts every \(x_j\) at once, 
again matching translation invariance). 
 Thus \eqref{eqcompensating-shift} carries no content beyond \cref{proptranslation} and cannot, by itself, be upgraded to single-coordinate periodicity. \Cref{propelliptic} needs a different argument, which we now give.

The needed argument isolates \(\iop i\) as the \emph{only} factor depending on \(\zeta_i\), rather than conjugating it past its neighbours. By cyclicity of the trace,
\[
\npf^{\vY}(u_1,\zeta_1;\dots;u_n,\zeta_n;\tau) = \tr_{W_{i-1}}\bigl(\iop i[u_i,\zeta_i]\cdot\Phi_i\bigr),
\]
\[
\Phi_i := \iop{i+1}[u_{i+1},\zeta_{i+1}]\cdots\iop n[u_n,\zeta_n]\;q^{L[0]-c/24}\;\iop1[u_1,\zeta_1]\cdots\iop{i-1}[u_{i-1},\zeta_{i-1}]\ \colon\ W_i\to W_{i-1},
\]
where \(\Phi_i\) is built from \(\tau\) and from every \(\zeta_j\), \(j\ne i\). 
 But, this being the entire point of the rearrangement, \emph{not from \(\zeta_i\)} at all, since \(\iop i[u_i,\zeta_i]\) has been extracted whole and \(\Phi_i\) is a fixed linear map once \(\tau\) and the other \(\zeta_j\) are held fixed. The quantity \(\tr_{W_{i-1}}(\iop i[u_i,\zeta_i]\cdot\Phi_i)\) is now exactly of the shape treated by Huang's theorem (\cref{thmhuang}) itself: a single intertwining operator, varying in one complex coordinate, composed with an auxiliary operator built from the ambient duality/associativity structure of chains of intertwiners for a rational \ctwo{} \(V\) 
 (the same structure already invoked for the modular transformation law \eqref{eqnpf-modular})
  rather than a bare product requiring termwise conjugation. 
\(\lat\)-periodicity of exactly this configuration, in the one free coordinate,
 with arbitrary fixed auxiliary data on either side, is the direct multi-module 
generalization of Zhu's original 
single vertex operator argument 
(as already extended to a single intertwining operator 
among otherwise fixed data by Miyamoto \cite{Miy-CFTauto},
 and to arbitrary chains of intertwiners by Huang \cite{Huang-IntModInv}). 
It is this fact, applied to \(\iop i\) alone against the fixed \(\Phi_i\),
 that gives \cref{propelliptic}, with every \(\zeta_j\) (\(j\ne i\)) held fixed throughout, 
in contrast to \eqref{eqcompensating-shift}, where none of them could be.
\end{proof}

\begin{rmk}
Carrying \(q^{\pm L[0]}\) around the full cyclic loop does \emph{not} return
 exactly the left side of \eqref{eqnpf-def}: as \eqref{eqcompensating-shift}
 shows explicitly, it produces translation invariance instead.  
   The fix to isolate the one moving factor by cyclicity first, 
and invoke single-intertwiner periodicity only once, at the end, rather than conjugating past every spectator in turn.
 The recent result of Gao-Liu \cite{GaoLiu-genusone}, 
Theorem A (that the trace functions \(\npf^{\vY}\) attached to a basis of fusion 
chains \(\vY\) of fixed type form a global frame of a vector bundle 
\emph{over \(\HH\)} (the conformal block bundle), on which the natural \(\SLTZ\)-action of 
\cref{thmhuang} is represented in that frame)  
is a related but logically separate fact about the \(\tau\)-direction structure
 at a \emph{fixed} configuration of the \(\zeta_i\). But it is not needed for 
the \(\zeta_i\)-periodicity just proved.  
 Theorem A there shows the trace functions attached to a basis of 
fusion chains form a global frame of the conformal block bundle over
 \(\HH\) on which the \(\SLTZ\)-action (and, a fortiori, 
the \(\lat\)-periodicity of \cref{propelliptic} in each insertion point,
 which is the infinitesimal/discrete-translation part of the same bundle 
structure) is represented exactly as stated, refining Huang's 
\cite{Huang-IntModInv} and Miyamoto's \cite{Miy-CFTauto} original 
modular-invariance theorems to an explicit global-frame statement.
\end{rmk}

In particular \(\npf^{\vY}\) is, for fixed \(\tau\) and on every chosen local branch 
(\cref{thmhuang}), doubly periodic in each insertion coordinate \(\zeta_i\) separately, 
in the sense classical for the Weierstrass \(\wp\)-function and its derivatives. 
The possible monodromy under analytic continuation of \(\zeta_i\)
 around a collision divisor is governed instead by the 
braiding of the corresponding intertwining operators, 
and is a separate question from the periodicity just proved. 
We state this periodicity precisely: it is an identity between the values
of \(\npf^{\vY}\) on the chosen analytic branch, along the specific translation path 
\(\zeta_i\mapsto\zeta_i+\tau\) taken directly 
(without winding around any collision divisor, so that no braiding is encountered).
 On that branch, and along that path, the identity holds
 with no further correction. \cref{propelliptic} is the reason 
we do not need to introduce a Jacobi-form ``index'' anywhere below. 
Compare the vanishing-index case of the Eichler-Zagier theory of Jacobi forms 
\cite{EichZag}, to which we return in \cref{ssectaylor},
where we also make precise the sense (ordinary Laurent, or in general Puiseux) 
 in which the local expansion at a collision divisor is to be understood.

\subsection{Conformal block spaces and the trace map}
\label{ssecblocks}

Fix insertion labels \(\lambda_1,\dots,\lambda_n\in\set{1,\dots,d_V}\). Generalizing \KSW's{} \(I_\lambda=\bigoplus_\mu\ityp{W_\lambda}{W_\mu}{W_\mu}\) (their equation (2.35)), define
\begin{equation}
\label{eqblock-def}
\Bl(\lambda_1,\dots,\lambda_n) := \bigoplus_{\mu_0,\mu_1,\dots,\mu_{n-1}=1}^{d_V} \ispc{W_{\lambda_1}}{W_{\mu_0}}{W_{\mu_1}} \otimes \ispc{W_{\lambda_2}}{W_{\mu_1}}{W_{\mu_2}} \otimes\cdots\otimes \ispc{W_{\lambda_n}}{W_{\mu_{n-1}}}{W_{\mu_0}},
\end{equation}
the space of all fusion chains of insertion type \((\lambda_1,\dots,\lambda_n)\). 
At \(n=1\) this recovers \(I_{\lambda_1}\). We call the summand indexed by \((\mu_0,\dots,\mu_{n-1})\) the \emph{sector} \(\Bl_{\vmu}(\vlam)\).

\begin{defn}
\label{defnblock}
Given \(u_i\in W_{\lambda_i}\), the \emph{space of \(n\)-point functions evaluated at \((u_1,\dots,u_n)\)}, and the (unevaluated) \emph{space of \(n\)-point functions}, are respectively
\[
C_n^{\vu}(\vlam) := \set*{\npf^{\vY}(u_1,\zeta_1;\dots;u_n,\zeta_n;\tau)\st \vY\in\Bl(\vlam)},\qquad C_n(\vlam) := \bigcup_{\vu} C_n^{\vu}(\vlam),
\]
where \(C_n(\vlam)\) is understood as spanned by the images of \(\Bl(\vlam)\) under evaluation at all \(\vu\) simultaneously, exactly as in \KSW{} equation (2.36)-(2.37).
\end{defn}

As in the \(n=1\) case, taking traces and evaluating at \(\vu\) defines linear 
(surjective, but not a priori injective) maps
\[
\tr_{\vlam}\colon \Bl(\vlam)\lra C_n(\vlam),\qquad \mathrm{ev}_{\vu}\colon C_n(\vlam)\lra C_n^{\vu}(\vlam),
\]
generalizing \KSW{} equation (2.42).
 We write \(K_{\vlam}:=\ker(\tr_{\vlam})\subset\Bl(\vlam)\) for the kernel of 
the trace map, so that \(C_n(\vlam)\cong\Bl(\vlam)/K_{\vlam}\) as vector spaces. 
Once the relevant \(\SLTZ\)-action on \(\Bl(\vlam)\) is constructed in \cref{thmhuang} below, 
we will show in the proof of \cref{thmtaylor} that \(K_{\vlam}\) is invariant under it, 
so that the modular representation on chains descends to, 
and should ultimately be understood as acting on, the quotient \(C_n(\vlam)\), 
a fact we do not yet have available at this point in the exposition and so do not use until
 then. The dimension bounds
\[
\dim C_n^{\vu}(\vlam) \le \dim C_n(\vlam) \le \dim\Bl(\vlam)
\]
hold for the same reason as in the \(n=1\) case, and either inequality can be strict.  
 Already at \(n=1\), \KSW{} exhibit an example 
(the vertex operator algebra for \(\alg{sl}(3)\) at level \(3\), 
with \(\lambda\) the vacuum label) 
where \(\dim I_\lambda=10\) while \(\dim C_1^{\wun}(V)=6\).  
The same phenomenon persists for \(n\ge2\), since \(\Bl(\vlam)\) 
grows combinatorially with the number of admissible sectors 
\((\mu_0,\dots,\mu_{n-1})\) while a given evaluation vector 
\(\vu\) may kill an arbitrarily large sub-space. We do not know a general criterion for the kernel of \(\tr_{\vlam}\) to vanish beyond the sufficient condition of \KSW{} (the existence of a single \(\vu\) for which \(\mathrm{ev}_{\vu}\circ\tr_{\vlam}\) is injective on a basis), which we verify directly in the affine \(\alg{sl}(2)\) example of \cref{secsl2}.

%%%%%%%%%%%%%%%%%%%%%%%%%%%%%%%%%%%%%%%%%%%%%%%%%%%%%%%%%%%%%%%%%
\section{General structural theory}
\label{secstructure}

Throughout this section \(\vY=(\iop1,\dots,\iop n)\) is a fusion chain with channels 
\(W_0,\dots,W_{n-1}\) and insertions \(U_1,\dots,U_n\), 
 and we abbreviate \(\npf^{\vY}(\vu,\vzeta;\tau):=\npf^{\vY}(u_1,\zeta_1;\dots;u_n,\zeta_n;\tau)\).

\subsection{Translation invariance}
\label{ssectranslation}

\begin{prop}
\label{proptranslation}
For every \(c\in\CC\),
\[
\npf^{\vY}(u_1,\zeta_1+c;\dots;u_n,\zeta_n+c;\tau) 
= \npf^{\vY}(u_1,\zeta_1;\dots;u_n,\zeta_n;\tau). 
\]
Consequently \(\npf^{\vY}\) depends on \((\zeta_1,\dots,\zeta_n)\) only through the 
\(n-1\) differences \(\zeta_i-\zeta_n\), \(i=1,\dots,n-1\).
 At \(n=1\) this recovers the well-known fact, used implicitly throughout \KSW, that the torus \(1\)-point function does not depend on the location of the marked point.
\end{prop}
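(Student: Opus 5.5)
The plan is to prove the statement first for \(c\) real and small, then extend by analyticity. In the square-bracket formalism a simultaneous shift \(\zeta_j\mapsto\zeta_j+c\) multiplies every \(x(\zeta_j)\) by \(e(c)\). So the natural tool is conjugation by \(e(c)^{L(0)}\), the ordinary round-bracket weight operator, rather than \(q^{L[0]}\).

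\textbf{Scaling identity.} The first step is the \(L(0)\)-homogeneity (scaling) property of intertwining operators. For \(Y\in\ispc{U_i}{W_{i-1}}{W_i}\) and homogeneous \(u\),
\[
a^{L(0)}\,\iop{i}(u,z)\,a^{-L(0)}=a^{\wt(u)}\,\iop{i}(u,az),
\]
which follows from the commutator \(\comm{L(0)}{\iop{i}(u,z)}=\iop{i}(L(0)u,z)+z\partial_z\iop{i}(u,z)\). That commutator is a standard consequence of the Jacobi identity of \KSW{} Definition 2.5. In the logarithmic case one works with the semisimple part of \(L(0)\); for the simple modules here \(L(0)\) is semisimple, so there is no issue.

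Applying this with \(a=x(c)\) to \(\iop{i}[u,\zeta]=\iop{i}(u,x(\zeta)-1)x(\zeta)^{\wt(u)}\) does not directly give \(\iop{i}[u,\zeta+c]\), because of the \(-1\). I therefore plan to work in the original round-bracket (\(x\)-plane) form of the trace. This is the form in which Huang's convergence in \cref{thmhuang}(i) is actually stated: the genus-one correlator is \(\tr\,\iop1(\mathcal U(x_1)u_1,x_1)\cdots q^{L(0)-c/24}\), with \(x_j=x(\zeta_j)\) and \(\mathcal U\) the change of coordinate to the \(\zeta\)-plane. I will identify this with \eqref{eqnpf-def}; the shift by \(-1\) is the coordinate change built into the square-bracket structure, and the identification is the same one used in \cref{lemconjugation}. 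In that form the scaling identity gives
\[
e(c)^{L(0)}\,\iop{i}(\mathcal U(x_i)u_i,x_i)\,e(c)^{-L(0)}=\iop{i}(\mathcal U(e(c)x_i)u_i,e(c)x_i),
\]
where the prefactors \(e(c)^{\wt}\) are absorbed because \(\mathcal U(x)=x^{L(0)}e^{(\dots)L_+}\).

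\textbf{Insertion into the trace.} Next I insert \(e(c)^{-L(0)}e(c)^{L(0)}\) between consecutive factors of the product. Since \(e(c)^{L(0)}\) commutes with \(q^{L(0)-c/24}\), cyclicity of \(\tr_{W_0}\) cancels the outer pair. For \(\abs{e(c)}=1\), i.e.\ \(c\) real, the convergence domain \(1>\abs{x_1}>\cdots>\abs{x_n}>\abs{q}\) is preserved. Cyclicity is then legitimate because each graded piece of \(W_0\) is finite-dimensional and the series converges absolutely there by \cref{thmhuang}(i). This yields the identity for real \(c\) on that domain.

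\textbf{Analytic extension.} Both sides are holomorphic in \(c\) on the local branch of \cref{thmhuang}(ii), so the identity theorem extends it to all complex \(c\) along paths avoiding \(\Delta\). The consequence is then immediate: choose \(c=-\zeta_n\) to see that \(\npf^{\vY}\) depends only on the differences \(\zeta_i-\zeta_n\). At \(n=1\) this shows there is no dependence on \(\zeta_1\).

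The main obstacle I expect is the bookkeeping of the coordinate change between the square-bracket and round-bracket pictures, i.e.\ justifying that \(\iop{i}[u,\zeta]\) transforms under \(e(c)^{L(0)}\) by a pure shift of \(\zeta\). I would first try to verify this directly for a quasiprimary \(u\) (\(L(1)u=0\)), where \(\mathcal U(x)u=x^{\wt(u)}u\) and the computation is transparent. I would then pass to general \(u\) using \(L[-1]\)-descendants and \eqref{eqderiv-formula}, since every vector is a combination of \(L[-1]\)-derivatives of quasiprimaries modulo the relevant relations.
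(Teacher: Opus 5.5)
Your argument is correct, but it takes a different route from the paper's. The paper argues infinitesimally. It differentiates \eqref{eqnpf-def} at $c=0$ using \eqref{eqderiv-formula}, which reduces the claim to $\sum_i\npf^{\vY}(\dots;L[-1]u_i,\zeta_i;\dots)=0$. It then proves this conformal Ward identity by inserting $Y_{W_i}(\tilde\omega,\eta)$ into the trace and gluing the pieces into a function $H(\eta)$ whose residue at $\zeta_i$ is the $i$-th summand. The residue theorem on a fundamental parallelogram finishes the proof, in parallel with \cref{lemsamepoint}.

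You instead prove the finite identity directly, by the standard device of conjugating with $e(c)^{L(0)}$ and cycling it through the graded trace. This is in fact the mechanism the paper itself runs, with $q$ in place of $e(c)$, to obtain \eqref{eqcompensating-shift} in the proof of \cref{propelliptic}. Your route needs only three things: $L(0)$-covariance of intertwining operators, the fact that $x^{L(0)}$ is the only $x$-dependent factor of $\mathcal U(x)$ and stands leftmost, and degree-by-degree cyclicity for a diagonal operator commuting with $q^{L(0)}$. It uses no OPE with $\tilde\omega$ and no periodicity in any variable.

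The paper's route, by contrast, needs double periodicity of $H(\eta)$ as an input, which it borrows from the argument of \cref{propelliptic}. That input is not cheap. For a stress-tensor insertion, the only obstruction to periodicity is the quasi-periodic simple-pole (Weierstrass zeta-type) kernel, whose jump is proportional to exactly the sum being shown to vanish. So periodicity must be supplied separately, by a locality-plus-conjugation argument for $Y(\tilde\omega,\eta)$ of the same kind as yours. What the paper gains in exchange is the auxiliary function $H(\eta)$ and a Ward-identity framework shared with \cref{lemsamepoint} and \cref{propnpoint-Lemma32}.

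Three details of your write-up can be simplified or sharpened.
\begin{enumerate}
\item The fallback through quasiprimaries and $L[-1]$-descendants is unnecessary. The conjugation works for every homogeneous $u$ at once, once branches are fixed by $x^{L(0)}:=e^{2\pi i\zeta L(0)}$. Quasiprimarity would not reduce Huang's $\mathcal U(x)u$ to a multiple of $u$ anyway, since his $x$-independent factor also involves $L(j)$ for $j\ge2$.
\item The restriction to real $c$ is not needed. Any $c$ keeping the translated configuration inside the domain of \cref{thmhuang}(i) will do. Continuation to all $c$ is harmless, because a simultaneous translation never meets $\Delta$.
\item Your identification of \eqref{eqnpf-def} with Huang's round-bracket trace is a reinterpretation rather than a derivation. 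With the literal $\iop{i}(u,x(\zeta)-1)x(\zeta)^{\wt(u)}$, the conjugation yields $\iop{i}(u,e(c)x(\zeta)-e(c))$ rather than $\iop{i}(u,e(c)x(\zeta)-1)$. The reinterpretation is nevertheless legitimate, since the convergence domain of \cref{thmhuang}(i) and \cref{lemconjugation} already presuppose it.
\end{enumerate}
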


\begin{proof}
As throughout \cref{secstructure}, the argument is carried out first in the domain 
of absolute convergence of \cref{thmhuang}(i) and then extended to all of \(\HH\) 
and to every chamber of \(\Etor^{\times n}\setminus\Delta\) 
by the analytic continuation accordinding to
 \cref{thmhuang}(ii). 
We do not repeat this remark at each subsequent proof. By \eqref{eqderiv-formula} of \cref{lemconjugation}, differentiating \eqref{eqnpf-def} in \(c\) at \(c=0\) gives
\[
\frac{\partial}{\partial c}\Big|_{c=0}\npf^{\vY}(\vu,\vzeta+c;\tau) = 2\pi i\sum_{i=1}^n \tr_{W_0}\, \iop1[u_1,\zeta_1]\cdots \iop{i}[L[-1]u_i,\zeta_i]\cdots\iop n[u_n,\zeta_n]\, q^{L[0]-c/24},
\]
and it remains to show this sum vanishes. 
We argue by the same elliptic-residue mechanism as the proof of 
\cref{lemsamepoint} below, with the square-bracket stress tensor \(\tilde\omega\) 
in the role there played by a weight-one current \(a\in V_1\). 
The one difference is that \(\tilde\omega\) has weight \(2\) rather than \(1\), thus its operator product with \(\iop i[u_i,\zeta_i]\) has a double pole in addition to the simple pole, exactly as in the standard conformal Ward identity of two-dimensional \cft{} (see, e.g., \cite{MasTui-torusnpt}*{\S2} for the vertex-operator-algebraic form used here).

For \(\eta\) distinct from \(\zeta_1,\dots,\zeta_n\) modulo \(\lat\) and \(i\in\{0,\dots,n-1\}\), define
\[
H_i(\eta) := \tr_{W_0}\bigl[\iop1[u_1,\zeta_1]\cdots\iop i[u_i,\zeta_i]\,Y_{W_i}(\tilde\omega,\eta)\,\iop{i+1}[u_{i+1},\zeta_{i+1}]\cdots\iop n[u_n,\zeta_n]\,q^{L[0]-c/24}\bigr],
\]
i.e., \(\tilde\omega\) is inserted, via the \(V\)-module action on \(W_i\), between \(\iop i\) and \(\iop{i+1}\), exactly as \(a\) is inserted in the proof of \cref{lemsamepoint}. By \cref{thmhuang}, \(H_i\) converges on the corresponding annuli, and \(H_i(\eta+1)=H_i(\eta)=H_i(\eta+\tau)\) by the identical argument to \cref{propelliptic} (which used only that the inserted element is homogeneous in \(V\), not any property special to a weight-one current). As \(\eta\) crosses \(\zeta_i\), the operator product expansion of the stress tensor with a field of definite square-bracket weight,
\[
Y_{W}[\tilde\omega,\eta]\,\iop i[u,\zeta] = \frac{\wt[u]}{(\eta-\zeta)^2}\,\iop i[u,\zeta] + \frac{1}{\eta-\zeta}\,\iop i[L[-1]u,\zeta] + \bigl(\text{regular as }\eta\to\zeta\bigr),
\]
(the square-bracket form of the standard Virasoro commutator formula 
\([L_m,\phi(z)]=z^{m+1}\partial_z\phi(z)+(m+1)z^m\wt[u]\,\phi(z)\) for a field 
\(\phi\) of definite weight \(\wt[u]\), at \(m=0,1\))
 glues \(H_{i-1}\) and \(H_i\) into a single function \(H(\eta)\), meromorphic and elliptic (by the same mechanism as above) on \(\Etor\setminus\{\zeta_1,\dots,\zeta_n\}\), with at most a double pole at each \(\zeta_i\) and Laurent expansion there
\[
H(\eta) = \frac{\wt[u_i]\,\npf^{\vY}(\vu,\vzeta;\tau)}{(\eta-\zeta_i)^2} + \frac{\npf^{\vY}(u_1,\zeta_1;\dots;L[-1]u_i,\zeta_i;\dots;u_n,\zeta_n;\tau)}{\eta-\zeta_i} + O(1).
\]
The double-pole term \((\eta-\zeta_i)^{-2}\) contributes zero to \(\Res_{\eta=\zeta_i}H\): in the local coordinate \(\eta-\zeta_i\) it has the single-valued local primitive \(-(\eta-\zeta_i)^{-1}\), thus its contour integral around any small loop enclosing only \(\zeta_i\) vanishes, leaving only the simple-pole term as a residue. Hence this residue is exactly the \(i\)-th summand of the displayed sum above. 
Integrating \(H\) around the boundary of a fundamental parallelogram for 
\(\lat\) with a small disc excised around each pole \(\zeta_1,\dots,\zeta_n\), 
 (so that \(H\) is holomorphic on the resulting closed region and the integral over 
the outer boundary vanishes by 
periodicity exactly as in part 3 of the proof of \cref{lemsamepoint},
 leaving only the (negatively oriented) integrals around the excised discs, 
i.e., minus the sum of the residues at \(\zeta_1,\dots,\zeta_n\))
 shows the sum of these residues over \(i=1,\dots,n\) vanishes. Hence \(\npf^{\vY}\) is constant in \(c\).
\end{proof}

In view of \cref{proptranslation} we may write 
\(\npf^{\vY}(\vu,\vzeta;\tau)=F^{\vY}_{\vu}(x_1,\dots,x_{n-1};\tau)\) with
 \(x_i:=\zeta_i-\zeta_n\), and this is what we do from now on. 
We mention a notational point here to avoid any clash with \cref{ssecellipticity}: 
\(x(\zeta)=e(\zeta)\), always written with an explicit argument, denotes 
the exponentiated coordinate of \eqref{eqsquarebracket}-\eqref{eqnpf-def}, 
while \(x_i\), always subscripted and never applied to an argument, 
denotes the difference coordinate just introduced. At \(n=2\)  
we simply write \(x:=x_1=\zeta_1-\zeta_2\) for the latter, as in \cref{ssecleading} below. 

\subsection{Coordinatewise vanishing}
\label{sseccoordwise}

Recall from \KSW{} \S2.2 that when \(V_1\) is a finite-dimensional reductive Lie algebra under \([x,y]=x_0y\) and a homogeneous space \(U_{[m]}\) of a module \(U\) is semisimple over \(V_1\), one has the decomposition \(U_{[m]}=U_{[m]}^{\mathrm{triv}}\oplus U_{[m]}^{\mathrm{non\text{-}triv}}\) into the maximal trivial \(V_1\)-submodule and its complement, and \KSW{} Proposition 2.8 shows \(\ptf^Y(u,\tau)=0\) for every \(u\in U_{[m]}^{\mathrm{non\text{-}triv}}\), every \(V\)-module \(W\) and every \(Y\in\ityp{U}{W}{W}\). 
This comes from the identity \(\ptf^Y(a[0]u,\tau)=0\) (\(a\in V\)) of \KSW{} Proposition 2.7 (Zhu \cite{Zhu}, Miyamoto \cite{miyamoto2000intertwining}).

\begin{lem}[Zero-mode Ward identity]
\label{lemsamepoint}
For every \(a\in V_1\) and every \(u_j\in U_j\), \(j=1,\dots,n\),
\begin{equation}
\label{eqsumrule}
\sum_{i=1}^n \npf^{\vY}(u_1,\zeta_1;\dots;a_0u_i,\zeta_i;\dots;u_n,\zeta_n;\tau) = 0,
\end{equation}
the sum ranging over which coordinate receives the zero mode \(a_0\), 
the other insertions held fixed. 
\end{lem}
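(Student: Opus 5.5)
We prove \eqref{eqsumrule} with the elliptic-residue argument that \cref{proptranslation} already borrows from this lemma, now applied to a weight-one current. We work first in the domain of absolute convergence of \cref{thmhuang}(i) and then extend by analytic continuation. Fix \(a\in V_1\). For \(\eta\) distinct from \(\zeta_1,\dots,\zeta_n\) modulo \(\lat\) and \(i\in\{0,\dots,n-1\}\), define
\[
G_i(\eta):=\tr_{W_0}\bigl[\iop1[u_1,\zeta_1]\cdots\iop i[u_i,\zeta_i]\,Y_{W_i}[a,\eta]\,\iop{i+1}[u_{i+1},\zeta_{i+1}]\cdots\iop n[u_n,\zeta_n]\,q^{L[0]-c/24}\bigr].
\]
This is the \((n+1)\)-point function of a fusion chain extended by the module action of \(V\) on \(W_i\). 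It is an instance of \cref{defnnpf}, with \(Y_{W_i}\in\ispc{V}{W_i}{W_i}\) inserted.

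\textbf{Step 1 (convergence and periodicity).} By \cref{thmhuang}(i), each \(G_i\) converges on the annular region \(\abs{x(\zeta_i)}>\abs{x(\eta)}>\abs{x(\zeta_{i+1})}\). Here the bounds \(\abs{x(\zeta_0)}=1\) and \(\abs{x(\zeta_{n+1})}=\abs{q}\) are understood for \(i=0\) and \(i=n\). By \cref{propelliptic} applied to the inserted point \(\eta\), each \(G_i\) is invariant under \(\eta\mapsto\eta+1\) and \(\eta\mapsto\eta+\tau\). Commutativity of the module vertex operator with intertwining operators (Huang's duality, part of \cref{thmhuang}(ii)) shows that the \(G_i\) are restrictions of a single function \(G(\eta)\). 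Because \(V\)-insertions are local with respect to intertwiners, \(G\) is single-valued and meromorphic on \(\Etor\setminus\{\zeta_1,\dots,\zeta_n\}\). The key point is that no braid monodromy arises when \(\eta\) circles \(\zeta_i\).

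\textbf{Step 2 (local expansion and residues).} Near \(\zeta_i\), the operator product expansion of a weight-one current with an intertwiner is given by the Jacobi identity for \(\iop i\). The commutator formula \([a_0,\iop i(u,z)]=\iop i(a_0u,z)\), in square-bracket form \(Y[a,\eta]\iop i[u,\zeta]\sim \frac{1}{2\pi i}\frac{\iop i[a[0]u,\zeta]}{\eta-\zeta}+\dots\), gives a pole of order at most \(\wt\) of the higher modes. Its residue at \(\eta=\zeta_i\) is, up to the universal normalization constant, \(\npf^{\vY}(\dots;a_0u_i,\zeta_i;\dots)\), where we use \(a[0]=a_0\). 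Higher poles come from \(a[m]u_i\) with \(m\ge1\), and they contribute zero residue because their local primitives are single-valued. It is cleanest to express this as the difference \(G_i-G_{i-1}\) on the two adjacent annuli: it equals the trace of the commutator \([a[0],\iop i[u_i,\zeta_i]]\) after integrating \(\eta\) over a small circle around \(\zeta_i\). That commutator reproduces the \(i\)-th summand of \eqref{eqsumrule}.

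\textbf{Step 3 (global residue theorem).} We integrate \(G\) over the boundary of a fundamental parallelogram with small discs removed around \(\zeta_1,\dots,\zeta_n\). Opposite edges cancel by double periodicity, so the sum of the residues vanishes, which is \eqref{eqsumrule}. Equivalently, we write the \(\eta\)-integral over the \(1\)-cycle \(\abs{x(\eta)}=r\) as the zero mode \(o(a)=a[0]\) sitting in channel \(W_i\). The difference between consecutive channels is then the \(i\)-th summand. The sum telescopes to \(\tr_{W_0}[a_0\,\Pi]-\tr_{W_0}[\Pi\,a_0]\), taken with the \(q^{L[0]}\) factor; here we use that \(a_0\) commutes with \(L[0]\). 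This vanishes by cyclicity of the trace, and it gives an algebraic alternative to Step 3.

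The main obstacle is Step 1: we need that inserting a \(V\)-field yields a genuinely single-valued elliptic function of \(\eta\) glued across all channels. If this gluing proves delicate, we fall back on the purely algebraic telescoping argument. That argument only needs \([a_0,\iop i(u,z)]=\iop i(a_0u,z)\), which follows from the Jacobi identity, together with \([a_0,L[0]]=0\) and trace cyclicity in the convergence domain, so it sidesteps the analytic gluing entirely.
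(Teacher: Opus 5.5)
Your Steps 1--3 are the paper's proof: insert the current \(a\) channel by channel to form \(G(\eta)\), glue across each \(\zeta_i\) by commutativity, identify the residue at \(\zeta_i\) with the \(i\)-th summand, and apply the elliptic residue theorem. You are in fact more careful than the paper, which asserts that a weight-one current has only simple poles, whereas you correctly note that poles from \(a[m]u_i\), \(m\ge1\), can occur but carry no residue; your telescoping fallback via \([a_0,\iop{i}(u,z)]=\iop{i}(a_0u,z)\), \([a_0,L[0]]=0\) and cyclicity of the graded trace is also valid, and more elementary since it needs no double periodicity of \(G\).
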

We stress that this is a relation among the \(n\) summands, not a statement 
that each vanishes individually. That stronger statement is false in general,
 as \cref{sseccoordwise-scope} below discusses. 
\begin{proof}
Since \(a\in V_1\), \(a[0]=a_0\) and no higher-mode reduction is needed. We also mention why no logarithmic terms \(\log(z_2)\) of the kind allowed in general by the \(z^{-s-1}\log(z)^t\) expansion of \cref{ssecrecollections} arise anywhere in this proof: \(V\) is rational and \ctwo{}, thus every \(V\)-module appearing here (each \(U_i\), \(W_i\)) is a finite direct sum of simple \emph{ordinary} modules. 
Intertwining operators among ordinary modules of a rational \ctwo{} \voa{} are themselves ordinary, i.e., logarithm-free (Huang \cite{HuaVer08}). 
This is part of what "rational" is understood to mean in \KSW{} and here, 
and is the same hypothesis \cref{thmhuang} already imposes.
 Consequently \(\iop i(u_i,z_2)\), \(Y_{W_i}(a,\eta)\), and every operator appearing below expand in integer powers of the relevant variable with no \(\log\) factors, and \eqref{eqjacobi-mode} below is an identity of ordinary (not logarithmic) Laurent series.

\emph{Part 1 (the Jacobi identity as an operator identity).}
 Applying \KSW{} equation (2.24) to 
\(\iop i\in\ityp{U_i}{W_{i-1}}{W_i}\) with \(v=a\), \(U_2=W_{i-1}\), \(U_3=W_i\) 
(which is possible for any three \(V\)-modules, not requiring \(W_{i-1}=W_i\)), and taking \(\Res_{z_0}\) then \(\Res_{z_1}\) at fixed \(z_2\), gives the operator identity
\begin{equation}
\label{eqjacobi-mode}
Y_{W_i}(a,z_2)_{[0]}\,\iop i(u_i,z_2)\;-\; \iop i(u_i,z_2)\,Y_{W_{i-1}}(a,z_2)_{[0]}\;=\; \iop i\bigl(a_0u_i,z_2\bigr),
\end{equation}
valid as formal Laurent series in \(z_2\), equivalently (\cref{thmhuang}) as meromorphic functions of \(\zeta_i\).

\emph{Part 2 (an auxiliary elliptic function).} For \(\eta\in\CC\) distinct from \(\zeta_1,\dots,\zeta_n\) modulo \(\lat\), and \(i\in\{0,\dots,n-1\}\), define
\[
G_i(\eta):=\tr_{W_0}\bigl[\iop1[u_1,\zeta_1]\cdots\iop i[u_i,\zeta_i]\,Y_{W_i}(a,\eta)\,\iop{i+1}[u_{i+1},\zeta_{i+1}]\cdots\iop n[u_n,\zeta_n]\,q^{L[0]-c/24}\bigr],
\]
i.e., \(a\) is inserted, via the \(V\)-module action on \(W_i\) (not an intertwiner), between \(\iop i\) and \(\iop{i+1}\). By \cref{thmhuang}, \(G_i\) converges on the corresponding annuli.
 By the identical argument to \cref{propelliptic} (which used only that the operator being conjugated by \(q^{L[0]}\) is part of the chain, not any property special to an \(\iop j\)), \(G_i(\eta+1)=G_i(\eta)\) and \(G_i(\eta+\tau)=G_i(\eta)\). As \(\eta\) crosses \(\zeta_i\) from segment \(i-1\) into segment \(i\), \eqref{eqjacobi-mode} shows the two local representations \(G_{i-1},G_i\) glue into a single meromorphic function \(G(\eta)\) on \(\Etor\setminus\{\zeta_1,\dots,\zeta_n\}\). 
It is elliptic in \(\eta\), with (at most) a simple pole at each \(\zeta_i\) (a current of weight \(1\) has a simple-pole operator product with anything) and residue there equal, by \eqref{eqjacobi-mode} again,
\[
\Res_{\eta=\zeta_i}G(\eta) = \npf^{\vY}(u_1,\zeta_1;\dots;a_0u_i,\zeta_i;\dots;u_n,\zeta_n;\tau).
\]

\emph{Part 3 (the elliptic residue theorem).} Integrating the elliptic function \(G\) 
around the boundary of a fundamental parallelogram for \(\lat\) avoiding all poles: 
the integral vanishes because opposite sides cancel by periodicity (Part 2), 
thus by the residue theorem the sum of the residues enclosed 
 (one at each \(\zeta_i\), \(i=1,\dots,n\)) is zero. This is \eqref{eqsumrule}.
\end{proof}

\subsection{Scope of the coordinatewise reduction}
\label{sseccoordwise-scope}

\Cref{lemsamepoint} is a Ward identity relating the \(n\) coordinates, not a statement that a single \(a_0\)-insertion vanishes in isolation: \eqref{eqsumrule} only forces \(\npf^{\vY}(\dots;a_0u_i,\dots)=0\) if the remaining \(n-1\) terms of the sum \emph{also} vanish, which is not automatic when the other insertions \(u_j\) are otherwise arbitrary. \Cref{propcoordwise} 
below is stated to reflect exactly this: 
it does not follow from \cref{lemsamepoint} alone for a fully general chain, and we restrict its scope accordingly.

\begin{prop}[Restricted coordinatewise reduction]
\label{propcoordwise}
Suppose \(V_1\) is reductive. If \(u_j\in (U_j)_{[m_j]}^{\mathrm{non\text{-}triv}}\) for \emph{every} \(j=1,\dots,n\) simultaneously (each \((U_j)_{[m_j]}\) semisimple over \(V_1\)), then
\[
\sum_{i=1}^n\npf^{\vY}\bigl(u_1,\zeta_1;\dots;a_0^{(i)}w_i,\zeta_i;\dots;u_n,\zeta_n;\tau\bigr)=0
\]
for the decompositions \(u_i=a_0^{(i)}w_i\) of the proof below. If in addition it is known independently that \(n-1\) of the \(n\) terms vanish, the remaining one vanishes too.
\end{prop}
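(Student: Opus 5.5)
The plan is to reduce \cref{propcoordwise} to \cref{lemsamepoint}, after first writing each non-trivial vector as a sum of zero-mode images. Since \(V_1\) is reductive and \((U_j)_{[m_j]}\) is semisimple over \(V_1\), the non-trivial part \((U_j)_{[m_j]}^{\mathrm{non\text{-}triv}}\) is a direct sum of non-trivial simple \(V_1\)-modules. On each such summand \(M\) the image \(V_1\cdot M\) is a non-zero submodule, hence all of \(M\). This holds because the trivial-isotypic part is by definition the common kernel of all \(a_0\), \(a\in V_1\). For the centre of \(V_1\), acting by a non-zero scalar on a non-trivial one-dimensional summand, one simply inverts that scalar.

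So the first step produces, for each \(j\), finitely many \(a^{(j,r)}\in V_1\) and \(w_{j,r}\in (U_j)_{[m_j]}\) with \(u_j=\sum_r a^{(j,r)}_0 w_{j,r}\). This is the decomposition ``\(u_i=a_0^{(i)}w_i\)'' of the statement, understood as a finite sum, exactly as in the proof of \KSW{} Proposition 2.8. Because the grading is preserved by \(a_0=a[0]\), the \(w_{j,r}\) may be taken homogeneous of the same square-bracket weight. This keeps every function in sight inside a single weight space and so inside a single modular transformation law.

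The second step applies \cref{lemsamepoint} with \(a=a^{(i,r)}\). We take \(w_{i,r}\) in the \(i\)-th slot and the fixed \(u_j\) in the other slots. This gives
\[
\npf^{\vY}(\dots;a^{(i,r)}_0w_{i,r},\zeta_i;\dots)=-\sum_{j\ne i}\npf^{\vY}(\dots;w_{i,r},\zeta_i;\dots;a^{(i,r)}_0u_j,\zeta_j;\dots).
\]
By multilinearity of \(\npf^{\vY}\) in the insertions, summing over \(r\) yields the \(i\)-th term of the displayed sum. Summing the resulting relations over \(i\) then yields the displayed identity, once one checks that the cross terms reorganise consistently. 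This bookkeeping is the step I expect to be the main obstacle, because \cref{lemsamepoint} moves the zero mode onto the other slots rather than killing it.

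My first attempt would be to make the single zero mode \(a\) common to all slots. For instance, if a Cartan element of \(V_1\) acts with non-zero eigenvalue on every \(u_j\) (after splitting into weight vectors), then \(a_0u_j=\alpha_ju_j\), and \eqref{eqsumrule} reads \(\bigl(\sum_j\alpha_j\bigr)\npf^{\vY}=0\). This is a statement of precisely the summed form required, with \(w_i=\alpha_i^{-1}u_i\). In the general case I would state the conclusion exactly as the summed relation delivered by \cref{lemsamepoint} for these decompositions, and claim nothing beyond it.

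The final assertion is then immediate. The displayed sum vanishes, so if \(n-1\) of its terms are independently known to vanish, the remaining one equals zero. At \(n=1\) the argument collapses to \KSW{} Proposition 2.8, which serves as a consistency check.
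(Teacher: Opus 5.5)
Your first step is fine. Semisimplicity does let you write \(u_j=\sum_r a^{(j,r)}_0w_{j,r}\) with homogeneous \(w_{j,r}\), which is a little more careful than the single-term decomposition the paper writes. The gap is exactly the step you flag, and no bookkeeping will close it, because the displayed identity fails for \(n\ge2\).

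Since \(a^{(i)}_0w_i=u_i\), every one of the \(n\) terms equals \(\npf^{\vY}(u_1,\zeta_1;\dots;u_n,\zeta_n;\tau)\). So the claim is \(n\,\npf^{\vY}(\vu,\vzeta;\tau)=0\), i.e.\ the \(n\)-point function would vanish on every tuple of non-trivial vectors. But \cref{lemsamepoint} says only that \((u_1,\dots,u_n)\mapsto\npf^{\vY}\) is a \(V_1\)-invariant multilinear form, and invariant forms on tensor products of non-trivial modules need not vanish.

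Here is a concrete counterexample. Take \(V=L(k,0)\) with \(k\ge1\), \(n=2\), \(U_1=U_2=V\), \(\iop1=\iop2=Y_W\), and \(u_1=e_{-1}\wun\), \(u_2=f_{-1}\wun\) in \(V_{[1]}=V_1\cong\alg{sl}(2)\); this is the adjoint module, which has no invariants. The resulting function is the torus current--current correlator. Its double pole at \(\zeta_1=\zeta_2\) has coefficient proportional to \(k\,\tr_Wq^{L(0)-c/24}\ne0\), coming from \(e_1f_{-1}\wun=k\wun\). The Ward identity with \(a=h\) reads \(2\npf^{\vY}-2\npf^{\vY}=0\) and gives no information.

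Your Cartan fallback does not rescue this. There \eqref{eqsumrule} gives \(\bigl(\sum_j\alpha_j\bigr)\npf^{\vY}=0\). That is not the displayed identity, whose coefficient is \(n\), and it forces vanishing only when \(\sum_j\alpha_j\ne0\). The example above, with \(h_0\)-eigenvalues \(2\) and \(-2\), is precisely the case it cannot reach.

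Likewise, the relation you propose to state ``exactly as delivered by the lemma'' is the displayed sum plus the cross terms \(\npf^{\vY}(\dots;w_{i,r},\zeta_i;\dots;a^{(i,r)}_0u_j,\zeta_j;\dots)\). It is true, but it is a different statement, so your closing claim that ``the displayed sum vanishes'' is unsupported.

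The paper's proof does not supply the missing step either. It applies \cref{lemsamepoint} with \(a=a^{(i)}\) ``simultaneously for every \(i\)'' and asserts the stated sum, silently discarding the same cross terms. The argument is valid only at \(n=1\), where it is \KSW{} Proposition 2.8, or when all \(u_j\) are eigenvectors of one \(a_0\) with \(\sum_j\alpha_j\ne0\). The second assertion of the proposition holds only trivially, since all \(n\) terms coincide.
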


\begin{proof}
If \(u_i\) lies in a non-trivial simple \(V_1\)-submodule of \((U_i)_{[m_i]}\),
 the standard fact that a semisimple Lie algebra acts transitively 
(via sums of elements \(a_0w\)) on every non-trivial isotypic summand of a 
representation with no invariants gives \(u_i=a_0^{(i)}w_i\) for some 
\(a^{(i)}\in V_1\), \(w_i\in(U_i)_{[m_i]}\).
 \cref{lemsamepoint}, 
applied with \(a=a^{(i)}\), \(w_i\) at coordinate \(i\) simultaneously for every \(i\), 
gives the stated sum. We emphasize that \cref{lemsamepoint} alone does \emph{not} 
show a single term vanishes when the other insertions are otherwise arbitrary.
 \cref{sseccoordwise-scope} discusses this. 
 To be explicit about what the Ward identity alone can and cannot uncouple, 
 for general \(n\), we know of no mechanism 
  by which \eqref{eqsumrule}'s single scalar equation can be split into 
individually-vanishing statements about a proper subset of its \(n\) terms. 
 A sum of \(n\ge2\) unknowns constrained by one linear relation is 
generically \((n-1)\)-dimensional, and nothing in the construction of \eqref{eqsumrule} 
prefers any subset of coordinates over another. This is why, in \cref{secsl2}, we do not attempt any partial uncoupling of this kind and instead bypass \cref{propcoordwise} entirely. 
 Wherever individual vanishing is actually needed, verifying it coordinate by coordinate 
using \KSW's{} own explicit \(1\)-point computation 
(their Lemma 4.3, which identifies the exact weight at which the trivial 
\(\alg{sl}(2)\)-module first appears) rather than this proposition 
 (see the proof of \cref{thmbiprimary}). 
We mention the one broad, easily-checked situation in which the "\(n-1\) 
terms independently known" hypothesis of \cref{propcoordwise} \emph{is} 
generically available, so that the proposition is not vacuous.  
Suppose \(n-1\) of the insertions \(u_j\) are themselves extremal weight vectors of their 
respective modules \(U_j\) (i.e., each lies in the \(1\)-dimensional highest- or lowest-weight 
line for the specific \(a^{(i)}\in V_1\) used to decompose \(u_i\)). 
 Then \(a^{(i)}_0u_j\) is again a multiple of \(u_j\) itself 
(an eigenvector, not a generic vector of the isotypic summand).  
The corresponding trace \(\npf^{\vY}(\dots;a^{(i)}_0u_j,\dots)\)
 reduces to a scalar multiple of \(\npf^{\vY}(\dots;u_j,\dots)\) 
evaluated with the \emph{other} \(n-2\) insertions fixed. 
This is a strictly smaller computation that can be checked directly, 
term by term, exactly as \KSW{} Lemma 4.3 checks the vanishing of a \(1\)-point 
trace by an explicit weight count, rather than invoked as an instance of 
\cref{propcoordwise} itself. This is not a general uncoupling mechanism. 
 It still requires the resulting smaller computation to be carried out. 
It does not extend to \(u_j\) at a generic weight within its isotypic summand. 
 But it is exactly the pattern occurring at every coordinate in the extremal affine 
\(\alg{sl}(2)\) two-point computation of \cref{secsl2}, where both insertions are themselves extremal 
(\cref{thmbiprimary}).
\end{proof}

Recall from \KSW{} Definition 3.1 the notation \(\Vir(u)=U(L[<0])u\), \(N(u)=U(L[>0])L[>0]u\) for an \(L[0]\)-eigenvector \(u\), and that \(u\) is \emph{torus primary} if \(\Psi_\lambda(u,\tau)\ne0\) but \(\Psi_\lambda(w,\tau)=0\) for all \(w\in N(u)\).

\begin{defn}
\label{defnnpoint-primary}
Call \((u_1,\dots,u_n)\), \(u_i\in W_{\lambda_i}\) an \(L[0]\)-eigenvector for each \(i\), a \emph{torus \(n\)-point primary tuple} if each \(u_i\) is torus primary in the sense of \KSW{} Definition 3.1, taken with respect to \(W_{\lambda_i}\).
\end{defn}

\begin{prop}
\label{propnpoint-Lemma32}
Let \((u_1,\dots,u_n)\) be a torus \(n\)-point primary tuple. Fix \(i\) and homogeneous \(w\in U(L)u_i\). There exist finitely many \(w_r\in\Vir(u_i)\), in the same \(L[0]\)-eigenspace as \(w\), and quasi-modular forms \(f_r\in\CC[G_2,G_4,G_6,\dots]\) (\(\tau\)-independent when \(w\in\Vir(u_i)\) already, e.g.\ for \(w=u_i\) itself, in which case the sum below is the single term \(f\equiv1\), \(w_r=w\)), such that
\[
\npf^{\vY}(u_1,\zeta_1;\dots;w,\zeta_i;\dots;u_n,\zeta_n;\tau) = \sum_r f_r(\tau)\,\npf^{\vY}(u_1,\zeta_1;\dots;w_r,\zeta_i;\dots;u_n,\zeta_n;\tau)
\]
for every fusion chain \(\vY\), every choice of \(u_j\in U_j\) (\(j\ne i\)) 
and every \(\vzeta\). 
\end{prop}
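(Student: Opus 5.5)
The plan is to imitate the proof of \KSW{} Lemma 3.2 (the \(n=1\) reduction of \(U(L)u\) to \(\Vir(u)\)) coordinate by coordinate. The key input is a Zhu-type recursion for a Virasoro mode \(L[m]\), \(m\ge1\), acting on the single insertion at \(\zeta_i\), with all other insertions held fixed. Write \(U(L)=\Vir(u_i)\)-part plus the left ideal generated by \(L[>0]\). By PBW ordering with \(L[<0]\) to the left, every homogeneous \(w\in U(L)u_i\) is a finite sum of a term \(w^{(0)}\in\Vir(u_i)\) and terms of the form \(L[-j_1]\cdots L[-j_s]\,v\) with \(v\in N(u_i)\). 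I would induct on the length of the PBW monomial and on the \(L[0]\)-weight, in the same manner as \KSW{}.

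The first step is to establish the torus recursion for \(L[-j]\) at a single coordinate. Insert \(Y_{W}[\tilde\omega,\eta]\) between consecutive intertwiners exactly as in the function \(H(\eta)\) of the proof of \cref{proptranslation}, and multiply by a Weierstrass-type kernel \(P_{j}(\eta-\zeta_i,\tau)\) (derivatives of \(\zeta\)/\(\wp\)). Then apply the elliptic residue theorem as in Part 3 of \cref{lemsamepoint}. This expresses \(\npf(\dots;L[-j]v,\zeta_i;\dots)\) as a sum of two kinds of terms. The first kind are terms \(G_{2k}(\tau)\,\npf(\dots;L[2k-j]v,\zeta_i;\dots)\), which are local, at the same point. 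The second kind are cross terms in which \(L[m]\) acts on the other insertions \(u_l\), weighted by elliptic functions of \(\zeta_i-\zeta_l\). For \(j=1\) the kernel is trivial and one recovers the translation relation of \cref{proptranslation}.

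The second step is to kill all contributions that live in \(N(u_i)\). By torus primarity of \(u_i\), \(\Psi_{\lambda_i}(v,\tau)=0\) for \(v\in N(u_i)\). One must upgrade this \(1\)-point vanishing to vanishing of \(\npf(\dots;v,\zeta_i;\dots)\). I would argue this using the primarity of the whole tuple together with \cref{propcoordwise}-style bookkeeping: the positive-mode terms reduce weight, and at the bottom only torus-primary data survive. Once those terms are gone, the surviving local terms are \(G_{2k}\)-multiples of \(\npf\) evaluated on vectors of \(\Vir(u_i)\) of the same \(L[0]\)-weight. Iterating yields \(f_r\in\CC[G_2,G_4,\dots]\). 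In the special case \(w\in\Vir(u_i)\) no reduction is performed, so we get \(f\equiv1\) and \(w_r=w\).

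The main obstacle is the cross terms: \(L[m]\) acting on \(u_l\) for \(l\ne i\), with coefficients that depend on \(\zeta_i-\zeta_l\) and not only on \(\tau\). These terms are incompatible with the claimed \(\zeta\)-independent coefficients \(f_r(\tau)\). My first attempt would be to use the primarity of every \(u_l\): the terms with \(L[m]u_l\), \(m\ge1\), lie in \(N(u_l)\). The \(L[0]u_l\) and \(L[-1]u_l\) terms are exact multiples \(\wt[u_l]\) and derivatives. I would then try to show that the latter combine, via \cref{proptranslation}, into total derivatives that vanish upon summing over \(l\). If this cancellation fails, the fallback is to state the proposition with \(f_r\) allowed to depend on the \(x_l\) (elliptic in them). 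The purely \(\tau\)-dependent statement would then be recovered only in the leading OPE coefficient.
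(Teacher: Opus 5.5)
\textbf{There is a genuine gap in your second step, and it cannot be closed under the stated hypotheses.}

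First look at what the statement demands when $w\in N(u_i)$, for instance $w=L[1]u_i$. The space $\Vir(u_i)=U(L[<0])u_i$ lives in $L[0]$-weights $\ge\wt[u_i]$, whereas $w$ has weight $\wt[u_i]-1$. So $\Vir(u_i)$ meets that eigenspace only in $0$, and the right-hand side vanishes. The proposition therefore forces $\npf^{\vY}(\dots;y,\zeta_i;\dots)\equiv0$ for every $y\in N(u_i)$, every chain, and \emph{arbitrary} $u_j$ with $j\ne i$.

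Conversely, if you had such $n$-point vanishing on $U(L)N(u_i)$, your PBW splitting alone would give the claim with the single term $f\equiv1$, $w_r=w^{(0)}$; no Eisenstein coefficients would be needed. The vanishing itself is what your Step 2 does not supply:
\begin{itemize}
\item Torus primarity is a one-point condition and yields only $\Psi_{\lambda_i}(y)=0$.
\item \cref{propcoordwise} yields only a sum rule, as the paper itself stresses.
\item Primarity of the other $u_l$ cannot be used to dispose of cross terms, because the statement quantifies over arbitrary $u_j$.
\item The cross terms do not cancel via \cref{proptranslation} either. In the recursion for $L[-2]v$ at $\zeta_i$, the $L[-1]u_l$ terms carry the $l$-dependent coefficients $P_1(\zeta_l-\zeta_i,\tau)$, and the $L[0]u_l$ terms carry $\wt[u_l]\,P_2(\zeta_l-\zeta_i,\tau)$. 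These are not the uniform weights that translation invariance annihilates.
\end{itemize}

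\textbf{In fact the one-point-to-$n$-point upgrade is false, so the statement itself fails for $n\ge2$.} Take $V=L(k,0)\otimes L(k,0)$ with $k\ge1$, let $h^{(1)}$ be the Cartan current of the first factor, and set $u:=\omega^{(1)}-\omega^{(2)}+L[-1]h^{(1)}$.
\begin{itemize}
\item $L[1]u=2h^{(1)}$ and $L[2]u=\tfrac c2\wun-\tfrac c2\wun+3L[1]h^{(1)}=0$, so $N(u)=\CC h^{(1)}$.
\item $\Psi(h^{(1)})=0$ by \KSW{} Proposition 2.8.
\item The $L(k,0)\otimes L(k,1)$-component of $\Psi(u)$ is $\bigl(q\tfrac{d}{dq}\ch{L(k,0)}\bigr)\ch{L(k,1)}-\ch{L(k,0)}\,q\tfrac{d}{dq}\ch{L(k,1)}$. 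Its leading coefficient is a non-zero multiple of $h_1-h_0$.
\end{itemize}
So $(u,u)$ is a torus $2$-point primary tuple. Yet for the chain $(Y_W,Y_W)$ on a simple module $W$, the function $\npf(h^{(1)},\zeta_1;h^{(1)},\zeta_2;\tau)$ has leading singularity a non-zero multiple of $k\,\ch{W}(\tau)(\zeta_1-\zeta_2)^{-2}$. Even with $u$ itself in the second slot, the $L[-1]h^{(1)}$ term produces a triple pole. The contribution of $\omega^{(1)}-\omega^{(2)}$ has at most a double pole, so it cannot cancel it.

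\textbf{Comparison with the paper's proof.} The paper gets past your obstacle only by asserting that the genus-one specialization of Tuite--Welby's recursion carries the other insertions through untouched, with purely quasi-modular coefficients. The genus-one $n$-point Zhu recursion in fact contains exactly the $P_{m+1}(\zeta_l-\zeta_i,\tau)$-weighted cross terms you anticipated.

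\textbf{What is missing} is a genuinely $n$-point hypothesis: vanishing on $N(u_i)$ against all other insertions. Your fallback, allowing $x_l$-dependent $f_r$, changes the statement and still leaves the $N(u_i)$-contributions unaccounted for.
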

We state it this way, rather than as a bare equality with a single \(\tilde w\), 
because the reduction mechanism below (Part 1) produces \(\tau\)-dependent 
Eisenstein coefficients at each application, not scalar multiples.
\begin{proof}
We spell out the first steps explicitly and indicate why the pattern
 is local to coordinate \(i\). 
The general case is then the identical computation by induction on word length, exactly as in \KSW{} Lemma 3.2.

\emph{Part 0 (base case).} For \(w=u_i\) itself, take \(\tilde w=u_i\in\Vir(u_i)\) trivially.

\emph{Part 1 (absorbing one raising mode).} A natural first attempt is to extract \(L[m]v\) (\(m>0\)) from the auxiliary function \(H(\eta)\) of \cref{proptranslation}'s proof by taking \(\Res_{\eta=\zeta_i}(\eta-\zeta_i)^mH(\eta)\) and invoking the elliptic residue theorem exactly as there.
  This does not work, because \((\eta-\zeta_i)^m\) is not itself periodic, and multiplying by it destroys the double periodicity of \(H\) that the elliptic residue theorem requires. A single stress-tensor insertion, via the operator product expansion used in \cref{proptranslation}, naturally yields only the double- and simple-pole coefficients (the cases \(m=0,1\)). 
 It does not by itself give access to \(L[m]v\) for \(m\ge2\).

The correct route for general \(m\) is the multi-point generalization of Zhu's recursion formula, and we can now make the underlying mechanism fully concrete by quoting its \(n=1\) instance exactly. 
\KSW{} Proposition 2.7 (due to Zhu \cite{Zhu} and Miyamoto \cite{Miy-CFTauto}) proves,
 for \(a\in V\) and \(u\) in a \(V\)-module \(U\), and \(Y\in\ityp{U}{W}{W}\) 
any \(1\)-point intertwiner,

\[
\psi^Y(a_{[0]}u,\tau)=0, \qquad 
 \psi^Y(a_{[-1]}u,\tau) = \tr_W o(a)o(u)q^{L(0)-c/24}
 + \sum_{\ell=1}^\infty G_{2\ell}(\tau)\,\psi^Y(a_{[2\ell-1]}u,\tau).
\]
 
 It is exactly the second identity, applied with \(a=\tilde\omega\) 
(so that \(\tilde\omega_{[-1]}=L[-2]\) and \(\tilde\omega_{[2\ell-1]}=L[2\ell-2]\)), 
that \KSW{} themselves use 
(via the vectors \(x_r(z)=L[-2]L[-1]^{2r}z\) of their Proposition 3.3 proof) 
 to absorb Virasoro raising modes at \(n=1\). 
 The \emph{same} \(a_{[-1]}u\) identity, with the roles of \(a\) and \(u\) 
swapped to isolate \(L[2r]L[-1]^{2r}z\) on one side, is solved recursively 
for the raising-mode term in favor of \(\psi^Y(z,\tau)\) itself (order \(0\)) and 
 \emph{strictly lower}-weight correction terms \(\psi^Y(x_{\ell,r,z},\tau)\), 
\(\ell\ge r+2\), with quasi-modular coefficients 
 \(G_{2\ell}(\tau)\) 
(precisely the "Eisenstein kernel" mechanism referred to above, 
now displayed in full rather than 
left implicit).
  The two-point analogue of the first term on the right, \(\tr_Wo(a)o(u)q^{L(0)-c/24}\)
 (which \KSW{} must construct by hand at \(n=1\), 
since there is only one marked point available) 
is, in our setting, already supplied for free by the neighbouring insertion \(\iop{i+1}[u_{i+1},\zeta_{i+1}]\) (or, at \(n=1\), would reduce to exactly \KSW's{} own construction). 
 This is the sense in which the \(n\)-point recursion needed here. 
 It is the natural analogue
 of \KSW{} Proposition 2.7/Proposition 3.3's own mechanism, 
 with the second marked point KSW must construct replaced by a neighbouring coordinate of the chain. 
     Making this analogue fully explicit 
 (carrying \(\iop j[u_j,\zeta_j]\), \(j\ne i,i+1\), through the computation 
 and re-deriving the resulting \(G_{2\ell}\)-recursion at the level of the auxiliary 
function \(H\) above)  is precisely the content of the general genus intertwining operator 
recursion of Tuite-Welby \cite{TuiWel-genusg}, Theorem 5.2, specialized to genus \(1\). 
 We invoke their theorem rather than reproduce its (lengthy) proof here, exactly as we invoke \cref{thmhuang} without reproving Huang's convergence theorem.
  The reader can now see precisely which \(n=1\) identity is being generalized and how.
  Their Theorem 5.2 is stated in the Schottky uniformization at general genus \(g\) 
and for a chain of intertwining operators reduced with respect to an element of 
\(V\) (here \(\tilde\omega\)). It specializes at \(g=1\), with the Schottky parameter 
set to \(q=e^{2\pi i\tau}\), to exactly the reduction of \(L[m]v\) 
at a single marked point \(\zeta_i\) of our cyclic chain described above. 
 The remaining points \(\zeta_j\) (\(j\ne i\)) playing the role of the "other insertions" of \emph{loc.\ cit.}\ that are carried through unchanged. We caution, following the general remark of \cite[\S1]{TuiWel-genusg} and its predecessors, that the recursion for chains of intertwining operators is a different (and later) result than Zhu's original recursion for vertex operators of \(V\) itself or Dong-Li-Mason's \cite{DLM-orbifold} extension to twisted (orbifold) modules. 
 Neither of the latter two directly covers products of several intertwining operators 
  which is why we cite Tuite-Welby specifically here rather than the more elementary sources used elsewhere in this section.
 Note that \(V=L(k,0)\) is rational and \ctwo{} throughout this paper
 (\cref{secprelim}), exactly the hypothesis under which \cite{TuiWel-genusg} 
construct the genus-\(g\) recursion for chains of intertwining operators.  
 Genus is specialized to \(g=1\) (a torus, 
rather than a general Schottky-uniformized surface). 
 The Schottky/sewing parameter of \emph{loc.\ cit.}\ is identified with 
\(q=e^{2\pi i\tau}\), the single modulus of a genus-\(1\) Schottky group. 
 The recursion is applied to reduce raising modes of \(\tilde\omega\)
 at the single marked point \(\zeta_i\), with every other marked point 
\(\zeta_j\) (\(j\ne i\)) 
(a chain of intertwining operators between simple modules, not vertex operators of 
\(V\) itself) held fixed as the "other insertions" of their construction,
 exactly as spelled out in the paragraph above.

One useful observation about the reduction is that every other insertion 
\(\iop j[u_j,\zeta_j]\), \(j\ne i\), is carried through it untouched. 
 The recursion kernel and the residue computation at \(\zeta_i\) reference 
the other factors only through their being some fixed, convergent insertions 
(\cref{thmhuang}), never through their particular values. 
 This is exactly \KSW{} Proposition 2.7's own mechanism for \(n=1\) 
unchanged into the \(n\)-point trace since the construction never looks past  
the two neighbouring points \(\zeta_{i-1}\) and \(\zeta_{i+1}\). 

\emph{Part 2 (torus primarity terminates the induction).} 
Since \((u_1,\dots,u_n)\) is a torus \(n\)-point primary tuple, \(u_i\) 
is torus primary for \(W_{\lambda_i}\) in the sense of \KSW{} Definition 3.1. 
 Iterating Part 1 down through every raising mode occurring in a word representing \(w\) reduces the trace with \(w\) at coordinate \(i\) to a finite linear combination of traces with elements of \(\Vir(u_i)\) at coordinate \(i\), with coefficients lying in the ring of quasi-modular forms \(\CC[G_2,G_4,G_6,\dots]\) generated by the Eisenstein series appearing in Part 1. 
 Those are \emph{not} constant scalars in \(\CC\), 
except in the degenerate case where no Eisenstein term contributes at a given order. 
 This is exactly what the cited recursion of Tuite-Welby \cite{TuiWel-genusg} 
produces, and it is these quasi-modular coefficients, not bare constants, 
that we mean throughout by "reduces to a combination of \(\Vir(u_i)\)". 
  Since \(\tau\)-dependence of this kind is already accounted for by the 
\(1\)-point functions \(\ptf(\cdot,\tau)\) themselves being functions of \(\tau\), 
it introduces no new variable and does not affect the coefficient 
ring of \(\vy\)-dependence used in \cref{thmtaylor}.
 Once all raising modes have been absorbed, the resulting trace can be represented by 
an element of \(\Vir(u_i)\), since by definition \(\Vir(u_i)=U(L[<0])u_i\) is exactly 
the set of outputs Part 1 can produce 
 (giving the required \(\tilde w\in\Vir(u_i)\) up to such quasi-modular coefficients). 
  Homogeneity of \(w\) forces homogeneity of \(\tilde w\) in the same \(L[0]\)-degree, since every application of Part 1 is degree-preserving (\(L[m]\) shifts degree by \(-m\), and the residue extraction matches that shift on the right side).
\end{proof}

Reformulating, at each coordinate separately, only the Virasoro submodule 
generated by a torus primary vector contributes, exactly as in the \(1\)-point theory. 
 This is what makes the affine \(\alg{sl}(2)\) computation of \cref{secsl2} tractable. 

\subsection{Laurent coefficients of \(n\)-point functions are vector-valued modular forms}
\label{ssectaylor}

This is our main structural result. Fix generic \(y_1,\dots,y_{n-1}\in\CC^\times\), pairwise distinct. Iterating the operator product expansion \eqref{eqiterate} below at each of the \(n-1\) successive fusions collapsing the chain to the base point \(\zeta_n\) (\cref{propfinite-channels} shows this is well defined)
 produces an expansion, along the ray \(x_i=\epsilon y_i\) \((i=1,\dots,n-1)\) as \(\epsilon\to0\) on a fixed branch. 
Here \(\epsilon\) real and positive, say, once a branch of every 
fractional power below has been fixed once and for all 
(see the discussion after \eqref{eqlaurent}). We have: 
\begin{equation}
\label{eqlaurent}
F^{\vY}_{\vu}(\epsilon y_1,\dots,\epsilon y_{n-1};\tau) = \sum_{\alpha\in A} c_\alpha(y_1,\dots,y_{n-1};\tau)\,\epsilon^\alpha,
\end{equation}
where \(A\subset\QQ\) is a finite union of arithmetic progressions of common difference \(1\), \(A=\bigcup_{j=1}^r\set{\alpha_j,\alpha_j+1,\alpha_j+2,\dots}\), 
 one progression for each of the finitely many ways
 (fusion channel by fusion channel, at each of the \(n-1\) fusions in a fixed fusion tree) 
of collapsing the chain via \eqref{eqiterate}. 
 Each \(\alpha_j\) is a sum of exponents \(h_\nu+m-h_{\lambda_i}-h_{\lambda_{i'}}\) of the shape appearing in \eqref{eqiterate}, one for each of the \(n-1\) fusions along the tree. 
When every such exponent is an integer 
(as we verify explicitly for the affine \(\alg{sl}(2)\) extremal channel of \cref{secsl2}
 at even level (\cref{propextremal-leading}))
  \(A\subset\ZZ\) and \eqref{eqlaurent} is an ordinary Laurent expansion. 
 In general \(A\not\subset\ZZ\) and \eqref{eqlaurent} is a Puiseux-type expansion, and the identity is understood on the fixed branch just described, equivalently in the fixed chamber of \(\mathrm{Conf}_n(\Etor)\) reached from the base configuration without crossing a collision divisor (\cref{thmhuang}). 
  Continuing \(\epsilon\) around a full loop back to itself instead permutes the branches by the braiding monodromy of \cref{ssecellipticity} and plays no further role below. This is valid (by \cref{thmhuang}) for \(0<\abs\epsilon<\epsilon_0(\vy,\tau)\), where \(\epsilon_0(\vy,\tau):=\min\bigl(1,\min_{i}\abs{y_i},\min_{i\ne j}\abs{y_i-y_j}\bigr)\) up to the fixed (\(\vy\)-independent) constant relating the coordinates \(x_i=\epsilon y_i\) to the domain of \cref{thmhuang}. 
  Shrinking \(\epsilon\) keeps every pairwise separation \(x_i-x_j=\epsilon(y_i-y_j)\) and \(x_i-x_n=\epsilon y_i\) inside the annulus of convergence there, and the bound degenerates (\(\epsilon_0\to0\)) exactly as \(\vy\) approaches a collision divisor \(y_i=0\) or \(y_i=y_j\), consistently with the pole structure identified in \cref{thmtaylor} below. \(\min A\) is bounded in terms of the order of the poles of \(\npf^{\vY}\) along \(\Delta\).

%%%%%%%%%%%%%%%%%%%%%%%%%%%%%%%%%%%%%%%%%%%%%%%%%%%%%%%%%%%%%%%%%%%%%%%%%%%%%%%%
\begin{thm}
\label{thmtaylor}
For each \(\alpha\in A\), the coefficient \(c_\alpha(y_1,\dots,y_{n-1};\,\cdot\,)\) 
of \eqref{eqlaurent}, as a function of \(\tau\in\HH\), is (the \(\vY\)-component of) 
a vector-valued modular form of weight \(\sum_{i=1}^n\wt[u_i]+\alpha\), 
representation \(\rho_{W_0}\), and multiplier system \(\nu_{h_{W_0}}\), 
in the sense of \KSW{} Definition 2.2 
 (i.e., it lies in \(M^!\bigl(\sum_i\wt[u_i]+\alpha,\ \rho_{W_0},\ \nu_{h_{W_0}}\bigr)\)). 

Moreover, \(c_\alpha(y_1,\dots,y_{n-1};\tau)\) is a finite linear combination of
 functions \(\ptf^{\widetilde{\mathcal Y}}(w,\tau)\) 
with \(\widetilde{\mathcal Y}\in\ityp{\tilde U}{W_0}{W_0}\) 
an intertwining operator built from \(\iop1,\dots,\iop n\) 
by iterated application of the intertwining-operator Jacobi identity 
(i.e., of the operator product expansion within a fixed fusion channel). 
\end{thm}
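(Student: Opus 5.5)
The argument runs in two stages: first obtain the explicit description (the second assertion), then derive the modularity (the first assertion) from the transformation law of \cref{thmhuang}(iii), using the fact that scaling the coordinates by \((c\tau+d)^{-1}\) is compatible with the ray parametrization \(x_i=\epsilon y_i\).

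\emph{Step 1 (expansion via iterated OPE).} Fix a fusion tree collapsing \(\zeta_1,\dots,\zeta_{n-1}\) successively onto \(\zeta_n\). At each fusion I would use the intertwining-operator Jacobi identity in the form of the associativity/OPE \eqref{eqiterate}:
\[
\iop{i}[u,\zeta]\,\iop{i'}[u',\zeta'] = \sum_{\nu}\sum_{m\ge0}(\zeta-\zeta')^{h_\nu+m-h_{\lambda_i}-h_{\lambda_{i'}}}\,\mathcal Y_\nu[w_{\nu,m},\zeta'],
\]
with \(w_{\nu,m}\) of square-bracket weight \(\wt[u]+\wt[u']+(h_\nu+m-h_{\lambda_i}-h_{\lambda_{i'}})\) up to sign convention, and \(\mathcal Y_\nu\) in a finite set of channels (\cref{propfinite-channels}). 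Iterating \(n-1\) times and substituting \(x_i=\epsilon y_i\) collects all terms with a given total exponent \(\alpha\) of \(\epsilon\). By \cref{proptranslation} the trace of the remaining single operator \(\widetilde{\mathcal Y}[w,\zeta_n]\in\ityp{\tilde U}{W_0}{W_0}\) is independent of \(\zeta_n\), so it equals \(\ptf^{\widetilde{\mathcal Y}}(w,\tau)\) (the \(n=1\) reduction stated after \cref{defnnpf}). Thus \(c_\alpha(\vy;\tau)=\sum p_j(\vy)\,\ptf^{\widetilde{\mathcal Y}_j}(w_j,\tau)\), a finite sum with \(p_j\) monomials in the \(y\)'s (products of \(y_i^{\beta}\), \((y_i-y_j)^{\beta}\)). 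Each \(w_j\) has square-bracket weight \(\sum_i\wt[u_i]+\alpha\) by weight homogeneity of the OPE, and finiteness at fixed \(\alpha\) follows from the lower-bounded grading. Convergence and the interchange of the \(\epsilon\)-expansion with the trace come from \cref{thmhuang}(i),(ii) in the region \(0<\abs\epsilon<\epsilon_0\).

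\emph{Step 2 (modularity).} Two routes are available, and I would use the second, with the first as a consistency check. The first route is to invoke \KSW{} Theorem 2.9 for each \(\ptf^{\widetilde{\mathcal Y}_j}(w_j,\cdot)\). This only gives representation \(\rho_\lambda\) for the appropriate \(\tilde U\), and matching it to \(\rho_{W_0}\) on chains needs more work. The second route applies \eqref{eqnpf-modular} with \(\zeta_i=\zeta_n+\epsilon y_i\). Translation invariance lets me set \(\zeta_n=0\), and then the left side is \(F^{\vY}\bigl(\tfrac{\epsilon}{c\tau+d}\vy;\gamma\tau\bigr)=\sum_\alpha c_\alpha(\vy;\gamma\tau)(c\tau+d)^{-\alpha}\epsilon^\alpha\). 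The right side is \(\nu_{h_{W_0}}(\gamma)(c\tau+d)^{\sum\wt[u_i]}\sum_\alpha c^{\vY'}_\alpha(\vy;\tau)\epsilon^\alpha\). Because the exponents lie in a finite union of progressions, the functions \(\epsilon^\alpha\) are linearly independent on a sector, so I can compare coefficients. This yields \(c_\alpha(\vy;\gamma\tau)=\nu(\gamma)(c\tau+d)^{\sum\wt[u_i]+\alpha}c^{\vY'}_\alpha(\vy;\tau)\), which is the transformation law with representation \(\rho_{W_0}\) acting on the \(\vY\)-index. Here I also show that the kernel \(K_{\vlam}\) is invariant, as promised in \cref{rmkchains-vs-functions}: if \(\npf^{\vY}\equiv0\), then the left side of \eqref{eqnpf-modular} vanishes identically, hence so does \(\npf^{\vY'}\), so the action descends to \(C_n(\vlam)\). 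Meromorphy at the cusp (weak holomorphy) follows from the \(q\)-expansion of the traces \(\ptf^{\widetilde{\mathcal Y}}\) with exponents bounded below, using Step 1.

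\emph{Main obstacle.} The delicate point is branch bookkeeping in the comparison of coefficients in Step 2. The rescaling \(\epsilon\mapsto\epsilon/(c\tau+d)\) multiplies \(\epsilon\) by a non-real factor, so \(\epsilon^\alpha\) with \(\alpha\notin\ZZ\) must be continued consistently, and the path must not cross a collision divisor. To handle this, I would first treat \(\gamma\in\{S,T\}\) on a fixed simply connected sector containing the ray and its image. I would choose the branch of \((c\tau+d)^{-\alpha}\) compatible with the multiplier convention of \KSW{} (absorbing any discrepancy into \(\nu_{h_{W_0}}\) and \(\rho_{W_0}\)), and then extend to all of \(\SLTZ\) via the relations \(S^2=(ST)^3\), which hold up to the braid monodromy accounted for in \cref{rmkmonodromy}.
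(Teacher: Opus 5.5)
Your proposal is essentially the paper's own proof. The paper likewise gets modularity by substituting \(\zeta_i=\zeta_n+\epsilon y_i\) into \eqref{eqnpf-modular}, expanding both sides in \(\epsilon'=\epsilon/(c\tau+d)\) and \(\epsilon\), and matching coefficients of \(\epsilon^\alpha\) by linear independence on a fixed branch; it proves invariance of \(K_{\vlam}\) exactly as you do, and it obtains the one-point presentation by collapsing the chain through \eqref{eqiterate} and \cref{propfinite-channels} to a single \(\widetilde{\mathcal Y}\in\ityp{\tilde U}{W_0}{W_0}\). You only reverse the order of the two steps and are a bit more explicit than the paper about \(c^{\vY'}_\alpha\) on the right-hand side and the cusp condition, while your branch bookkeeping for \(\alpha\notin\ZZ\) is at the same heuristic level as the paper's ``deform \(\gamma\) to the identity'' prescription.
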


 Here above \(w\) is a vector obtained from \(u_1,\dots,u_n\) by iterated modes. 
 The coefficients lying in the localization 
\(\CC\bigl[y_1^{\pm1},\dots,y_{n-1}^{\pm1},(y_i-y_j)^{-1}\bigr]_{i\ne j}\) 
of the coordinate ring of the configuration space when \(A\subset\ZZ\) (in particular 
for the affine extremal channel of \cref{secsl2}), 
 and, in general, in the finite Puiseux algebra generated by the fractional powers 
occurring in the chosen fusion tree, together with their inverses, 
\emph{on the fixed branch} of \cref{thmhuang}. 
 Concretely, on 
\(\CC\bigl[y_1^{\pm1/D},\dots,y_{n-1}^{\pm1/D},(y_i-y_j)^{\pm1/D}\bigr]_{i\ne j}\) 
for a common denominator \(D\) clearing every exponent occurring along that tree, 
once a branch of each fractional power has been fixed. 
  Simultaneous fractional powers of every \(y_i-y_j\) 
are meaningful only relative to such a choice, exactly the one 
already fixed for \eqref{eqlaurent} itself.  

Note that the weight is a priori a rational number when \(\alpha\notin\ZZ\), exactly the 
phenomenon \KSW{} themselves already accommodate for the half-integral affine weights 
\(\wt[u_i]\in\tfrac12\ZZ\) occurring at \(n=1\).

We do not mean to suggest a canonical global algebra of fractional powers 
on \(\mathrm{Conf}_n(\Etor)\) independent of that choice), 
when \(A\not\subset\ZZ\), with poles or branch loci only along the collision divisors 
of \(\mathrm{Conf}_n(\Etor)\).
  The divisor \(y_i=0\) records collision of the \(i\)-th point with the chosen base point 
\(\zeta_n\) (already excluded, or given its correct fractional order, 
by the factor \(y_i^{\pm1/D}\)).  
 \(y_i=y_j\) (\(i,j<n\)) records collision between two non-base-point insertions 
(excluded, or given its fractional order, by the further localization at 
\((y_i-y_j)^{-1/D}\)). 
 Together these are exactly the pullback to the chosen local branch of the full 
collision locus \(\Delta\) of \cref{thmhuang}. 
In particular \(c_\alpha(y_1,\dots,y_{n-1};\,\cdot\,)\in V(\rho_{W_0})_\bullet\), 
the graded \(R\)-module of \KSW{} equation (3.1)-(3.2), specialized to \(\lambda\) 
an insertion label such that \(\tilde U\cong W_\lambda\) and re-graded by \(\alpha\in A\) 
in place of an integer index when \(A\not\subset\ZZ\).

We mention here, though it is only used later, that the specific 
presentation of \(c_\alpha\) as a linear combination of one-point functions 
\(\ptf^{\widetilde{\mathcal Y}}(w,\tau)\) depends on a choice of fusion tree.  
 I.e., it depends on the specific sequence of pairwise operator product expansions used 
to collapse the chain \(\iop1,\dots,\iop n\) to the single effective intertwiner 
\(\widetilde{\mathcal Y}\), exactly as the basis of \cref{sseccat-blocks} 
below depends on a choice of tree. Different trees give different, but equal, 
presentations, related by the associativity and braiding isomorphisms recalled in 
\cref{sseccrossing}.

\begin{proof}
\label{ssectaylor-proof}
\emph{Modularity.} Set \(\gamma=\bigl(\begin{smallmatrix}a&b\\c&d\end{smallmatrix}\bigr)\in\SLTZ\) and \(j(\gamma;\tau)=c\tau+d\). By \cref{proptranslation} and \cref{thmhuang} equation \eqref{eqnpf-modular} applied with \(\zeta_i\rightsquigarrow \zeta_n+\epsilon y_i\) for \(i<n\),
\[
F^{\vY}_{\vu}\Bigl(\frac{\epsilon y_1}{j(\gamma;\tau)},\dots,\frac{\epsilon y_{n-1}}{j(\gamma;\tau)};\ \gamma\tau\Bigr) = \nu_{h_{W_0}}(\gamma)\, j(\gamma;\tau)^{\sum_i\wt[u_i]}\, F^{\vY'}_{\vu}(\epsilon y_1,\dots,\epsilon y_{n-1};\tau)
\]
for some fusion chain \(\vY'\) of the same type. Writing \(\epsilon':=\epsilon/j(\gamma;\tau)\) and expanding both sides using \eqref{eqlaurent}, 
left side along \(\epsilon'\), on the branch continuously connected to the one fixed for \(\epsilon\) as \(\gamma\) is deformed to the identity, both sides being defined only after such a branch choice once \(A\not\subset\ZZ\), exactly as mentioned after \eqref{eqlaurent}:
\[
\sum_{\alpha\in A} c_\alpha(\vy;\gamma\tau)\, j(\gamma;\tau)^{-\alpha}\,\epsilon^\alpha = \nu_{h_{W_0}}(\gamma)\, j(\gamma;\tau)^{\sum_i\wt[u_i]}\sum_{\alpha\in A} c_\alpha(\vy;\tau)\,\epsilon^\alpha.
\]
Since \(y_1,\dots,y_{n-1}\) were arbitrary (subject only to genericity), matching coefficients of \(\epsilon^\alpha\)
 (valid because the exponents \(\alpha\in A\) are pairwise distinct rational numbers, 
thus the monomials \(\epsilon^\alpha\) remain linearly independent as germs at 
\(\epsilon=0\) on the fixed branch, exactly as ordinary integer powers are) 
 gives, for every \(\alpha\in A\),
\[
c_\alpha(\vy;\gamma\tau) = \nu_{h_{W_0}}(\gamma)\, j(\gamma;\tau)^{\sum_i\wt[u_i]+\alpha}\, c_\alpha(\vy;\tau),
\]
which is precisely the transformation law of \KSW{} equation (2.9)-(2.10) for weight 
\(\sum_i\wt[u_i]+\alpha\). 
 Gathering the components indexed by a basis of \(\Bl(\vlam)\) as in \KSW{} 
equation (2.41)-(2.43) produces the representation \(\rho_{W_0}\) 
and multiplier \(\nu_{h_{W_0}}\) exactly as in \emph{loc.\ cit.}, 
since these depend only on \(W_0\) and not on \(n\). 
  \(\rho_{W_0}\) is defined this way directly on the space of \emph{chains} \(\Bl(\vlam)\). 
 Since \eqref{eqnpf-modular} expresses the transformation of each individual function 
\(\npf^{\vY}\) (not just of the abstract label \(\vY\)) 
  as the same linear combination
 \(\npf^{\vY}(\gamma\cdot)=(\text{const})
\sum_a[\rho_{W_0}(\gamma)]_{a,\vY}\npf^{\vY_a}(\cdot)\) 
for every choice of \(\vu,\vzeta\), the kernel of the trace map 
\(\Bl(\vlam)\to C_n(\vlam)\) of \cref{defnblock} is automatically 
\(\rho_{W_0}(\gamma)\)-invariant for every \(\gamma\). 
 If \(\vec c\in\ker(\tr_{\vlam})\), i.e., \(\sum_a c_a\npf^{\vY_a}\equiv0\) identically in \(\vu,\vzeta,\tau\), then evaluating this identity at the \(\gamma\)-transformed point and substituting \eqref{eqnpf-modular} termwise shows \(\sum_a c_a[\rho_{W_0}(\gamma)]_{ba}\npf^{\vY_b}\equiv0\) too, i.e., \(\rho_{W_0}(\gamma)\vec c\in\ker(\tr_{\vlam})\) as well. Consequently \(\rho_{W_0}\) descends to, and for the purposes of the identification below should be understood as acting on, the quotient \(\Bl(\vlam)/\ker(\tr_{\vlam})\cong C_n(\vlam)\), 
 the space of actual functions rather than of formal chain labels. 
  That is, \(C_n(\vlam)\cong\Bl(\vlam)/K_{\vlam}\) \emph{as an \(\SLTZ\)-module}, 
not as a vector space, with the invariance of \(K_{\vlam}=\ker(\tr_{\vlam})\) 
just established being exactly what makes the quotient representation well defined.
  This proves the first assertion.  

 Note that no ``index'' correction of Cohen-Kuznetsov/Eichler-Zagier type \cite{EichZag} is needed, precisely because \cref{propelliptic} guarantees vanishing index (ordinary bi-periodicity rather than quasi-periodicity) in each \(x_i\).

\emph{Identification with \(V(\rho_{W_0})_\bullet\).} We use the same precise form of intertwining-operator associativity that underlies \cref{thmope-reduction} below, rather than a schematic binomial expansion. By crossing (Huang-Lepowsky, recalled in \cref{sseccrossing}), for \(u\in U_i\), \(v\in U_{i+1}\) and \(\abs{z-w}\) small,
\begin{equation}
\label{eqiterate}
\iop{i}(u,z)\iop{i+1}(v,w) \;=\; \sum_{\nu}\ 
\sum_{m\ge0}\ (z-w)^{\,h_\nu+m-h_{\lambda_i}-h_{\lambda_{i+1}}}\ 
\mathcal Y^{(2)}_{\nu,m}\bigl(\pi_{\nu,m}(u\times v),\,w\bigr).
\end{equation}
 The outer sum is over fusion channels \(W_\nu\) of \(U_i\fuse U_{i+1}\) 
and the inner sum is over descendant levels \(m\ge0\) of \(W_\nu\). 
 Here \(u\times v\) is the image of \(u\otimes v\) under a fixed choice of
 fusion intertwiner \(U_i\otimes U_{i+1}\to W_\nu\).
  \(\pi_{\nu,m}\) is the projection onto the level-\(m\) piece 
\((W_\nu)_{[h_\nu+m]}\), 
and \(\mathcal Y^{(2)}_{\nu,m}\in\ityp{W_\nu}{W_{i-1}}{W_{i+1}}\) 
is the further intertwining operator completing the chain. 
 This is exactly the expansion used in the proof of \cref{thmope-reduction} below, stated there in the case of leading order.

\begin{prop}[Finiteness at each order of \eqref{eqlaurent}]
\label{propfinite-channels}
For every fixed \(\alpha\in A\),
  in particular for every fixed exponent occurring in \eqref{eqiterate}, 
 only finitely many pairs \((\nu,m)\) contribute.
\end{prop}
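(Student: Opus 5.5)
The claim is that, at each fixed exponent \(\alpha\in A\) of \eqref{eqlaurent}, only finitely many pairs \((\nu,m)\) in the iterated expansion \eqref{eqiterate} contribute. I would reduce it to two finiteness facts: finitely many channels \(\nu\), and, for each \(\nu\), finitely many levels \(m\) producing a given exponent.

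First, the outer sum over \(\nu\) is finite. \(V\) is rational, so there are finitely many simple modules \(W_1,\dots,W_{d_V}\), and \(\nu\) ranges over a subset of these. Concretely, the channel \(W_\nu\) must satisfy \(\ispc{U_i}{U_{i+1}}{W_\nu}\neq0\), and the fusion rules of a rational \ctwo{} \voa{} are finite (\cref{ssecrecollections}).

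Second, for each fixed \(\nu\) the exponent \(h_\nu+m-h_{\lambda_i}-h_{\lambda_{i+1}}\) is strictly increasing in \(m\). So a prescribed exponent \(\beta\) is attained by at most one value \(m=\beta-h_\nu+h_{\lambda_i}+h_{\lambda_{i+1}}\), and only if this number is a non-negative integer. Each homogeneous piece \((W_\nu)_{[h_\nu+m]}\) is finite-dimensional, by \ctwo-cofiniteness, so the projection \(\pi_{\nu,m}\) lands in a finite-dimensional space. Hence each term \(\mathcal Y^{(2)}_{\nu,m}\) is one of finitely many terms per order.

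For general \(n\), I would fix a fusion tree with \(n-1\) successive fusions and induct along the tree. At the \(s\)-th fusion, the total exponent accumulated so far is a sum of exponents of the above shape, one per fusion. Every such summand is bounded below by \(\min_\nu(h_\nu)-h_{\lambda}-h_{\lambda'}\), a minimum over finitely many channels. A fixed total \(\alpha\) is therefore a sum of \(n-1\) terms, each bounded below and each lying in one of finitely many progressions \(\{\alpha_j+\ZZ_{\ge0}\}\). Fixing the total then bounds each individual level \(m_s\) above by \(\alpha\) minus the sum of the lower bounds of the other terms. Combined with the first step, only finitely many tuples \((\nu_1,m_1,\dots,\nu_{n-1},m_{n-1})\) contribute to a given \(\alpha\). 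In particular the coefficient \(c_\alpha\) is a finite sum, which is what the identification in \cref{thmtaylor} needs.

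The main obstacle is the uniformity of the lower bounds. When the intermediate insertions are descendants \(\pi_{\nu,m}(u\times v)\) rather than lowest-weight vectors, the next fusion exponent must still be bounded below independently of \(m\). To handle this I would write the exponent at later fusions in terms of the conformal weights \(h\) of the simple modules involved, not the actual weight of the inserted vector. The extra weight \(m\) is then recorded additively, and the subsequent OPE exponent is \(h_{\nu'}+m'-(h_\nu+m)-h_{\lambda}\). This is bounded below up to \(-m\), so the telescoping total is \(\sum_s m_s\) plus a constant depending only on the channels. With that bookkeeping, a fixed \(\alpha\) bounds \(\sum_s m_s\), and finiteness follows. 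I would check this telescoping carefully against \eqref{eqiterate}.
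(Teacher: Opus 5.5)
Your first two steps are the paper's proof. Rationality leaves finitely many channels $\nu$ with $N_{\lambda_i\lambda_{i+1}}^{\ \ \nu}\neq0$. Within one channel the exponent $h_\nu+m-h_{\lambda_i}-h_{\lambda_{i+1}}$ runs through a progression of difference $1$ in $m$, so a fixed exponent is hit by at most one $m$ per channel. No integrality of $\alpha$ is used. The finite-dimensionality of $(W_\nu)_{[h_\nu+m]}$ is not needed either, since the statement counts pairs, not vectors. The paper proves exactly this and nothing more: finiteness one fusion at a time.

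\textbf{Where the proposal fails: the full fusion tree.} Your extension to all $n-1$ fusions does not work, and your own bookkeeping in the last paragraph shows why. The intermediate vector of weight $h_{\nu_1}+m_1$ enters the next exponent with a minus sign, so the exponents telescope:
\[
\bigl(h_{\nu_1}+m_1-\wt(u_1)-\wt(u_2)\bigr)+\bigl(h_{\nu_2}+m_2-(h_{\nu_1}+m_1)-\wt(u_3)\bigr)=h_{\nu_2}+m_2-\wt(u_1)-\wt(u_2)-\wt(u_3).
\]
In general the total is $(h_{\nu_{n-1}}+m_{n-1})-\sum_i\wt(u_i)$. Fixing $\alpha$ therefore bounds only the final level, not $\sum_s m_s$; the intermediate levels $m_1,\dots,m_{n-2}$ drop out entirely.

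This is not a repairable slip: the multi-step finiteness you are aiming for is false. Take $n=3$ and three Cartan currents $u_i=h_{-1}\wun\in L(k,0)$, whose modes obey Heisenberg relations. The component of the iterated operator product along the final vector $h_{-1}\wun$ is proportional to
\[
(x_1-x_2)^{-2}+x_1^{-2}+x_2^{-2}=\epsilon^{-2}\bigl[(y_1-y_2)^{-2}+y_1^{-2}+y_2^{-2}\bigr],
\]
all at the single order $\epsilon^{-2}$. In the iterated region $|y_1-y_2|<|y_2|$ one has $y_1^{-2}=y_2^{-2}\sum_{j\ge0}(j+1)\bigl(-(y_1-y_2)/y_2\bigr)^j$. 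The $j$-th term comes from the intermediate vector $h_{-1-j}h_{-1}\wun$, at level $m_1=j+2$ of the vacuum channel, so infinitely many intermediate levels contribute at one fixed $\alpha$.

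The correct multi-step statement is weaker. At fixed $\alpha$ only finitely many final pairs $(\nu_{n-1},m_{n-1})$ occur, each accompanied by a convergent infinite sum over intermediate levels. You should drop the claim about tuples $(\nu_1,m_1,\dots,\nu_{n-1},m_{n-1})$ and keep the per-fusion statement, which is all the proposition asserts as proved.
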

\begin{proof}
Rationality gives finitely many simple modules, hence finitely many channels \(\nu\) with \(N_{\lambda_i\lambda_{i+1}}^{\ \ \nu}\ne0\). For each such \(\nu\) \emph{individually}, the exponent \(h_\nu+m-h_{\lambda_i}-h_{\lambda_{i+1}}\) is strictly increasing in \(m\in\NN_0\). 
 Thus at most one value of \(m\) achieves the target exponent \(\alpha\) 
\emph{within that channel}. 
 Different channels \(\nu\ne\nu'\) may of course each contribute their own 
(generally different) value of \(m\) at the same target exponent \(\alpha\).
 Thas is why the sum in \eqref{eqiterate} at fixed \(\alpha\) can have more than one term, 
 but never more than one per channel, and never more than finitely many channels,
 thus summing over the finitely many channels bounds the total number of contributing pairs 
\((\nu,m)\) by the number of channels. 
  Nothing in this argument uses that \(\alpha\) is an integer  
 (it only ever compares \(m\)-values \emph{within} a single channel, 
where the exponents \(h_\nu+m-h_{\lambda_i}-h_{\lambda_{i+1}}\), \(m\in\NN_0\), 
form an arithmetic progression of common difference \(1\) whatever the 
(generally non-integral) value of \(h_\nu-h_{\lambda_i}-h_{\lambda_{i+1}}\) itself)
  which is exactly why this finiteness statement survives
 unchanged in the general Puiseux setting of \eqref{eqlaurent}.
\end{proof}

Applying \eqref{eqiterate} repeatedly to the chain 
\(\iop1[u_1,\zeta_n+\epsilon y_1]\cdots\iop{n-1}[u_{n-1},\zeta_n
+\epsilon y_{n-1}]\iop n[u_n,\zeta_n]\) as \(\epsilon\to0\) collapses it, 
order by order in \(\epsilon\) 
(finitely many terms at each order by \cref{propfinite-channels}, 
applied at each of the \(n-1\) successive fusions in turn) 
 to a single effective intertwiner 
\(\widetilde{\mathcal Y}\in\ityp{\tilde U}{W_0}{W_0}\).  
  It is exactly of the shape \(\ityp{U}{W}{W}\) required for \KSW{} Definition 2.6.
The intertwiner acts on a vector \(w\) built from \(u_1,\dots,u_n\) 
 by iterated modes, with coefficients as described in \cref{thmtaylor}.  
 A fusion adjacent to the base point \(\zeta_n\) involves the separation \(x_i=\epsilon y_i\) 
alone and contributes a factor \(y_i^{\,h_\nu+m-h_{\lambda_i}-h_{\lambda_n}}\). 
 A fusion between two coordinates \(i,i'<n\) (both away from the base point) 
involves the separation \(x_i-x_{i'}=\epsilon(y_i-y_{i'})\) and contributes 
\((y_i-y_{i'})^{\,h_\nu+m-h_{\lambda_i}-h_{\lambda_{i'}}}\). 
  In \emph{either} case this exponent need be neither a non-negative integer nor, 
indeed, any integer at all, and we do not know of, and do not assert, 
any cancellation mechanism removing the resulting poles or fractional powers in general. 
 The coefficient ring is the localization, 
or its fractional extension when \(A\not\subset\ZZ\), stated in \cref{thmtaylor}. 
 The leading term (\(\alpha=\min A\)) is worked out explicitly in \cref{ssecleading} 
below for the affine \(\alg{sl}(2)\) extremal channel. 
 There every relevant exponent is checked directly to be a non-negative integer 
(\cref{propextremal-leading}). 
 The fusion tree used has every fusion adjacent to the base point, 
 so that the leading coefficient there is in fact an ordinary Laurent polynomial 
in the \(y_i\) alone. 
 Neither integrality nor this polynomiality need hold at general order \(\alpha\) or for general \(V\). The resulting \(\ptf^{\widetilde{\mathcal Y}}(w,\tau)\) lies, by definition, in \(V(\rho_{W_0})_{\wt[w]-h_{W_0}}\subset V(\rho_{W_0})_\bullet\), re-graded by \(\alpha\in A\) as already noted.
\end{proof}

\begin{cor}
\label{corcyclic-npoint}
If in addition the leading iterated-mode vector \(w\) appearing in \cref{thmtaylor} 
at order \(\alpha=\min A\) is torus primary with \(-\wt[w]\notin\NN_0\) 
(i.e., the weight of \(w\) is not a non-positive integer, ruling out exactly the 
low-weight cases for which \KSW's{} cyclic \(R\)-module construction requires 
separate treatment) 
 then by \KSW{} Proposition 3.3 the coefficient \(c_{\min A}(\vy;\,\cdot\,)\) 
lies in the cyclic \(R\)-module \(R\,\Psi_{\tilde\lambda}(w,\tau)\), 
where \(\tilde\lambda\) is the label with \(W_{\tilde\lambda}\cong\tilde U\). 
 If moreover the hypotheses of \KSW{} Theorem 3.5 hold for \(w\), then \(c_{\min A}(\vy;\,\cdot\,)\) lies in the explicit \(\eta\)-power multiple of \(H(\rho_{\tilde\lambda},\nu_{h_{\tilde\lambda}-12\mu_{\min}})\) identified there.
\end{cor}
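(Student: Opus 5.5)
The plan is to reduce the corollary to the $n=1$ statements of \KSW{} via the second assertion of \cref{thmtaylor}. First, fix a fusion tree and apply \cref{thmtaylor} at the lowest order $\alpha=\min A$. This presents $c_{\min A}(\vy;\tau)$ as a finite linear combination $\sum_s g_s(\vy)\,\ptf^{\widetilde{\mathcal Y}_s}(w,\tau)$. The coefficients $g_s$ lie in the (Puiseux-)localized coordinate ring and are independent of $\tau$. Each $\widetilde{\mathcal Y}_s$ lies in $\ityp{\tilde U}{W_0}{W_0}$, and all terms share the single leading iterated-mode vector $w$ of the hypothesis. Here I would check, using \cref{propfinite-channels}, that at the leading order the contributing pairs $(\nu,m)$ at each fusion have $m$ minimal in their channel. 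If several channels contribute, I would first restrict attention to the sum over channels yielding the same $\tilde U\cong W_{\tilde\lambda}$; the hypothesis that a single $w$ governs the leading term is what makes this legitimate. I would then regard $\ptf^{\widetilde{\mathcal Y}_s}(w,\tau)$ as components of the vector $\Psi_{\tilde\lambda}(w,\tau)$ of \KSW{} equation (2.41)--(2.43), indexed by a basis of $I_{\tilde\lambda}$.

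Second, since $w$ is torus primary for $W_{\tilde\lambda}$ and $-\wt[w]\notin\NN_0$, \KSW{} Proposition 3.3 applies verbatim. It shows that $\Psi_{\tilde\lambda}(w,\tau)$ generates a cyclic $R$-module containing every $\Psi_{\tilde\lambda}(w',\tau)$ with $w'$ in the relevant Virasoro span. In particular, each scalar component, and hence each $\CC$-linear combination with $\tau$-independent coefficients $g_s(\vy)$, lies in the image of $R\,\Psi_{\tilde\lambda}(w,\tau)$ under the corresponding linear functional on the vector-valued space. The key point is that the $g_s$ do not depend on $\tau$, so they commute with the modular differential operators in $R$. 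Consequently $c_{\min A}(\vy;\cdot)$ is obtained from an element of $R\,\Psi_{\tilde\lambda}(w,\tau)$ by a constant linear map. The first assertion of \cref{thmtaylor} guarantees that the weight and multiplier bookkeeping are consistent: weight $\wt[w]=\sum_i\wt[u_i]+\min A$, and multiplier $\nu_{h_{W_0}}$ compatible with the grading of $V(\rho_{W_0})_\bullet$.

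Third, for the last sentence, I would invoke \KSW{} Theorem 3.5 under its stated hypotheses on $w$. It identifies $R\,\Psi_{\tilde\lambda}(w,\tau)$ with an explicit $\eta$-power times $H(\rho_{\tilde\lambda},\nu_{h_{\tilde\lambda}-12\mu_{\min}})$. Membership of $c_{\min A}$ then follows from the inclusion established in the second step.

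I expect the main obstacle to be the first step. It requires matching the representation $\rho_{W_0}$ on chains, as in \cref{thmtaylor}, with the representation $\rho_{\tilde\lambda}$ on $I_{\tilde\lambda}$ used by \KSW. This means checking that the collapse map $\vY\mapsto\widetilde{\mathcal Y}$ at leading order is $\SLTZ$-equivariant. My first attempt would derive this equivariance from the fact that the transformation law \eqref{eqnpf-modular} and the OPE \eqref{eqiterate} commute at leading order in $\epsilon$, since both are $\tau$-linear operations on the same trace. Failing that, I would state the conclusion componentwise, which is all that membership in the cyclic module requires.
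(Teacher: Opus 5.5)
Your proposal is correct and follows the paper's route: the paper gives no separate argument, and obtains the corollary simply by feeding the second assertion of \cref{thmtaylor} into \KSW{} Proposition 3.3 and Theorem 3.5. That assertion presents $c_{\min A}$ as a $\tau$-independent combination of one-point functions $\ptf^{\widetilde{\mathcal Y}}(w,\tau)$. The equivariance step you flag as the main obstacle is not needed: $c_{\min A}$ is a fixed linear combination of components of $\Psi_{\tilde\lambda}(w,\tau)$, so membership in the componentwise sense you adopt is immediate. Compatibility of $\rho_{W_0}$ with $\rho_{\tilde\lambda}$, modulo linear relations among those components, then follows automatically by comparing the two transformation laws of the same functions.
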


\Cref{thmtaylor} and \cref{corcyclic-npoint} together say that the modular transformation of each coefficient (Laurent, or in general Puiseux) in a diagonal expansion of a genus-one \(n\)-point function is governed by the \(1\)-point \(R\)-module theory of \KSW{} \S3.
  This is more precise than saying the \emph{entire} modular content of the \(n\)-point function is so governed, since the complete function carries additional data beyond any single diagonal coefficient - the relative-position dependence packaging the \(y_i\) into the elliptic function of \cref{propelliptic}, the full fusion-channel structure, the operator product coefficients, analytic continuation between fusion regions, and the mapping-class-group action of \cref{seccategorical} - all of which we return to explicitly below. What is new for \(n\ge2\), within the scope of \cref{thmtaylor} itself, is (a) the elliptic dependence on the \(y_i\) mentioning the \emph{relative} positions, which packages the various orders \(\alpha\) and various fusion channels into a single function, and (b) the combinatorics of which channels \(\tilde U\) can appear, which we work out for affine \(\alg{sl}(2)\) in \cref{secsl2}.

\subsection{Crossing and the reduction of leading behaviour to the \(1\)-point theory}
\label{sseccrossing}

\Cref{thmtaylor} identifies the Laurent coefficients of \(\npf^{\vY}\) 
\emph{along the fixed fusion chain \(\vY\)}. 
 A finer question (needed already to identify \emph{which} channels 
\(\tilde U\) occur with non-zero coefficient, and hence the true order \(N\) 
of the pole) is answered by the crossing (associativity) property of intertwining operators, 
established for rational \ctwo \(V\) by Huang-Lepowsky and Huang 
\cite{HuaLog, Huang-Rigidity, HuaVer08}. 
 The fusion product \(U_i\fuse U_{i+1}\cong\bigoplus_\nu N_{\lambda_i\lambda_{i+1}}^{\ \ \nu} W_\nu\) is rigid, and the two ways of iterating the OPE of \(\iop i(u_i,z)\iop{i+1}(u_{i+1},w)\)
 (directly, as in \eqref{eqiterate}, or via first fusing \(U_i\) with \(U_{i+1}\) 
into each channel \(W_\nu\) and then applying a further intertwining operator 
\(W_\nu\otimes W_{i-1}\to W_{i+1}\)) 
  agree after analytic continuation. 
 Consequently, in the operator product \(\iop i(u_i,z)\iop{i+1}(u_{i+1},w)\), the contribution of channel \(W_\nu\) at descendant level \(m\) has the form \((z-w)^{h_\nu+m-h_{\lambda_i}-h_{\lambda_{i+1}}}u_{\nu,m}+\cdots\), \(u_{\nu,m}\in(W_\nu)_{h_\nu+m}\) (\cref{eqiterate} applied to the composite; OPE structure constants are absorbed into \(u_{\nu,m}\) itself).

\begin{defn}[The exponent set \(E\) and \(e_{\min}\)]
\label{defnEmin}
With \(u_{\nu,m}\) as just introduced, set \(E:=\set{(\nu,m): u_{\nu,m}\ne0}\) and, for each channel \(\nu\) with some \(u_{\nu,m}\ne0\), \(m_\nu:=\min\set{m\ge0: u_{\nu,m}\ne0}\). Define
\[
e_{\min} := \min_{\nu:\,\exists m,\,u_{\nu,m}\ne0}\bigl(h_\nu+m_\nu-h_{\lambda_i}-h_{\lambda_{i+1}}\bigr) = \min_{(\nu,m)\in E}\bigl(h_\nu+m-h_{\lambda_i}-h_{\lambda_{i+1}}\bigr).
\]
Rationality gives finitely many channels \(\nu\) with \(N_{\lambda_i\lambda_{i+1}}^{\ \ \nu}\ne0\), thus each \(m_\nu\), and hence \(e_{\min}\), is well defined (a well-ordering argument), even though \(E\) itself need not be finite (see \cref{rmkEmin-caveats}).
\end{defn}

\begin{thm}
\label{thmope-reduction}
Let \(\vY\) be a fusion chain of length \(n\) and \(u_i\in W_{\lambda_i}\), \(u_{i+1}\in W_{\lambda_{i+1}}\) homogeneous, with \(E,e_{\min}\) as in \cref{defnEmin}. The leading short-distance behaviour of \(\npf^{\vY}\) as \(\zeta_i\to\zeta_{i+1}\) is the finite sum of the genus-one one-point functions \(\ptf^{Y_\nu}(u_{\nu,m},\tau)\) over the pairs \((\nu,m)\in E\) attaining \(e_{\min}\) (several may tie).
  This holds, for any rational \ctwo{} \(V\). If, further, \(e_{\min}\) is attained at some \((\nu,0)\) with \(u_{\nu,0}\ne0\) torus primary in the sense of \KSW{} Definition 3.1, then \(\ptf^{Y_\nu}(u_{\nu,0},\tau)\) belongs to the corresponding \KSW{} cyclic \(R\)-module of \cref{corcyclic-npoint}.
\end{thm}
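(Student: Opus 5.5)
The plan is to substitute the OPE \eqref{eqiterate} directly into the trace \eqref{eqnpf-def}, with $\iop i[u_i,\zeta_i]\iop{i+1}[u_{i+1},\zeta_{i+1}]$ as the adjacent pair, and then read off the lowest power of $\zeta_i-\zeta_{i+1}$.

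\emph{Step 1: rewrite the trace.} By cyclicity of the trace, as in the proof of \cref{propelliptic}, move the adjacent pair to the front, so that $\npf^{\vY}=\tr_{W_{i-1}}\bigl(\iop i[u_i,\zeta_i]\,\iop{i+1}[u_{i+1},\zeta_{i+1}]\,\Phi\bigr)$. Here $\Phi$ is a fixed operator, independent of $\zeta_i$ and $\zeta_{i+1}$.

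\emph{Step 2: expand the pair.} Pass from round to square brackets. The change of coordinate $z=x(\zeta)-1$ is locally biholomorphic, with $z-w=2\pi i(\zeta_i-\zeta_{i+1})\bigl(1+O(\zeta_i-\zeta_{i+1})\bigr)$. Hence it only rescales each term and adds higher-order corrections; it does not change any leading exponent. Then apply \eqref{eqiterate}, in the square-bracket form guaranteed by the crossing property recalled in \cref{sseccrossing}. This gives
\[
\npf^{\vY}=\sum_{(\nu,m)}(\zeta_i-\zeta_{i+1})^{h_\nu+m-h_{\lambda_i}-h_{\lambda_{i+1}}}\,\tr\bigl(\mathcal Y^{(2)}_{\nu,m}[u_{\nu,m},\zeta_{i+1}]\,\Phi\bigr),
\]
which is valid on the fixed branch for small separation, by \cref{thmhuang}(ii). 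Each term on the right is a trace of a chain of length $n-1$.

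\emph{Step 3: isolate the leading term.} By \cref{propfinite-channels}, each exponent value receives only finitely many contributions. The terms with $u_{\nu,m}=0$ vanish identically. So the lowest exponent that appears is $e_{\min}$, and it is attained exactly on the pairs $(\nu,m)\in E$ realizing the minimum, of which there are finitely many. The coefficients of these terms form the leading behaviour.

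\emph{Step 4: reduce to one-point functions.} Iterate the same collapse along a fusion tree, exactly as in the proof of \cref{thmtaylor}. The remaining $n-2$ insertions are absorbed into an effective intertwiner $Y_\nu\in\ityp{W_\nu}{W_0}{W_0}$. By \cref{proptranslation}, the result no longer depends on $\zeta_{i+1}$, and it equals $\ptf^{Y_\nu}(u_{\nu,m},\tau)$, with the remaining dependence on relative positions absorbed into $u_{\nu,m}$ and $Y_\nu$. For $n=2$ this is immediate: $\Phi$ is just $q^{L[0]-c/24}$, and the trace is literally the one-point function of \KSW{} Definition 2.6.

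\emph{Step 5: the torus-primary case.} If the minimum is attained at some $(\nu,0)$ with $u_{\nu,0}$ torus primary, then \cref{corcyclic-npoint} applies directly, via \KSW{} Proposition 3.3. Note that $\wt[u_{\nu,0}]=h_\nu$ satisfies $-h_\nu\notin\NN_0$, or else $h_\nu=0$ and we are in the vacuum case.

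The main obstacle is the claim that ``leading'' is meaningful. If two distinct channels realize the same exponent, their contributions could cancel, and then the true leading order would be strictly higher than $e_{\min}$. I would handle this by stating the result for the coefficient of $(\zeta_i-\zeta_{i+1})^{e_{\min}}$ as a finite sum, as the theorem itself does, and not asserting that this sum is nonzero. The identification of exponents is then justified by the linear independence of distinct rational powers on the fixed branch, the same argument used in the proof of \cref{thmtaylor}.
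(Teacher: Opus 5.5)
Your outline follows the paper's argument, made more explicit. The paper's proof is your Step 3 (higher exponents are subdominant to $e_{\min}$, and \cref{propfinite-channels} gives finiteness at $e_{\min}$), followed by an appeal to \cref{thmtaylor} and \cref{corcyclic-npoint}, which are your Steps 4--5.

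Your cancellation caveat genuinely sharpens what the paper says. $E$ is defined by $u_{\nu,m}\ne0$, not by the resulting trace being non-zero, and such traces can vanish: for $n=2$ this happens whenever $u_{\nu,0}$ lies in a non-trivial $V_1$-isotypic component, by \KSW{} Proposition 2.8. So only the coefficient of $(\zeta_i-\zeta_{i+1})^{e_{\min}}$ can be asserted. One small point in Step 5: when $h_\nu=0$, the hypothesis $-\wt[w]\notin\NN_0$ of \cref{corcyclic-npoint} fails. Saying ``we are in the vacuum case'' therefore does not let you invoke that corollary, and that case needs its own remark. The paper does not check this hypothesis either.

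The step that fails as written is Step 4 for $n\ge3$. In the single limit $\zeta_i\to\zeta_{i+1}$ with the other points fixed, the coefficient of $(\zeta_i-\zeta_{i+1})^{e_{\min}}$ is a finite sum of traces $\tr\bigl(\mathcal Y^{(2)}_{\nu,m}[u_{\nu,m},\zeta_{i+1}]\,\Phi\bigr)$, that is, genus-one $(n-1)$-point functions. This is not a one-point function, for three reasons:
\begin{itemize}
\item \Cref{proptranslation} removes only one coordinate, so these traces still depend in general on the $n-2$ separations $\zeta_j-\zeta_{i+1}$.
\item The fused intertwiner has type $\ityp{W_\nu}{W_{i-1}}{W_{i+1}}$, which for $n\ge3$ is generally not of the one-point shape $\ityp{U}{W}{W}$.
\item $\Phi$ contains further intertwiners and does not preserve the grading, so all modes of $u_{\nu,m}$ contribute to the trace, not only the zero mode.
\end{itemize}
The separations cannot be ``absorbed'' into $u_{\nu,m}$, which is a fixed vector of $W_\nu$ determined by $u_i$ and $u_{i+1}$ alone. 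Collapsing the remaining insertions as in \cref{thmtaylor} requires additional limits $\zeta_j\to\zeta_{i+1}$, which are not part of the statement. It also produces one-point functions $\ptf^{\widetilde{\mathcal Y}}(w,\tau)$ with $w$ in a further fused module $\tilde U$ (a summand of $W_\nu$ fused with the remaining $U_j$, not $W_\nu$ itself), with coefficients depending on the remaining $y_j$.

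So your argument establishes the literal one-point identification only for $n=2$. There $\Phi=q^{L[0]-c/24}$ and $\mathcal Y^{(2)}_{\nu,m}\in\ityp{W_\nu}{W_0}{W_0}$. For $n\ge3$ it establishes the theorem only if $\ptf^{Y_\nu}(u_{\nu,m},\tau)$ is read as the $(n-1)$-point function carrying the fused insertion at $\zeta_{i+1}$. The paper's proof has the same weak point, since it simply cites \cref{thmtaylor} for this identification. You should either restrict the one-point claim to $n=2$ or state it in that reinterpreted form.
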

\begin{proof}
As \(z\to w\), every term \((z-w)^{h_\nu+m-h_{\lambda_i}-h_{\lambda_{i+1}}}\), \((\nu,m)\in E\), with exponent strictly greater than \(e_{\min}\) is subdominant to the terms attaining \(e_{\min}\) itself, by definition of \(e_{\min}\) as the minimum such exponent 
(this needs no finiteness statement about \(E\) as a whole). 
  What finiteness \emph{is} needed is at the leading order alone. 
 By \cref{propfinite-channels}, only finitely many pairs \((\nu,m)\in E\) realize the exact value \(e_{\min}\), thus the leading short-distance behaviour is a finite sum. Together, subdominance of every higher exponent and finiteness exactly at \(e_{\min}\) give the first claim. For the second: \emph{if} \(e_{\min}\) is attained at \((\nu,0)\) with \(u_{\nu,0}\) torus primary, \cref{thmtaylor} identifies the resulting leading coefficient of the collapsed chain as exactly \(\ptf^{Y_\nu}(u_{\nu,0},\tau)\), which lies in the stated \(R\)-module by \cref{corcyclic-npoint}.
  This specializes to the explicit affine \(\alg{sl}(2)\) computation of \KSW{} Theorem 4.2 once \(V=L(k,0)\) and \(\nu,u_{\nu,0}\) are identified explicitly, as in \cref{secsl2} below.
\end{proof}

\begin{rmk}
\label{rmkEmin-caveats}
We do not assert that \(E\) is finite. Indeed, it generally is not, since a fixed channel 
\(\nu\) can have \(u_{\nu,m}\ne0\) for infinitely many \(m\) (\cref{propfinite-channels} 
bounds, for each \emph{fixed} exponent, how many pairs realize it, which is different from, 
and weaker than, bounding \(\abs E\)). 
 Nor need \(e_{\min}\) be attained at \(m=0\) or at the channel of smallest \(h_\nu\). 
 The lowest-weight projection \(u_{\nu,0}\) can vanish in a channel of small \(h_\nu\) 
while a different channel \(\nu'\) of larger \(h_{\nu'}\) has \(u_{\nu',0}\ne0\), 
with no a priori relation forcing one comparison to beat the other. 
Thus identifying \(e_{\min}\) in general requires comparing the leading term of 
\emph{every} channel in \(E\). 
 Within one fixed channel, \(m=0\) is of course always smallest,  
thus a non-zero \(u_{\nu,0}\) bounds \(e_{\min}\) from above via that channel alone, 
but does not by itself identify it. 
 Membership of \((\nu,0)\) in \(E\) does not by itself make \(u_{\nu,0}\) torus primary. 
 Lying in the lowest weight space of \(W_\nu\) makes the vanishing-on-\(N(u_{\nu,0})\)
 half of torus-primarity automatic, but non-vanishing of 
\(\ptf^{Y_\nu}(u_{\nu,0},\tau)\) itself is a condition on the specific 
vector and intertwiner, established case by case in \cref{secsl2} exactly as \KSW{} 
establish it for their own vectors (their Lemma 4.3-4.4), not claimed in general here.
 We also do not address the further question of an \(R\)-module home for 
\(\ptf^{Y_\nu}(u_{\nu,m},\tau)\) when every minimizing pair has \(m>0\).
  \KSW's{} machinery is built around lowest-weight vectors, and extending it to descendants is left open. For \(V=L(k,0)\) from \cref{secsl2} on, the extremal application in \cref{propextremal-leading} needs only the first claim together with \(L(k,k)\fuse L(k,k)=L(k,0)\) having a single fusion channel (the vacuum, \(\nu=0\)),thus \(e_{\min}\) is trivially attained at \(m=0\) with nothing to compare against and the vacuum vector is 
torus primary. It does not depend on the general multi-channel comparison above.
\end{rmk}

We mention \cref{thmope-reduction} here in the generality of \S3.
  \cref{secsl2} carries it out explicitly for \(n=2\) and \(V=L(k,0)\).

%%%%%%%%%%%%%%%%%%%%%%%%%%%%%%%%%%%%%%%%%%%%%%%%%%%%%%%%%%%%%%%%%%%%%%%%%%%
\section{Affine \(\alg{sl}(2)\): two-point functions}
\label{secsl2}

We now specialize to \(V=L(k,0)\), the simple affine vertex operator algebra of
 \(\alg{sl}(2)\) at non-negative integral level \(k\), in the notation of \KSW{} \S4: 
 Chevalley basis \(e,h,f\), central charge \(c=3k/(k+2)\), simple modules 
\(L(k,\mu)\) for \(0\le\mu\le k\) with conformal weight \(h_\mu=\mu(\mu+2)/(4(k+2))\), and fusion rule
\[
N^\nu_{\lambda\mu} = \begin{cases} 1 & \abs{\lambda-\mu}\le\nu\le\min\set{\lambda+\mu,\,2k-\lambda-\mu},\ \lambda+\mu+\nu\ \text{even},\\ 0 &\text{otherwise}.\end{cases}
\]
For \(0\le\lambda\le k\), \KSW{} Proposition 4.1 identifies \(\intset_\lambda=\set{\mu\st\lambda/2\le\mu\le k-\lambda/2}\) (\(\lambda\) even; \(\intset_\lambda=\emptyset\) for \(\lambda\) odd), of cardinality \(k-\lambda+1\).

\subsection{Two-point fusion chains}
\label{ssectwopoint-fusion}

Fix insertion labels \(0\le\lambda_1,\lambda_2\le k\). A two-point fusion chain of type \((\lambda_1,\lambda_2)\) is a pair \((\mu_0,\mu_1)\) with \(N^{\mu_1}_{\lambda_1\mu_0}=N^{\mu_0}_{\lambda_2\mu_1}=1\).
  We write \(\iop1\in\ityp{L(k,\lambda_1)}{L(k,\mu_0)}{L(k,\mu_1)}\), \(\iop2\in\ityp{L(k,\lambda_2)}{L(k,\mu_1)}{L(k,\mu_0)}\) for the (unique up to scale) corresponding intertwining operators, so that
\[
\Bl(\lambda_1,\lambda_2) = \bigoplus_{(\mu_0,\mu_1)\ \text{a chain}} \CC,\qquad \dim\Bl(\lambda_1,\lambda_2) = \#\set{(\mu_0,\mu_1)\ \text{a chain of type } (\lambda_1,\lambda_2)}.
\]

\begin{lem}[Parity obstruction]
\label{lemparity}
A two-point fusion chain of type \((\lambda_1,\lambda_2)\) exists only if 
\(\lambda_1+\lambda_2\) is even. 
 More generally, an \(n\)-point fusion chain of type \((\lambda_1,\dots,\lambda_n)\) (\cref{eqblock-def}) exists only if \(\lambda_1+\cdots+\lambda_n\) is even.
\end{lem}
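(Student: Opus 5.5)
The plan is to read the parity constraint directly off the fusion rule \(N^\nu_{\lambda\mu}\) recalled at the start of \cref{secsl2}, which is nonzero only when \(\lambda+\mu+\nu\) is even. The existence of a fusion chain is purely combinatorial: a chain of type \((\lambda_1,\dots,\lambda_n)\) is a cyclic sequence of channels \(\mu_0,\mu_1,\dots,\mu_{n-1},\mu_n=\mu_0\) with \(N^{\mu_i}_{\lambda_i\mu_{i-1}}=1\) for each \(i=1,\dots,n\), indices modulo \(n\). This is exactly the condition for the sector \(\Bl_{\vmu}(\vlam)\) in \eqref{eqblock-def} to be nonzero, since the intertwining spaces are one-dimensional or zero according to the fusion rules. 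So the lemma reduces to a statement about these integers, and no analytic input (such as \cref{thmhuang}) is needed.

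The key step is the summation. Each nonzero factor gives the congruence \(\lambda_i+\mu_{i-1}+\mu_i\equiv 0 \pmod 2\). Summing over \(i=1,\dots,n\) gives
\[
\sum_{i=1}^n\lambda_i+\sum_{i=1}^n\mu_{i-1}+\sum_{i=1}^n\mu_i\equiv0\pmod 2.
\]
Because the chain is cyclic (\(\mu_n=\mu_0\)), the two channel sums are the same sum \(\sum_{j=0}^{n-1}\mu_j\). Their total is therefore \(2\sum_j\mu_j\), which is even, and hence \(\sum_i\lambda_i\) is even. The case \(n=2\) is the specialization: adding \(\lambda_1+\mu_0+\mu_1\equiv0\) and \(\lambda_2+\mu_1+\mu_0\equiv0\) gives \(\lambda_1+\lambda_2\equiv0\). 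As a consistency check, the case \(n=1\) recovers \KSW{}'s statement that \(\intset_\lambda=\emptyset\) for odd \(\lambda\).

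I do not expect a real obstacle here. The only point that needs care is the bookkeeping convention. The intertwiner \(\iop i\) has type \(\ityp{U_i}{W_{i-1}}{W_i}\), so the relevant fusion coefficient is \(N^{\mu_i}_{\lambda_i\mu_{i-1}}\). I should also note that the parity condition in \(N\) is symmetric in all three labels, so neither the ordering of the lower indices nor the replacement of \(W_\nu\) by its dual (which for \(\alg{sl}(2)\) is \(W_\nu\) itself) affects the congruence. The other constraints in the fusion rule, namely the triangle inequality and the level truncation \(\nu\le 2k-\lambda-\mu\), are not used. I would remark that this is why the lemma gives only a necessary condition.
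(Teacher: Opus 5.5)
Your proposal is correct and matches the paper's proof: each nonzero coefficient \(N^{\mu_i}_{\lambda_i\mu_{i-1}}\) forces \(\lambda_i+\mu_{i-1}+\mu_i\) to be even. Summing around the closed cycle counts each channel label \(\mu_j\) exactly twice, so \(\sum_i\lambda_i\) must be even. Your side remarks are accurate: the \(n=1\) case gives \(\Xi_\lambda=\emptyset\) for odd \(\lambda\), and only the parity clause of the fusion rule is used.
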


\begin{proof}
Each of the \(n\) fusion conditions \(N^{\mu_i}_{\lambda_i\mu_{i-1}}=1\) forces \(\lambda_i+\mu_{i-1}+\mu_i\) even. Summing over \(i=1,\dots,n\) (indices mod \(n\)), each \(\mu_j\) occurs exactly twice, thus \(\sum_i\lambda_i + 2\sum_j\mu_j\) is even, forcing \(\sum_i\lambda_i\) even.
\end{proof}

\begin{prop}[Verlinde-type dimension count]
\label{propfusionchain}
Recall the character \(S\)-matrix of \KSW{} equation (6.16), \(S_{ij}=\sqrt{2/(k+2)}\,\sin\bigl(\pi(i+1)(j+1)/(k+2)\bigr)\), \(0\le i,j\le k\). For every \(\lambda_1,\dots,\lambda_n\in\set{0,\dots,k}\) (both sides automatically vanishing when \(\lambda_1+\cdots+\lambda_n\) is odd, by \cref{lemparity}; see the proof),
\begin{equation}
\label{eqverlinde-dim}
\dim\Bl(\lambda_1,\dots,\lambda_n) = \sum_{j=0}^{k} \frac{\prod_{i=1}^n S_{\lambda_ij}}{S_{0j}^{\,n}}.
\end{equation}
In particular, for \(n=2\) and \(\lambda_1=\lambda_2=k\) (so \(k\) is automatically even by \cref{lemparity}), \(\dim\Bl(k,k)=k+1\), realized by the chains \((\mu_0,\mu_1)=(\mu_0,k-\mu_0)\), \(\mu_0=0,1,\dots,k\).
\end{prop}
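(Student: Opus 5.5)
The plan is to rewrite the left side of \eqref{eqverlinde-dim} as the trace of a product of fusion matrices, and then diagonalize those matrices with the \(S\)-matrix via the Verlinde formula.

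\emph{Step 1: reduce to a trace.} For \(V=L(k,0)\) every intertwining space \(\ispc{W_\lambda}{W_\mu}{W_\nu}\) has dimension \(N^\nu_{\lambda\mu}\in\set{0,1}\), by the fusion rule recalled at the start of \cref{secsl2}. Taking dimensions in \eqref{eqblock-def} sector by sector therefore gives
\[
\dim\Bl(\lambda_1,\dots,\lambda_n)=\sum_{\mu_0,\dots,\mu_{n-1}}\prod_{i=1}^n N^{\mu_i}_{\lambda_i\mu_{i-1}}=\tr\bigl(N_{\lambda_1}N_{\lambda_2}\cdots N_{\lambda_n}\bigr),
\]
with indices read cyclically (\(\mu_n=\mu_0\)). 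Here \(N_\lambda\) is the \((k+1)\times(k+1)\) fusion matrix \((N_\lambda)_{\mu\nu}:=N^\nu_{\lambda\mu}\). This step is purely combinatorial and holds for any rational \ctwo{} \(V\), provided one uses the general multiplicities \(N^\nu_{\lambda\mu}=\dim\ispc{W_\lambda}{W_\mu}{W_\nu}\).

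\emph{Step 2: the Verlinde formula.} Next I will invoke the Verlinde formula for \(L(k,0)\):
\[
N^\nu_{\lambda\mu}=\sum_j \frac{S_{\lambda j}S_{\mu j}S_{\nu j}}{S_{0j}}.
\]
This holds for rational \ctwo{} \(V\) by Huang's theorem \cite{HuaVer08}. For \(\alg{sl}(2)\) it can also be checked directly from the truncated Clebsch--Gordan identity
\[
\frac{\sin((\lambda+1)x)\,\sin((\mu+1)x)}{\sin x}=\sum_{\nu=\abs{\lambda-\mu},\ \mathrm{step}\ 2}^{\lambda+\mu}\sin((\nu+1)x),
\]
evaluated at \(x=\pi(j+1)/(k+2)\). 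At these points the terms with \(\nu>2k-\lambda-\mu\) cancel in pairs against their reflections \(\nu\mapsto 2k-\nu\), because \(\sin\) vanishes at \(\nu=k+1\) and the terms are antisymmetric under that reflection. The remaining terms then give the displayed formula by orthogonality of the real symmetric orthogonal matrix \(S\).

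The Verlinde formula says precisely that \(S\) simultaneously diagonalizes all the \(N_\lambda\):
\[
N_\lambda=S\,D_\lambda\,S^{-1},\qquad D_\lambda=\diag\bigl(S_{\lambda j}/S_{0j}\bigr)_j .
\]
The trace in Step 1 is then \(\tr(D_{\lambda_1}\cdots D_{\lambda_n})\), which is exactly the right side of \eqref{eqverlinde-dim}. I expect this step to be the main obstacle, specifically the truncation and cancellation bookkeeping at the level-\(k\) boundary. My first choice is to cite \cite{HuaVer08}, and to give the trigonometric check only as confirmation.

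\emph{Step 3: parity and the example.} When \(\sum_i\lambda_i\) is odd, the left side vanishes by \cref{lemparity}. The right side vanishes independently, because \(S_{\lambda,k-j}=(-1)^\lambda S_{\lambda j}\): pairing the summand \(j\) with \(k-j\) multiplies it by \((-1)^{\sum_i\lambda_i}=-1\).

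For \(n=2\) and \(\lambda_1=\lambda_2=k\), the fusion rule gives \(N^\nu_{k\mu}=\delta_{\nu,k-\mu}\), so \(N_k\) is the permutation matrix of the involution \(\mu\mapsto k-\mu\). Then \(N_k^2=I\) and \(\tr(N_k^2)=k+1\). The surviving sectors are exactly the pairs \((\mu_0,k-\mu_0)\), \(\mu_0=0,\dots,k\); this matches the right side via \(S_{kj}^2=S_{0j}^2\).
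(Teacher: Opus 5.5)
Your proof is correct and is essentially the paper's argument: writing $\dim\Bl(\lambda_1,\dots,\lambda_n)=\tr\bigl(N_{\lambda_1}\cdots N_{\lambda_n}\bigr)$ and diagonalizing via $N_\lambda=SD_\lambda S^{-1}$ is the matrix form of the paper's step of substituting the Verlinde formula and telescoping with the orthogonality of $S$. Your $j\mapsto k-j$ antisymmetry also gives an independent check that the right side vanishes in the odd case, which the paper only infers from the identity itself, and you rightly do not try to derive the statement's parenthetical ``$k$ automatically even'', which is false: $\lambda_1+\lambda_2=2k$ is always even, and for instance $\dim\Bl(1,1)=2$ at $k=1$. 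The one slip is in your optional trigonometric check: at $x=\pi(j+1)/(k+2)$ the antisymmetric reflection is $\nu\mapsto 2k+2-\nu$ (fixed point $\nu=k+1$, pairing $\lambda+\mu$ with $2k+2-\lambda-\mu$), not $\nu\mapsto 2k-\nu$.
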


\begin{proof}
This is the \(g=1\) case of the Verlinde formula \(\dim V_g(\lambda_1,\dots,\lambda_n)=\sum_j S_{0j}^{2-2g-n}\prod_iS_{\lambda_ij}\) for the modular tensor category \(\rep{L(k,0)}\) \cite{Verlinde88,MSMTC1089}, which we re-derive from first principles in \cref{seccategorical} (\cref{propverlinde}).
  Specializing that derivation to \(\rep{L(k,0)}\) and using the Verlinde formula for the fusion coefficients \(N^\nu_{\lambda\mu}=\sum_jS_{\lambda j}S_{\mu j}S_{\nu j}/S_{0j}\) (valid since \(S\) is real symmetric for \(\rep{L(k,0)}\)) to expand \eqref{eqblock-def} gives \eqref{eqverlinde-dim} directly. 
 Summing \(\prod_{i=1}^n N^{\mu_i}_{\lambda_i\mu_{i-1}}\) over \(\mu_0,\dots,\mu_{n-1}\) telescopes via the column-orthogonality relation \(\sum_\mu S_{\mu j}S_{\mu j'}=\delta_{jj'}\), exactly as verified directly for \(n=1\) in \KSW{} (their identity \(\dim(I_\lambda)=\sum_\mu N^\mu_{\lambda\mu}=\sum_jS_{\lambda j}/S_{0j}\), which is \eqref{eqverlinde-dim} at \(n=1\)). When \(\lambda_1+\cdots+\lambda_n\) is odd, each factor \(N^{\mu_i}_{\lambda_i\mu_{i-1}}\) in the telescoped product is individually built from fusion coefficients that vanish unless \(\lambda_i+\mu_{i-1}+\mu_i\) is even, thus summing the parity constraint around the closed cycle \(\mu_0\to\mu_1\to\cdots\to\mu_{n-1}\to\mu_0\) forces \(\sum_i\lambda_i\) even for \emph{any} term to survive.
  Since it does not, \emph{every} term in the sum defining \(\dim\Bl(\vlam)\) vanishes, giving \(0\) on the left, matching \(\dim\Bl(\vlam)=0\) directly from \cref{lemparity}.
  And the same telescoped-sum computation, run through the \(S\)-matrix expansion instead, shows the right side of \eqref{eqverlinde-dim} vanishes too in this case, since it is literally the same sum re-expressed.

For the extremal case, the identity \(\sin\bigl(\tfrac{\pi(k+1)(j+1)}{k+2}\bigr)=(-1)^j\sin\bigl(\tfrac{\pi(j+1)}{k+2}\bigr)\) gives \(S_{kj}=(-1)^jS_{0j}\), thus \(S_{kj}^2/S_{0j}^2=1\) for every \(j\), and \eqref{eqverlinde-dim} collapses to \(\sum_{j=0}^kS_{0j}^{2}/S_{0j}^2=k+1\). 
 Explicitly: the fusion rule with \(\lambda=k\) forces \(\nu=k-\mu\) uniquely (since \(\abs{k-\mu}=k-\mu\) and \(\min\set{k+\mu,2k-k-\mu}=k-\mu\) collide), thus \(N^{\mu_1}_{k\mu_0}=1\) iff \(\mu_1=k-\mu_0\), and then automatically \(N^{\mu_0}_{k\mu_1}=1\) iff \(\mu_0=k-\mu_1=\mu_0\), which always holds.
  Hence every \(\mu_0\in\set{0,\dots,k}\) extends uniquely to a chain.
\end{proof}

The Verlinde formula itself, \eqref{eqverlinde-dim}, is of course classical 
\cite{Verlinde88, MSMTC1089}. What is new in \cref{propfusionchain} is its application to \emph{fusion chains} (the telescoped product \(\prod_iN^{\mu_i}_{\lambda_i\mu_{i-1}}\) of \eqref{eqblock-def} rather than a single fusion coefficient or a single-point trace), 
 the resulting identification of \(\dim\Bl(\vlam)\) with the dimension of the \(n\)-point conformal block space of \cref{defnblock} specifically, and the closed-form extremal count \(\dim\Bl(k,k)=k+1\) with its explicit labelling by chains \((\mu_0,k-\mu_0)\). 
 Neither of which is available by simply quoting the classical formula. 
The parity fact used in the statement above 
(that both sides of \eqref{eqverlinde-dim} vanish together when 
\(\lambda_1+\cdots+\lambda_n\) is odd, thus no separate parity hypothesis is needed)
  follows because each \(N^\nu_{\lambda\mu}\) forces \(\lambda+\mu+\nu\) even, propagating the same parity obstruction through the telescoped sum on the right exactly as \cref{lemparity} shows it vanishes the fusion-chain space on the left.
  We verify this explicitly in the proof below rather than asserting it from the form of \(S_{ij}\) alone.

\subsection{Torus bi-primary vectors}
\label{ssecbiprimary}

\begin{defn}[The kernel \(K_i\)]
\label{defnKi}
For \(u\in W_{\lambda_i}\), write \(u\in K_i\) if \(\npf^{\vY}(\dots,u,\dots;\tau)=0\) for every fusion chain \(\vY\) and every choice of the remaining insertions. Since any vector defined via an intertwining operator is well defined only up to an overall non-zero scalar multiple of the normalization implicit in \(\iop{}\), and \(K_i\) may in general contain vectors not proportional to a given \(u_i\), uniqueness statements below are understood modulo \(K_i\),
 a coarser equivalence than just scalar rescaling.
\end{defn}

\begin{thm}[Torus bi-primary vectors]
\label{thmbiprimary}
Let \((\mu_0,\mu_1)\) be a two-point fusion chain of type \((\lambda_1,\lambda_2)\), both \(\lambda_1,\lambda_2\) even, and let
\[
u_1 \in f_{-1}^{\lambda_1/2}\ket{\lambda_1} + \bigoplus_{n=0}^{\lambda_1/2-1}L(k,\lambda_1)_{h_{\lambda_1}+n},\qquad u_2\in f_{-1}^{\lambda_2/2}\ket{\lambda_2}+\bigoplus_{n=0}^{\lambda_2/2-1}L(k,\lambda_2)_{h_{\lambda_2}+n}
\]
be torus primary vectors of \KSW{} Theorem 4.2 for \(L(k,\lambda_1)\), respectively \(L(k,\lambda_2)\). Then \((u_1,u_2)\) is a \emph{candidate} torus \(2\)-point primary tuple of weight \(\wt(u_1)=h_{\lambda_1}+\lambda_1/2\), \(\wt(u_2)=h_{\lambda_2}+\lambda_2/2\), unique modulo \(K_1\times K_2\) (\cref{defnKi}) among candidate tuples of this bidegree. 
 It is an (non-vanishing) torus \(2\)-point primary tuple in the sense of \cref{defnnpoint-primary} once so verified, as it is for the extremal case in \cref{ssecleading} below via \cref{propextremal-leading}.
\end{thm}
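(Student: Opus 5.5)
The proof works coordinate by coordinate and reduces everything to the \(1\)-point theory of \KSW{} \S4. First, \cref{defnnpoint-primary} defines torus \(2\)-point primarity purely coordinatewise: each \(u_i\) must be torus primary for \(W_{\lambda_i}\) in the sense of \KSW{} Definition 3.1. So the candidate tuple is obtained simply by pairing the \KSW{} Theorem 4.2 vectors. The weight claims are immediate. The leading component \(f_{-1}^{\lambda_i/2}\ket{\lambda_i}\) has round-bracket weight \(h_{\lambda_i}+\lambda_i/2\). The remaining components lie in strictly lower homogeneous pieces, and by the Remark on \(\wt(u)\) versus \(\wt[u]\) they are absorbed into the square-bracket grading. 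I would note that \(u_i\) is, by construction in \KSW{}, an \(L[0]\)-eigenvector, so the bidegree is \((h_{\lambda_1}+\lambda_1/2,\,h_{\lambda_2}+\lambda_2/2)\).

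Second, I would show that no vector of lower bidegree can serve. Following the strategy announced in the proof of \cref{propcoordwise}, I would bypass the Ward identity and argue directly at each coordinate. Fix coordinate \(i\) and write \(v\) for a candidate insertion there. The first step is to show that a non-trivial \(\alg{sl}(2)\)-isotypic component of \(v\) contributes nothing to the trace. For this I would apply \cref{lemsamepoint} with the other insertion \(u_j\) extremal. Then \(a_0u_j\) is proportional to \(u_j\) for \(a=h\), which gives \((\text{weight of } v + \text{weight of } u_j)\,\npf=0\). For \(a=e,f\), I would use the weight grading of the trace over \(W_0\) to kill the mixed terms. Combining the two, only the \(V_1\)-trivial part of each \(u_i\) can contribute, up to \(K_i\). \KSW{} Lemma 4.3 then says the trivial \(\alg{sl}(2)\)-module first occurs in \(L(k,\lambda_i)\) at weight \(h_{\lambda_i}+\lambda_i/2\), spanned modulo lower pieces by \(f_{-1}^{\lambda_i/2}\ket{\lambda_i}\) plus corrections. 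This gives both the minimality of the bidegree and the uniqueness of the candidate modulo \(K_i\).

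Third, uniqueness for the tuple follows by taking the product of the two coordinatewise uniqueness statements modulo \(K_1\times K_2\). \Cref{propnpoint-Lemma32} handles descendants: any other homogeneous vector of the same weight in \(U(L)u_i\) reduces, with quasi-modular coefficients, to \(\Vir(u_i)\). At this lowest weight, that reduction leaves only multiples of \(u_i\) modulo \(K_i\). The word \emph{candidate} is deliberate: non-vanishing of \(\npf^{\vY}(u_1,\zeta_1;u_2,\zeta_2;\tau)\) is not claimed. It is deferred to \cref{propextremal-leading}, which goes via \cref{thmope-reduction} and the single vacuum channel of \(L(k,k)\fuse L(k,k)\).

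The main obstacle is the second step, namely decoupling the two-term Ward identity so that a non-trivial component at one coordinate vanishes on its own. The \(h_0\)-eigenvalue argument works cleanly only when the other insertion is an extremal weight vector. For general torus primary \(u_j\), which carries lower-weight corrections, I would first reduce to its leading extremal term. If that reduction fails, I would weaken the uniqueness claim to hold modulo \(K_i\) as defined, which by \cref{defnKi} is exactly what the statement requires.
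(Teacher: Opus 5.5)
Your first step is correct and is in fact the whole of the paper's argument for the primarity and weight claims. \cref{defnnpoint-primary} is coordinatewise, and each \(u_i\) is torus primary for \(W_{\lambda_i}\) by \KSW{} Lemma 4.3 and Lemma 4.4(1), or directly by hypothesis. The weight \(h_{\lambda_i}+\lambda_i/2\) is where \KSW{} Lemma 4.3 places the first trivial \(\alg{sl}(2)\)-summand, and your reading of \emph{candidate} also matches the paper.

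The gap is your second step. You use \cref{lemsamepoint}, with the other insertion held fixed, to show that non-trivial \(\alg{sl}(2)\)-isotypic components at coordinate \(i\) lie in \(K_i\). This cannot work, even granting the unexplained \(e,f\) step. Membership in \(K_i\) (\cref{defnKi}) requires vanishing for every chain and \emph{every} other insertion, whereas with \(u_j\) fixed the Ward identity only constrains traces against that one \(u_j\).

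Worse, the conclusion is false. Your relation \((m_v+m_{u_j})\npf=0\) is just \(h_0\)-charge conservation, so it says nothing when the charges cancel, and such pairs generally have non-zero traces. For \(\lambda_1=\lambda_2=0\), a chain \((\mu,\mu)\) and \(k\ge1\), the two-point function of \(e_{-1}\wun\) and \(f_{-1}\wun\) has leading singularity a non-zero multiple of \(k\,x^{-2}\ch{L(k,\mu)}(\tau)\). Yet \(e_{-1}\wun\) lies in the adjoint summand of \(V_1\). Similarly, the lowest-weight vectors \(\ket{k}\) and \(f_0^k\ket{k}\) of \(L(k,k)\) have weight strictly below the \KSW{} vector. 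By the identity-channel argument of \cref{propextremal-leading}, they pair non-trivially into the vacuum channel of \cref{lemkk-fusion}.

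So neither ``only the \(V_1\)-trivial part contributes, up to \(K_i\)'' nor ``no vector of lower bidegree can serve'' holds for two-point traces. Your fallback of weakening to ``modulo \(K_i\)'' does not help, because proving that the discarded pieces lie in \(K_i\) is exactly what fails.

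The missing observation is that no two-point input is needed. Candidate tuples are defined coordinatewise through \KSW{} Definition 3.1. Lower weights are therefore excluded only in the one-point sense: they carry no trivial \(\alg{sl}(2)\)-summand by \KSW{} Lemma 4.3, so their \(1\)-point functions vanish by \KSW{} Proposition 2.8. Uniqueness is likewise the one-point uniqueness of \KSW{} Theorem 4.2(2), applied at each coordinate separately, which is exactly how the paper concludes.

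The paper's proof of this theorem deliberately does not invoke \cref{lemsamepoint} or \cref{propcoordwise}. The reason is that their sum rule cannot be split into individually vanishing terms, whatever the remark about extremal partners in the proof of \cref{propcoordwise} suggests. Your third step via \cref{propnpoint-Lemma32} is then also unnecessary. In any case it yields quasi-modular rather than scalar multiples of \(u_i\).
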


\begin{proof}
By \KSW{} Lemma 4.3, the first weight at which the trivial \(\alg{sl}(2)\)-module appears in \(L(k,\lambda_i)_{[h_{\lambda_i}+n]}\) is \(n=\lambda_i/2\), with multiplicity one, and by \KSW{} Lemma 4.4(1) the corresponding one-dimensional trivial isotypic component is torus primary for \(W_{\lambda_i}\), in the sense of \KSW{} Definition 3.1. 
 Both are statements purely about the single module \(L(k,\lambda_i)\), established by \KSW{} independently of any \(n\)-point structure. 
We use them exactly as stated, at \(i=1\) and at \(i=2\) separately. 
 This identifies \(\wt(u_i)=h_{\lambda_i}+\lambda_i/2\) as the weight of \KSW's{} own torus-primary vector for \(L(k,\lambda_i)\), and shows \(u_1\) is torus primary for \(W_{\lambda_1}\), \(u_2\) for \(W_{\lambda_2}\). 
 By \cref{defnnpoint-primary} 
(which requires only that \emph{each coordinate separately} be torus primary in the sense 
of \KSW{} Definition 3.1) 
  \((u_1,u_2)\) is therefore a torus \(2\)-point primary tuple. 
 No appeal to \cref{lemsamepoint} or \cref{propcoordwise} is needed for this conclusion. 
   We stress that this argument is entirely per-coordinate. 
 It does not invoke the sum rule \eqref{eqsumrule} of \cref{propcoordwise}, which would force only a \emph{sum} of two terms to vanish, not each individually, exactly as \cref{sseccoordwise-scope} explains. 
 The direct, per-coordinate route just given is already everything \KSW{} needs at \(n=1\) and requires no \(n\)-point input. Uniqueness modulo \(K_1\times K_2\) is \KSW{} Theorem 4.2(2), applied to each coordinate separately.

We emphasize what this argument does \emph{not} claim: 
it does not show that a specific composite trace \(\npf^{\vY}(u_1,\zeta_1;u_2,\zeta_2;\tau)\), 
built from a \emph{given} two-point fusion chain \(\vY\), is itself non-zero. 
That is a two-point statement about the trace map of \cref{defnblock}, 
verified directly for the extremal channel in \cref{propextremal-leading} below, 
and is the reason the theorem above speaks of a \emph{candidate} 
tuple pending that verification.
\end{proof}

\subsection{Leading order-of-coincidence behaviour via crossing}
\label{ssecleading}

We mention the leading term of \cref{thmtaylor} explicitly for the extremal chain \(\lambda_1=\lambda_2=k\) of \cref{propfusionchain}, using \cref{thmope-reduction}.

\begin{lem}
\label{lemkk-fusion}
\(N^\nu_{kk}=1\) iff \(\nu=0\); that is, \(L(k,k)\fuse L(k,k) = L(k,0)=V\).
\end{lem}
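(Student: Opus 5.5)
The plan is to read the lemma off directly from the explicit \(\alg{sl}(2)\) fusion rule recalled at the start of \cref{secsl2}. Set \(\lambda=\mu=k\) there. The rule gives \(N^\nu_{kk}=1\) exactly when three conditions hold: \(\abs{k-k}\le\nu\), \(\nu\le\min\set{2k,\,2k-2k}\), and \(2k+\nu\) is even. The first condition says \(\nu\ge0\), and the second says \(\nu\le0\). So \(\nu=0\) is forced, and the parity condition \(2k+0\) even is then automatic. Hence \(N^\nu_{kk}=\delta_{\nu,0}\).

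The next step is the fusion-product statement. Rationality and rigidity (as recalled at the start of \cref{sseccrossing}) give \(L(k,k)\fuse L(k,k)\cong\bigoplus_\nu N^\nu_{kk}L(k,\nu)\). By the first step this is just \(L(k,0)=V\).

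As an independent sanity check, I would also compare with the \(\lambda=k\) computation already carried out in the proof of \cref{propfusionchain}. There the rule with \(\lambda=k\) was shown to force \(\nu=k-\mu\), and setting \(\mu=k\) gives \(\nu=0\). Equivalently, \(L(k,k)\) is the simple current, of order two, whose dual is itself.

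I could also confirm the count with the Verlinde formula, using \(S_{kj}=(-1)^jS_{0j}\) from the proof of \cref{propfusionchain}. This gives \(N^\nu_{kk}=\sum_jS_{kj}^2S_{\nu j}/S_{0j}=\sum_jS_{0j}S_{\nu j}=\delta_{\nu 0}\), by orthogonality of the real symmetric matrix \(S\).

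There is no real obstacle here. The only point needing care is the boundary case \(k=0\), where everything trivially reduces to \(V\fuse V=V\). The argument is valid for all \(k\ge0\), with no parity assumption on \(k\).
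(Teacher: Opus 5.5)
Your proposal is correct and follows the paper's proof: substituting \(\lambda=\mu=k\) into the fusion rule gives \(0\le\nu\le\min\set{2k,0}=0\), which forces \(\nu=0\), and the parity condition then holds automatically. Your extra cross-checks are valid but not needed. The Verlinde computation \(\sum_j S_{0j}S_{\nu j}=\delta_{\nu 0}\) is right, and the ``order two'' simple-current remark only applies for \(k\ge1\).
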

\begin{proof}
The fusion rule gives \(\abs{k-k}=0\le\nu\le\min\set{2k,2k-2k}=0\), forcing \(\nu=0\).
  Parity \(k+k+0\) is even automatically. 
\end{proof}

Thus, unlike a generic pair of insertions, the extremal pair \(u_1,u_2\in L(k,k)_{[h_k+k/2]}\) has an operator product expansion, as \(\zeta_1\to\zeta_2\), mapping entirely inside \(V=L(k,0)\) itself. 
 Consequently the composite intertwining operator of \cref{thmtaylor} is (a multiple of) the module action \(Y_{W_{\mu_0}}\) itself, and the effective iterated-mode vector \(w\) at leading order lies in \(V\). Since \(u_1,u_2\) have conformal weight \(h_k+k/2=\tfrac{3k}{4}\) (using \(h_k=k/4\)) and the identity channel of a two-point function of equal-weight primaries has the universal short-distance form \((\zeta_1-\zeta_2)^{-2\wt(u_1)}\bigl(\text{const}+O(\zeta_1-\zeta_2)\bigr)\), \cref{thmtaylor} gives:

\begin{prop}
\label{propextremal-leading}
For the extremal chain \(\lambda_1=\lambda_2=k\) and \(\mu_0\in\set{0,\dots,k}\), \(\mu_1=k-\mu_0\), with \(u_2\) normalized dual to \(u_1\) as in the proof,
\[
F^{\vY}_{u_1,u_2}(x;\tau) = x^{-3k/2}\bigl(\ch{L(k,\mu_0)}(\tau) + O(x^2)\bigr),\qquad x=\zeta_1-\zeta_2,
\]
where \(\ch{L(k,\mu_0)}(\tau)=\tr_{L(k,\mu_0)}q^{L(0)-c/24}\) is the (\KSW{}-normalized, \cite{KSW}*{eq. (4.13)}) character of the channel module.
  The coefficient is independent of \(\mu_0\), being computed entirely inside \(V\) prior to any insertion into a channel trace.
\end{prop}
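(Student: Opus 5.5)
The plan is to apply \cref{thmope-reduction} at $n=2$ to the pair $(u_1,u_2)$ and then evaluate the resulting one-point function by hand. By \cref{lemkk-fusion} there is only one fusion channel, $\nu=0$, and the exponent in \eqref{eqiterate} is
\[
h_0+m-2\,\wt(u_1)=m-\tfrac{3k}{2},
\]
using $\wt(u_i)=h_k+k/2=3k/4$ from \cref{thmbiprimary}. These exponents are integers when $k$ is even, which \cref{lemparity} guarantees here.

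Step one is to write the OPE of $\iop1[u_1,\zeta_1]\iop2[u_2,\zeta_2]$ as $\zeta_1\to\zeta_2$. The composite intertwiner is a scalar multiple of the module action $Y_{W_{\mu_0}}$, so the level-$m$ term is $x^{m-3k/2}\,Y_{W_{\mu_0}}[v_m,\zeta_2]$ with $v_m\in V_{[m]}$. The leading vector is $v_0\in V_{[0]}=\CC\wun$. The OPE constant is fixed by the invariant pairing: I choose $u_2$ dual to $u_1$, so that $v_0=\wun$. This is a computation inside $V$ via the skew-symmetry and associativity of intertwiners, so the coefficient does not depend on $\mu_0$.

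Step two is to insert $v_0=\wun$ into the trace over $W_{\mu_0}$. Since $Y[\wun,\zeta]=\mathrm{id}$, the leading coefficient is
\[
\tr_{L(k,\mu_0)}q^{L[0]-c/24}.
\]
This is the character in the normalization of \cite{KSW}*{eq. (4.13)}.

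Step three is to show that the $x^{1-3k/2}$ term vanishes, which is what makes the remainder $O(x^2)$. That term involves $v_1\in V_{[1]}=\alg{sl}(2)$, and its trace is the one-point function $\ptf(v_1,\tau)$ of a weight-one vector $a$. By \KSW{} Proposition 2.8 this vanishes, because $\alg{sl}(2)$ acts nontrivially on $V_1$; equivalently, $\tr\, o(a)q^{L(0)}=0$ since $a_0$ acts traceless on each $\alg{sl}(2)$-module. I also have to absorb the square-bracket shift: expanding $\iop{}[u,\zeta]$ in $x(\zeta)-1=2\pi i\zeta+\cdots$ gives only multiplicative corrections to the leading $x^{-3k/2}$, and I will check that these are even in $x$. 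If needed I will reformulate everything directly in the square-bracket OPE, where the coordinate is linear.

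The main obstacle is that second-order claim. The first-order term $v_1$ is handled by \KSW{} Proposition 2.8, but showing that the coordinate-change corrections do not produce an $x^{1}$ term requires using the symmetry $x\mapsto -x$. I would get that symmetry from swapping the two identical insertions, using skew-symmetry $u_1\leftrightarrow u_2$ in the single vacuum channel.
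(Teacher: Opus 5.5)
Your route is the paper's own. You use the single vacuum channel from \cref{lemkk-fusion}, insert the vacuum term into the trace to get the character, and kill the weight-one term because $V_1\cong\alg{sl}(2)$ has no invariants. Your appeal to \KSW{} Proposition 2.8 is equivalent to the paper's commutator argument in \cref{propsubleading-vanish}. The genuine gap is the normalization, which is where the paper's proof spends most of its effort. You write ``I choose $u_2$ dual to $u_1$'' as if $u_2$ were free. It is not: $u_2$ must be a torus-primary vector of \KSW{} Theorem 4.2 for $L(k,k)$ at weight $h_k+k/2$. By \KSW{} Lemma 4.3 the trivial $\alg{sl}(2)$-isotypic component of that weight space is one-dimensional, so $u_2$ is forced (modulo $K_2$) onto the line $\CC u_1$. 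A dual normalization therefore exists if and only if $\langle u_1,u_1\rangle\neq0$. If that line were isotropic, then $v_0=0$, the $x^{-3k/2}$ coefficient would vanish instead of being a character, and the exact pole order used in \cref{thmextremal}(1) would fail. The missing argument, as in the paper, runs as follows.
\begin{itemize}
\item $N^0_{kk}=1$ gives $L(k,k)\cong L(k,k)^\ast$, hence a non-degenerate invariant form.
\item Distinct $L(0)$-eigenspaces are orthogonal.
\item Inside one eigenspace, distinct $\alg{sl}(2)$-isotypic components are orthogonal, because the form is invariant under the zero modes (Schur).
\end{itemize}
So the form is non-degenerate on each isotypic block separately, and on the one-dimensional trivial block this says exactly $\langle u_1,u_1\rangle\neq0$. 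Only then is $u_2:=u_1/\langle u_1,u_1\rangle$ available.

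There are two smaller points. First, evenness of $k$ is not supplied by \cref{lemparity}, since $\lambda_1+\lambda_2=2k$ is always even. It is the hypothesis inherited from \cref{thmbiprimary}, where $f_{-1}^{k/2}\ket{k}$ must make sense.

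Second, your ``main obstacle'' does not arise in the paper. The paper expands directly in the linear coordinate $x=\zeta_1-\zeta_2$ of the square-bracket formalism, so your Step three alone disposes of the $x^{1-3k/2}$ term. If you instead work with $\mathcal{Y}(u,e^{z}-1)e^{z\wt(u)}$, $z=2\pi i\zeta$, the vacuum term carries $(e^{z_1}-e^{z_2})^{-3k/2}e^{3k(z_1+z_2)/4}=(e^{z/2}-e^{-z/2})^{-3k/2}$ with $z=z_1-z_2$. This is already $z^{-3k/2}$ times an even series, so no swap symmetry is needed there.

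Your skew-symmetry idea does work one level down. Since $\ispc{L(k,k)}{L(k,k)}{V}$ is one-dimensional, $\mathcal{Y}(u_1,x)u_1=\epsilon\,e^{xL(-1)}\mathcal{Y}(u_1,-x)u_1$ for some scalar $\epsilon$. Comparing the $x^{-3k/2}$ and $x^{1-3k/2}$ coefficients, using $L(-1)\wun=0$, forces $v_1=0$ outright. But this again works only once $v_0\neq0$ is known, i.e.\ after the non-isotropy argument above.
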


\begin{proof}
\emph{Normalization.} Since an intertwining operator is well defined only up to an overall scalar, so is \(\iop1\) and hence, a priori, the normalization of \(u_2\) relative to \(u_1\) (equivalently, of the invariant form itself, unique up to scalar on the simple module \(L(k,k)\)). 
 The leading coefficient is, in general, \(c_k:=\langle u_1,u_2\rangle\) for whatever non-degenerate invariant pairing \(\langle\,\cdot\,,\cdot\,\rangle\) on \(L(k,k)\) and whatever normalization of \(u_1,u_2\) are in force. In the extremal case \(\lambda_1=\lambda_2=k\) treated here, \(u_1\) and \(u_2\) are torus-primary vectors of \emph{the same module} \(L(k,k)\) at \emph{the same weight} \(h_k+k/2\) (\cref{thmbiprimary}).
  Since \KSW{} Lemma 4.3 identifies the trivial \(\alg{sl}(2)\)-isotypic component at this weight as exactly one-dimensional, \(u_1\) and \(u_2\) automatically span the \emph{same} line, and \(u_2\) is forced 
 (modulo the kernel \(K_2\) of \cref{defnKi}, and up to the residual scalar ambiguity just 
mentioned) 
  to be a scalar multiple of \(u_1\) itself. There is, in particular, no larger affine hyperplane of the ambient weight space to choose \(u_2\) from. 
 The choice is confined, from the beginning, 
 to the one-dimensional torus-primary line shared with \(u_1\).

We now show this common line is not isotropic for the pairing, thus the residual scalar is uniquely fixed by \(\langle u_1,u_2\rangle=1\) rather than assumed non-zero. The invariant bilinear form on \(L(k,k)\) is invariant in particular under the \emph{horizontal} \(\alg{sl}(2)\subset\widehat{\alg{sl}}(2)\) of zero modes. 
 Thus, by the standard orthogonality of an invariant form across non-isomorphic 
isotypic components 
 (a Schur's-lemma argument: the form induces a module map from one isotypic block 
to the dual of another, which must vanish unless the two blocks are isomorphic),
  it vanishes identically between vectors lying in \emph{different} \(\alg{sl}(2)\)-isotypic 
components of any one weight space. \cref{lemkk-fusion}'s \(N^0_{kk}=1\) is exactly the standard criterion (\cite{FreLepMeu}*{\S 5.2}) for \(L(k,k)\) to carry a non-degenerate invariant pairing with itself. 
 I.e., for the self-duality \(L(k,k)\cong L(k,k)^*\). 
 Non-degeneracy of this pairing on the whole (simple) module forces non-degeneracy
 on each weight space, 
   hence, by the orthogonality just noted,
 on each isotypic block of each weight space \emph{separately} 
(a form that is block-diagonal across isotypic types is non-degenerate on the
 whole space if and only if it is non-degenerate on every block). 
Applied to the one-dimensional trivial-isotypic block at weight \(h_k+k/2\), this says precisely that the self-pairing of a spanning vector does not vanish. 
 \emph{We now fix the residual scalar ambiguity} by rescaling \(u_2\) 
 (already forced, as just shown, to be proportional to \(u_1\) modulo \(K_2\), 
rather than arbitrary chosen) 
 so that \(\langle u_1,u_2\rangle=1\).
  Concretely, one may take the representative \(u_2:=u_1/\langle u_1,u_1\rangle\) modulo \(K_2\), any other representative differing from this one only by an element of \(K_2\) and so affecting none of the traces below. With \(u_2\) chosen this way, \(c_k=1\ne0\) is immediate by construction, and \(c_k\) is independent of \(\mu_0\) because the operator product of \(u_1\) with \(u_2\) is computed entirely inside \(V\), before any insertion into a channel trace.

\emph{The formula.} By \cref{lemkk-fusion} the operator product expansion of
 \(\iop1(u_1,\zeta_1-\zeta_2)u_2\) lies entirely in \(V\). 
 By the \(L_{-1}\)-derivative and translation covariance its leading (most singular) term is \((\zeta_1-\zeta_2)^{-2\wt(u_1)}\) times a vector in \(V_0=\CC\wun\) (the lowest weight space of \(V\) itself), whose coefficient is exactly the invariant-form pairing of \(u_1\) against \(u_2\). 
  This is the standard normalization of a two-point conformal block in the identity channel, see, e.g., \cite{FreLepMeu}*{\S 5} or \cite{MasTui-torusnpt}*{\S3}. 
 It does not depend on which channel \(W_{\mu_0}\) the composite trace closes up through, since it is a statement purely about the operator product of \(u_1\) with \(u_2\) inside \(V\), prior to insertion into any trace. Substituting a multiple \(c_k\wun\) of the vacuum for the leading term of \(w\) in \cref{thmtaylor} and using \(o(\wun)=\mathrm{id}\) turns the trace \eqref{eqnpf-def} at that order into \(\tr_{L(k,\mu_0)}q^{L(0)-c/24}=\ch{L(k,\mu_0)}(\tau)\), as claimed. 
 The weight count \(2\wt(u_1)=3k/2\) is immediate from \(\wt(u_1)=h_k+k/2=k/4+k/2=3k/4\), and the \(O(x^2)\) remainder (rather than \(O(x)\)) is \cref{propsubleading-vanish} below.
\end{proof}

\begin{prop}
\label{propsubleading-vanish}
In the setting of \cref{propextremal-leading}, the coefficient of \(x^{-3k/2+1}\) in \(F^{\vY}_{u_1,u_2}(x;\tau)\) vanishes identically, for every \(\mu_0\).
\end{prop}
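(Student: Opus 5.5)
The plan is to read off the coefficient of \(x^{-3k/2+1}\) from the operator product expansion \eqref{eqiterate} in the single channel \(\nu=0\) of \cref{lemkk-fusion}. By \cref{lemkk-fusion} and \cref{propfinite-channels}, the only contributions to \(F^{\vY}_{u_1,u_2}\) near \(x=0\) come from \(W_\nu=V\). The exponents are \(h_0+m-2\wt(u_1)=m-3k/2\) for \(m\ge0\). The subleading coefficient in question is therefore exactly the \(m=1\) term. By \cref{thmtaylor} it equals a one-point function \(\ptf^{Y_{W_{\mu_0}}}(w_1,\tau)\), with \(w_1=\pi_{0,1}(u_1\times u_2)\in V_{[1]}=V_1\cong\alg{sl}(2)\). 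Concretely, \(w_1\) is the coefficient of \(x^{-2\wt(u_1)+1}\) in \(\iop1(u_1,x)u_2\), projected to \(V\).

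The first step is to show that \(w_1\) lies in the non-trivial part of \(V_1\) for the horizontal \(\alg{sl}(2)\). Since \(V_1\) is the adjoint representation, it has no trivial isotypic component at all, so this step is automatic. I will nevertheless record the mechanism, since it also explains why the invariant-form argument of \cref{propextremal-leading} is compatible. The map \(u_1\otimes u_2\mapsto w_1\) is an \(\alg{sl}(2)\)-equivariant map, by the zero-mode commutator formula \eqref{eqjacobi-mode} applied with \(a\in V_1\). Both \(u_1\) and \(u_2\) lie in the trivial isotypic line of \cref{thmbiprimary}, so \(u_1\otimes u_2\) is \(\alg{sl}(2)\)-invariant. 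Its image \(w_1\) is therefore an invariant vector of the adjoint module \(V_1\), and hence \(w_1=0\).

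This already gives the vanishing of the coefficient, before any trace is taken. As a backup argument, I would note that \(\ptf(w,\tau)=0\) for any \(w\in V_1^{\mathrm{non\text{-}triv}}\), by \KSW{} Proposition 2.8 (recalled in \cref{sseccoordwise}). This applies with \(U=V\) and with \(W=L(k,\mu_0)\) for every \(\mu_0\). The backup covers the case where equivariance is established only for the traced quantity.

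The main obstacle is justifying the equivariance of the projection \(\pi_{0,1}\) under the zero modes \(a_0\), \(a\in V_1\). This requires checking that the iterate (OPE) form of the Jacobi identity intertwines \(a_0\) acting on the pair \((u_1,u_2)\) with \(a_0\) acting on the fused vector, on the branch fixed in \eqref{eqlaurent}. I would prove it first as a formal identity: take \(\Res_{z_1}\) of the Jacobi identity for \(Y_V(a,z_1)\) against \(\iop1(u_1,x)u_2\). This gives \(a_0\iop1(u_1,x)u_2=\iop1(a_0u_1,x)u_2+\iop1(u_1,x)a_0u_2\). Comparing coefficients of \(x^{-3k/2+1}\) then yields \(a_0w_1=0\), because \(a_0u_1=a_0u_2=0\) for trivial-isotypic vectors. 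In the case \(k\) odd the chain is empty by \cref{lemparity} (indeed \(\lambda_1=\lambda_2=k\) must then still satisfy the parity condition, and \cref{thmbiprimary} needs \(k\) even), so only even \(k\) need be treated.
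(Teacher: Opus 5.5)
Your argument is correct, but your main route differs from the paper's.

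\textbf{The paper's route.} It never examines the \(V_1\)-component of the operator product itself. It identifies the coefficient of \(x^{-3k/2+1}\) with \(\tr_{L(k,\mu_0)}o(v)q^{L(0)-c/24}\) for whatever \(v\in V_1\) arises. It then kills this trace using \(\alg{sl}(2)=[\alg{sl}(2),\alg{sl}(2)]\): \(o(v)=v_0\) is a sum of commutators, so it has zero trace on every finite-dimensional homogeneous space of \(L(k,\mu_0)\). This is essentially your ``backup'' argument, namely \KSW{} Proposition 2.8 with \(V_1^{\mathrm{non\text{-}triv}}=V_1\).

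\textbf{Your route.} You apply the commutator formula \([a_0,\mathcal Y(u,x)]=\mathcal Y(a_0u,x)\) to the fusion intertwiner \(L(k,k)\otimes L(k,k)\to V\). Together with \(a_0u_1=a_0u_2=0\), this shows every coefficient of the operator product is \(\alg{sl}(2)\)-invariant. The adjoint module has no invariants, so the \(V_{[1]}=V_1\) component is already zero before any trace is taken.

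\textbf{What each buys.} The paper's argument uses nothing about \(u_1,u_2\). It therefore does not care which representative of \(u_2\) modulo \(K_2\) is chosen, or whether the insertions lie in the trivial line at all. Yours needs the representative of \(u_2\) to lie exactly in the trivial isotypic line. That is harmless here, since traces do not see \(K_2\), and the paper's representative \(u_2=u_1/\langle u_1,u_1\rangle\) qualifies. In return you get a vector-level statement that the paper's method cannot give. For instance, the weight-\(2\) component of the operator product is itself \(\alg{sl}(2)\)-invariant. Hence the non-trivial piece \(v\in V(4)\oplus V(2)\) in the Remark following the proposition is actually zero, not merely invisible in the trace.

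\textbf{A small correction to your last sentence.} \cref{lemparity} gives no obstruction for \(\lambda_1=\lambda_2=k\), because \(2k\) is always even. The chains \((\mu_0,k-\mu_0)\) therefore exist for odd \(k\) as well; the paper's own parenthetical in \cref{propfusionchain} makes the same slip. The correct reason to restrict to even \(k\) is the one you also give. The invariant vectors \(u_i\) of \cref{thmbiprimary} exist only for even \(k\): for odd \(k\) every \(h_0\)-weight of \(L(k,k)\) is odd, so there is no trivial \(\alg{sl}(2)\)-component to take \(u_1,u_2\) from.
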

\begin{proof}
By \cref{propextremal-leading}'s proof, this coefficient is (a multiple of) \(\tr_{L(k,\mu_0)}o(v)q^{L(0)-c/24}\) for \(v\in V_1\) the mode of \(\iop1(u_1,x)u_2\) at \(s=2\wt(u_1)-2\), 
 i.e., the component of the operator product mapping in the weight-\(1\) space \(V_1\).
  Since \(V\) is generated in weight \(1\) as an affine \voa, \(V_1\cong\alg{sl}(2)\) as a Lie algebra under \([x,y]=x_0y\) (\KSW{} \S2.2, preceding their Proposition 2.8), and \(o(v)=v_0\) acts on each finite-dimensional \(L[0]\)-homogeneous subspace of \(L(k,\mu_0)\) as the representation of \(v\) under this Lie algebra action. 
 The homogeneous subspaces of the integrable affine module \(L(k,\mu_0)\) are finite-dimensional (a standard fact about integrable highest-weight modules of an affine Kac-Moody algebra; see, e.g., Kac \cite{Kac-InfDim}, Chapter 12).
  Thus the ordinary finite-dimensional trace identity used below applies on each one before summing over weights to form the \(q\)-series. 
 Since \(\alg{sl}(2)=[\alg{sl}(2),\alg{sl}(2)]\) is semisimple, every \(v\in V_1\) is a 
finite sum of commutators \(v=\sum_r[x_r,y_r]\) in \(\alg{sl}(2)\), 
 thus on any finite-dimensional representation \(\rho\) (in particular on every weight space \(L(k,\mu_0)_{[h+m]}\), \(m\ge0\)) \(\tr(\rho(v))=\sum_r\tr([\rho(x_r),\rho(y_r)])=0\) identically. We mention explicitly, coefficientwise, how this vanishing on each weight space propagates to the vanishing of the full \(q\)-series, to avoid any question about termwise convergence.
  Writing \(h=h_{\mu_0}\) for the lowest weight of \(L(k,\mu_0)\), gives 
\[
\tr_{L(k,\mu_0)}o(v)q^{L(0)-c/24} = \sum_{m\ge0} q^{h+m-c/24}\,\tr_{L(k,\mu_0)_{[h+m]}}o(v) = \sum_{m\ge0}q^{h+m-c/24}\cdot0 = 0,
\]
each summand vanishing individually by the finite-dimensional argument just given,
  with no rearrangement of a limit required.
\end{proof}

\begin{rmk}
\Cref{propextremal-leading} exhibits, for each of the \(k+1\) extremal two-point channels, 
a leading singular term proportional to an \emph{ordinary character} rather than to 
a new vector-valued modular form. 
  \cref{propsubleading-vanish} shows the very next term vanishes. 
 The new two-point data is contained no earlier than the coefficient of \(x^{-3k/2+2}\), 
which by \cref{thmtaylor} is a weight-\(2\) \vvmf{} lying in \(V(\rho_{\mu_0})_\bullet\). 
 Since \(L(k,k)\fuse L(k,k)\) has no channel besides \(L(k,0)\) (\cref{lemkk-fusion}), this coefficient is controlled entirely by the \(V_1\)-trivial part of \(V_2\).

 For \(k\ge2\) (the range of interest, \(k=0\) giving the trivial \voa), the weight-\(2\) space \(V_2\) of \(V=L(k,0)\) coincides with that of the free (Verma) module generated by the vacuum, since the null vector generating the defining relations of the simple quotient \(L(k,0)\) first occurs at conformal weight \(k+1\ge3\) (the standard fact that the level-\(k\) vacuum module of \(\widehat{\alg{sl}}(2)\) is cut out, beyond the Verma relations, by the single singular vector at weight \(k+1\)). Writing \(e,h,f\) for the Chevalley basis and \(x_{-n}\) for the mode of \(x\in\{e,h,f\}\) at grade \(n\), a basis of \(V_2\) is \(\{e_{-1}^2,e_{-1}h_{-1},e_{-1}f_{-1},h_{-1}^2,h_{-1}f_{-1},f_{-1}^2\}\wun\cup\{e_{-2},h_{-2},f_{-2}\}\wun\) (\(9\) vectors), with \(h_0\)-eigenvalues \(4,2,0,0,-2,-4\) and \(2,0,-2\) respectively. 
 As an \(\alg{sl}(2)\)-module (via \([x,y]=x_0y\) on \(V_1\), extended to \(V_2\)) this weight multiplicity list \((1,2,3,2,1)\) at \(h_0=(4,2,0,-2,-4)\) decomposes uniquely as \(V(4)\oplus V(2)\oplus V(0)\) (writing \(V(2j)\) for the \((2j+1)\)-dimensional spin-\(j\) irreducible: one copy of spin \(2\) accounts for one state at each of \(h_0=4,2,0,-2,-4\), leaving one further state at \(h_0=2,0,-2\) each, i.e., one copy of spin \(1\), leaving exactly one further state at \(h_0=0\)). 
In particular, the space of \(V_1\)-invariants in \(V_2\) is exactly \emph{one}-dimensional, confirming directly that it is spanned by the conformal vector \(\omega\) (manifestly \(V_1\)-invariant and non-zero). Moreover \(o(\omega)=L(0)\) exactly, being the coefficient of \(z^{-2}\) in \(Y(\omega,z)=\sum_nL(n)z^{-n-2}\), thus 
\begin{align*}
\tr_{L(k,\mu_0)}o(\omega)q^{L(0)-c/24} &\;=\; \tr_{L(k,\mu_0)}\bigl(L(0)q^{L(0)-c/24}\bigr) \\
&\;=\; q\frac{d}{dq}\ch{L(k,\mu_0)}(\tau) + \frac{c}{24}\ch{L(k,\mu_0)}(\tau),
\end{align*}
an exact identity (the weight-\(0\) case of \KSW's{} modular derivative \(\partial\) of equation (2.15), which reduces to \(q\,d/dq\) with no Serre correction at weight \(0\)), consistent with \cref{thmtaylor}'s prediction that this coefficient lies in \(R\cdot\ch{L(k,\mu_0)}\). What remains open is only a single scalar \(\alpha_k\).
  Write the leading weight-\(2\) component of the \(u_1,u_2\) operator product as \(\alpha_k\,\omega+v\) with \(v\in V_2\) lying in a non-trivial \(V_1\)-submodule of \(V_2\) (the \(V(4)\oplus V(2)\) part of the decomposition \(V_2\cong V(4)\oplus V(2)\oplus V(0)\) of \cref{propsubleading-vanish}'s Remark). By exactly the argument of \cref{propsubleading-vanish} itself (\(v\) a finite sum of \(\alg{sl}(2)\)-commutators, so \(\tr_{L(k,\mu_0)_{[h+m]}}o(v)=0\) on every finite-dimensional weight space, hence \(\tr_{L(k,\mu_0)}o(v)q^{L(0)-c/24}=0\) coefficientwise), \(v\) drops out of the trace entirely, and the coefficient of \(x^{-3k/2+2}\) in \cref{propextremal-leading} is \(\alpha_k\) times the displayed expression above. We can now say precisely what computing \(\alpha_k\) entails, by direct analogy with \KSW{} Lemma 4.4's own proof of the leading-order (weight-\(0\)) case.  
 Taking \(u_1=f_{[-1]}^{k/2}\ket{k}\) as in \KSW{} equation (4.6) and \(u_2\) dual to \(u_1\) as in \cref{propextremal-leading}, one would apply the Jacobi identity \eqref{eqjacobi-mode}-type residue manipulation with \(v=f_{-1}\wun\) exactly as in the derivation of \KSW{} equation (4.26). 
 Now tracking the weight-\(2\) rather than weight-\(0\) piece of the resulting expansion, and project the result onto \(\CC\omega\subset V_2\) using the explicit basis \(\{f_0^i\ket{k}\}\) of the weight-\(0\) piece of \(L(k,k)\) and its dual \(\{\phi_i\}\) of \KSW{} equation (4.24) one further level down. 
  This is a finite but lengthy computation of exactly the same character as \KSW{} Lemma 4.4's, one Virasoro level deeper, and we leave it, together with the fully explicit descendant by descendant 
tower beyond it, for future work (see \cref{rmkgeneraln-outlook}).
   We do not attempt it here
  (explicit mode manipulations against a chosen dual basis) 
and we would rather leave \(\alpha_k\) open.  
At the smallest admissible level, \(k=2\), this recipe specializes concretely enough 
to be worth mentioning explicitly: 
\(u_1=f_{-1}\ket2\) (the case \(k/2=1\) of \KSW{} equation (4.6)). 
  The relevant mode is \((u_1)_su_2\) at \(s=3k/2-3=0\) exactly, and the affine 
integrability relation \(e_{-1}^{\,k-\lambda+1}\ket\lambda=0\) at the extremal weight 
\(\lambda=k=2\) reads simply \(e_{-1}\ket2=0\), cutting the \(9\)-dimensional space of 
naive grade-\(1\) descendants \(x_{-1}v\), \(x\in\{e,h,f\}\), \(v\) in the 
\(3\)-dimensional top space of \(L(2,2)\), down to a \(6\)-dimensional one spanned by 
\(h_{-1}\)- and \(f_{-1}\)-descendants alone. Small as this is, correctly inverting the 
resulting Shapovalov form to build the dual basis one level deeper than \KSW{} equation 
(4.24). 
 We only mention the recipe leaving the 
completion of this specific computation for future work along with the general 
\(k\) case.
\end{rmk}

\subsection{A family of extremal two-point channels}
\label{ssecworked}

Combining \KSW{} Theorem 5.3 (which identifies the level-\(k\) \(1\)-point vector-valued 
form as \(\Psi_k(u,\tau)=\eta^{3k/2}\), \(u\in L(k,k)_{[h_k+k/2]}\) torus primary) with 
\cref{propextremal-leading}, we obtain the following two-point analogue. 
We continue to restrict to even \(k\). 
 This is not a simplifying convenience but is forced by the construction itself,   
since the torus primary vector \(u_1\in f_{-1}^{\lambda_1/2}\ket{\lambda_1}+\cdots\) 
of \cref{thmbiprimary} is built using the mode \(f_{-1}^{\lambda_1/2}\), 
which requires \(\lambda_1/2\) 
 (here \(k/2\)) 
 to be a non-negative integer. 
 For odd \(k\) no such extremal torus bi-primary vector of this shape exists 
and the construction below does not apply. The family has \(k+1\) components, 
one for each admissible extremal fusion channel \(\mu_0=0,\dots,k\) of \cref{propfusionchain}.
 "Rank-one" refers to the fusion rule \(N^{\mu_1}_{k\mu_0}\le1\)
 making each such channel one-dimensional (\cref{propfusionchain}), 
not to any property of \(\mathbf F\) itself, which is \((k+1)\)-dimensional as a 
vector of functions. We also fix terminology for the theorem below. 
 We call \(\mathbf F\) a vector-valued \emph{Jacobi-type} form only in the loose sense that it is built from elliptic functions of a position variable with vector-valued-modular-form coefficients. 
 It is not in the standard sense of a fixed non-zero index (a quasi-periodicity in \(x\) tied to \(\tau\)), 
i.e., part (1) of the theorem shows \(\mathbf F\) is, on the contrary, \emph{elliptic} (index zero) in \(x\), the whole point of \cref{propelliptic}, with each individual Laurent coefficient in \(x\)
 separately an ordinary (scalar index zero) vector-valued modular form in
 \(\tau\) by \cref{thmtaylor}.

\begin{thm}
\label{thmextremal}
Let \(k\ge2\) be even and let \(u_1,u_2\in L(k,k)_{[h_k+k/2]}\) be the torus bi-primary tuple of \cref{thmbiprimary} for \(\lambda_1=\lambda_2=k\). The \((k+1)\)-dimensional vector
\[
\mathbf{F}(x;\tau) := \Bigl(F^{\vY_{\mu_0}}_{u_1,u_2}(x;\tau)\Bigr)_{\mu_0=0}^{k}, 
\]
where \(\vY_{\mu_0}=(\iop1,\iop2)\) the chain through \((\mu_0,k-\mu_0)\), 
satisfies:
\begin{enumerate}
\item for fixed \(\tau\in\HH\), \(\mathbf F(\,\cdot\,;\tau)\) is an elliptic function of \(x\) in a punctured neighbourhood of \(0\) in \(\CC/\lat\), with a pole of exact order \(3k/2\) at \(x=0\) -- a positive integer for every even \(k\ge2\)
 (no half-integer multiplier arises).  
 For fixed generic \(x\) 
(i.e., away from the collision divisor), \(\mathbf F(x;\,\cdot\,)\) is holomorphic in 
\(\tau\in\HH\). Moreover the coefficient of \(x^{-3k/2+1}\) vanishes identically
 (\cref{propsubleading-vanish});

\item the leading Laurent coefficient of \(\ \mathbf F(x;\tau)\) at \(x=0\) is exactly 
 the vector of characters \(\bigl(\ch{L(k,\mu_0)}(\tau)\bigr)_{\mu_0=0}^k\), 
which spans the same \((k+1)\)-dimensional representation of \(\SLTZ\) as the vacuum torus \(0\)-point functions of \(L(k,0)\) (i.e., the standard \(\widehat{\alg{sl}}(2)_k\) affine character representation, with \(\modS,\modT\)-matrices as in \KSW{} equation (6.16)-(6.17) at \(p=0\));

\item under the diagonal collision \(\mu_0=k/2\) 
(which by \cref{propfusionchain} is fixed by \(\mu_0\mapsto k-\mu_0\)), 
the single component \(F^{\vY_{k/2}}_{u_1,u_2}(x;\tau)\) is, as \(x\to0\), 
asymptotic to \(c_k\,x^{-3k/2}\,\ch{L(k,k/2)}(\tau)\), 
and its own leading \(q\)-power as \(\tau\to i\infty\) recovers 
 (after stripping the character's own leading \(q^{h_{k/2}-c/24}\) behaviour)   
the same exponent \(h_{k/2}-c/24=\tfrac{k}{16}\). 
\end{enumerate}
\end{thm}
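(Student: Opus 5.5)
The proof assembles results already established; no new analytic input is needed beyond checking that the hypotheses line up. I would treat the three parts in the order (1), (2), (3), using \cref{propextremal-leading} and \cref{propsubleading-vanish} as the computational core.

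For part (1), ellipticity of each component $F^{\vY_{\mu_0}}_{u_1,u_2}$ in $x=\zeta_1-\zeta_2$ follows from \cref{propelliptic} combined with \cref{proptranslation}. Holomorphy in $\tau$ at fixed generic $x$ is \cref{thmhuang}(ii). I would make one extra remark to justify speaking of a genuine elliptic function near $x=0$, rather than a function on a local branch.
\begin{itemize}
\item By \cref{lemkk-fusion} there is a single fusion channel, the vacuum, so the braiding acts on a one-dimensional space.
\item The only OPE exponents are $h_0+m-2h_k=m-k/2$, and the prefactor $x^{-3k/2}$ has integer exponent for even $k$.
\end{itemize}
Hence the local monodromy of \cref{rmkmonodromy} is trivial, and the expansion \eqref{eqlaurent} is an honest Laurent series with $A\subset\ZZ$. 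The exact pole order $3k/2$ then comes from the leading term $x^{-3k/2}\ch{L(k,\mu_0)}(\tau)$ of \cref{propextremal-leading}. This coefficient is nonzero because a character has leading term $q^{h_{\mu_0}-c/24}$ with coefficient $\dim L(k,\mu_0)_{h_{\mu_0}}=\mu_0+1>0$. The vanishing of the $x^{-3k/2+1}$ coefficient is \cref{propsubleading-vanish}.

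For part (2), the leading Laurent coefficient is read off from \cref{propextremal-leading} component by component. The point to check is that the normalization $c_k=\langle u_1,u_2\rangle=1$ is the same for all $\mu_0$. This holds because the OPE is computed inside $V$ before insertion into any channel, as stressed in that proof. The result is exactly the vector $(\ch{L(k,\mu_0)})_{\mu_0}$.

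As a consistency check with \cref{thmtaylor}, the weight there is $\sum_i\wt[u_i]+\alpha=3k/2-3k/2=0$, which is the weight of characters. That the characters span the standard $(k+1)$-dimensional affine representation, with $\modS,\modT$ as in \KSW{} (6.16)--(6.17) at $p=0$, is Zhu's theorem (\KSW{} Theorem 2.9 with $u=\wun$). Their linear independence, from the distinct leading exponents, shows the span really is $(k+1)$-dimensional.

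I expect the main obstacle to lie in part (2). One must argue that the representation $\rho_{W_0}$ produced on chains by \cref{thmtaylor}, after descent to $C_2(k,k)$, agrees on this leading coefficient with the character representation rather than merely being abstractly isomorphic to it. I would attempt this by applying the coefficient-matching argument in the proof of \cref{thmtaylor} at $\alpha=-3k/2$. That argument shows $\gamma$ acts on the leading coefficients through the same matrix that relates $\vY$ to $\vY'$. Since these coefficients are precisely the characters, that matrix must be Zhu's character transformation matrix on their (independent) span.

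Part (3) is then a specialization and a computation. Take $\mu_0=k/2$, which is fixed by $\mu_0\mapsto k-\mu_0$. By part (2) this component is asymptotic to $c_kx^{-3k/2}\ch{L(k,k/2)}$. The leading $q$-exponent of the character is $h_{k/2}-c/24$, and
\[
h_{k/2}-\frac{c}{24}=\frac{k(k+4)}{16(k+2)}-\frac{2k}{16(k+2)}=\frac{k}{16},
\]
which gives the stated exponent.
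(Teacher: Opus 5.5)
Your proposal is correct and follows the same route as the paper. Part (1) comes from \cref{propelliptic} and \cref{thmhuang}, with the exact pole order read off from the nonvanishing leading term of \cref{propextremal-leading} and the subleading vanishing from \cref{propsubleading-vanish}; part (2) comes from \cref{propextremal-leading} together with Zhu's theorem on characters; part (3) is the \(\mu_0=k/2\) specialization with \(h_{k/2}-c/24=k/16\).

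Your extra remarks refine steps the paper leaves implicit rather than giving a different argument. These are: trivial local monodromy from the single vacuum channel with integral exponents, positivity of the character's leading coefficient, and linear independence of the characters to match the chain representation with Zhu's on the leading coefficient.
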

In (1) it is understood, as throughout, at the chosen collision point on the local branch of 
\cref{thmhuang} - since \(\mathbf F\) is elliptic (\cref{propelliptic}), its other poles in 
\(\CC/\lat\) are simply the \(\lat\)-translates of this same collision divisor, 
not independent singularities. 
In (2),  \(c_k=1\) with the normalization of \cref{propextremal-leading}. 
In (3), the exponent is identified in \KSW{} Theorem 5.3(4) 
for the \(\eta^{3k/2}\) form (there arising as the \emph{full} \(1\)-point answer, 
here as the leading factor multiplying an elliptic function of the relative position).

\begin{proof}
Part (1): by \cref{thmhuang} and \cref{propelliptic} specialized to \(n=2\), 
\(F^{\vY_{\mu_0}}_{u_1,u_2}\) is meromorphic and doubly periodic in \(x\) for fixed \(\tau\).
 The pole order is pinned down exactly, not bounded, by the mode-counting argument used 
in the proof of \cref{propextremal-leading}. 
 The vacuum term of \(\iop1(u_1,x)u_2\) occurs at the mode \((u_1)_s\) with \(\wt(u_2)+\wt(u_1)-s-1=0\), i.e., \(s=2\wt(u_1)-1\), giving a pole of order \emph{at most} \(s+1=2\wt(u_1)=3k/2\) in \(x\) 
(using \(\wt(u_1)=h_k+k/2=3k/4\)). 
  I.e., the pole order is \emph{exactly} \(3k/2\) rather than smaller, is not automatic from mode-counting alone and uses \(c_k=1\ne0\) established in \cref{propextremal-leading} (with the normalization fixed there), since a vanishing leading coefficient would silently lower the apparent pole order. 
   Since \(k\) is even, \(3k/2\in\ZZ_{\ge0}\) for \emph{every} admissible \(k\). 
 There is no half-integer power to track and no case split on \(k\bmod4\) is needed. As a consistency check, \cref{thmtaylor} predicts weight \(2\wt(u_1)+\nu = 3k/2+(-3k/2)=0\) for the coefficient of \(x^{-3k/2}\), matching the weight-\(0\) multiplier system of an ordinary character exactly, with nothing left over to assign to a fractional \(\eta\)-power. 
   This is what makes the identification with \(\ch{L(k,\mu_0)}\) in Part (2) dimensionally consistent.
 Part (2) is \cref{propextremal-leading} together with the classical fact 
(Zhu \cite{Zhu}) that the vector of all simple-module characters of a rational 
\ctwo \voa{} spans a \vvmf{} for \(\SLTZ\). 
  This is  the \(p=0\) case of \KSW's{} own categorical \(\modT^{(p)}\)-matrix (equation (6.9), with \(p=0\) forcing \(\iota=j\) and \(T^{(0)}_{i,i}=\theta_i/\zeta=e(h_i-c/24)\)) identifies the representation explicitly. Part (3) is the \(\mu_0=k/2\) specialization of \cref{propextremal-leading}, using \(h_{k/2}-c/24=k/16\) exactly as computed in the proof of \KSW{} Theorem 5.3.
\end{proof}

\begin{rmk}[The excluded case \(k=0\)]
At \(k=0\) the hypothesis \(\lambda_1=\lambda_2=k=0\) forces \(u_1,u_2\in L(0,0)_{[0]}=\CC\wun\) to be multiples of the vacuum vector itself, \(V=L(0,0)=\CC\wun\) is the trivial one-dimensional \voa{} (the degenerate case already set aside in \cref{ssecworked} above), and \(F^{\vY_{0}}_{u_1,u_2}(x;\tau)\) degenerates to the constant function \(1\) (\(\mu_0=0\) the only value). It is regular, not singular, at \(x=0\): a "pole of order \(0\)" is a pole in name only, and we accordingly state \cref{thmextremal} for \(k\ge2\), 
 mentioning the trivial \(k=0\) instance here separately rather than as an awkward boundary case of part (1).
\end{rmk}

\begin{rmk}
\label{rmkgeneraln-outlook}
\Cref{thmextremal} is the two-point, extremal-channel analogue of the one-dimensional 
family \(\Psi_k(u,\tau)=\eta^{3k/2}\) of \KSW{} Theorem 5.3. 
 It is the natural first case to work out completely because \(L(k,k)\) 
is a simple current (\cref{lemkk-fusion}), thus no new fusion channel appears at 
any order of the Laurent expansion in \(x\). 
 The next cases in increasing complexity are (a) the two-point function for a general even \(\lambda_1=\lambda_2=\lambda<k\). 
 Here \(L(k,\lambda)\fuse L(k,\lambda)\) contains several channels 
\(\nu=0,2,\dots,\min\set{2\lambda,2k-2\lambda}\). 
 The analogue of \cref{propextremal-leading} requires the sub-leading Clebsch-Gordan-type coefficients controlling how \(u_1,u_2\) project onto each channel (the two-point analogue of \KSW{} Lemma 4.4's dual-basis computation).  
  The case (b), the fully general \(n\)-point extremal chain \(\lambda_1=\cdots=\lambda_n=k\), for which \cref{lemkk-fusion} forces the alternating pattern \(\mu_i=k-\mu_{i-1}\), so that \(\mu_i=\mu_0\) for \(i\) even and \(\mu_i=k-\mu_0\) for \(i\) odd.  
  Consistency around the cycle (\(\mu_n=\mu_0\)) is automatic for \(n\) even - so that \(\mu_0\) ranges over all \(k+1\) values \(0,\dots,k\) exactly as at \(n=2\), giving \(\dim\Bl(k,\dots,k)=k+1\) for every even \(n\ge2\) by \cref{propfusionchain} - and forces \(\mu_0=k/2\) uniquely for \(n\) odd, in which case \(\dim\Bl(k,\dots,k)=1\) by \cref{propfusionchain}. We leave both directions, and the corresponding 
 computations, for future work.  \cref{secreps} discusses what can be said about the resulting \(\SLTZ\)-representations without carrying them out.
\end{rmk}

%%%%%%%%%%%%%%%%%%%%%%%%%%%%%%%%%%%%%%%%%%%%%%%%%%%%%%%%%%%%%%%%%%%%%%%%%
\section{Representations arising from the extremal channel}
\label{secreps}

 In this Section, we mention what the irreducibility criterion of \KSW{} \S5 gives for the representation identified in \cref{thmextremal}(2), without attempting a classification for general \(k\) (just as \KSW{} Theorem 5.6 and Proposition 5.7 do not attempt one beyond special level families).

Recall \KSW{} Lemma 5.2: if \(\rho\colon\SLTZ\to\GL{d}\) has diagonalizable \(\rho(T)\) with eigenvalues \(\lambda_1,\dots,\lambda_d\), and no non-empty \emph{proper} subproduct \(\prod_{i\in S}\lambda_i\) (\(\emptyset\ne S\subsetneq\set{1,\dots,d}\)) is a \(12\)th root of unity, then \(\rho\) is irreducible.

\begin{prop}
\label{propk2-irred}
For \(k=2\), the representation of \cref{thmextremal}(2) on the leading (character) coefficients of the extremal two-point family, of dimension \(3\), is irreducible.
\end{prop}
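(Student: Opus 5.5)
The plan is to apply \KSW{} Lemma 5.2 directly to the representation identified in \cref{thmextremal}(2). That representation is the affine character representation of \(\widehat{\alg{sl}}(2)_2\), with \(\modT\)-matrix \(T^{(0)}_{i,i}=e(h_i-c/24)\) (\KSW{} equation (6.9) at \(p=0\)). The first step is to confirm that the representation really is \(3\)-dimensional and that \(\rho(T)\) is diagonalizable. The three characters \(\ch{L(2,\mu_0)}\), \(\mu_0=0,1,2\), have pairwise distinct leading exponents \(h_{\mu_0}-c/24\) as \(\tau\to i\infty\). They are therefore linearly independent, and in this basis \(\rho(T)\) is diagonal.

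The second step is to compute the eigenvalues at \(k=2\). Here \(c=3k/(k+2)=3/2\), so \(c/24=1/16\). The conformal weights \(h_\mu=\mu(\mu+2)/16\) are \(h_0=0\), \(h_1=3/16\), \(h_2=1/2\). The \(T\)-eigenvalues are thus
\[
e(-\tfrac{1}{16}),\qquad e(\tfrac{2}{16}),\qquad e(\tfrac{7}{16}).
\]
A product \(e(r)\) is a \(12\)th root of unity if and only if \(12r\in\ZZ\).

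The third step checks every nonempty proper subproduct.
\begin{itemize}
\item For the singletons, \(12r\) equals \(-3/4\), \(3/2\) and \(21/4\).
\item For the pairs, \(r\) equals \(1/16\), \(3/8\) and \(9/16\), giving \(12r\) equal to \(3/4\), \(9/2\) and \(27/4\).
\end{itemize}
None of these is an integer. The full product has \(r=1/2\), but it is excluded by the lemma (\(S\) must be proper), so it needs no check. \KSW{} Lemma 5.2 then gives irreducibility.

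The main point of care is not the arithmetic but the normalization of \(\rho(T)\). The lemma must be applied to the eigenvalues \(e(h_i-c/24)\), which include the \(c/24\) shift carried by the characters themselves, as fixed in \cref{thmextremal}(2). If the multiplier \(\nu_{h}\) were stripped and the bare values \(e(h_i)\) were used, the vacuum eigenvalue would be \(1\) and the criterion would fail trivially. I would therefore state explicitly, citing \KSW{} (6.9) and (6.17), that the relevant \(\rho(T)\) is the one acting on the vector of characters \(\bigl(\ch{L(2,\mu_0)}\bigr)_{\mu_0}\), and only then run the subproduct check.
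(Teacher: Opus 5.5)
Your proposal is correct and follows the paper's own argument. Both apply \KSW{} Lemma 5.2 to \(\rho(T)=\diag\bigl(e(h_\mu-c/24)\bigr)_{\mu=0}^{2}\), with exponents \(-1/16,\,1/8,\,7/16\), and check that none of the six nonempty proper subset sums lies in \(\tfrac1{12}\ZZ\). Your extra remarks are sound and only make explicit what the paper leaves implicit: the three characters are linearly independent because their leading exponents are distinct, and the \(c/24\) shift must be kept in the \(T\)-eigenvalues.
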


\begin{proof}
By \KSW{} equation (6.9) at \(p=0\), the eigenvalues of \(T\) are \(e(h_\mu-c/24)\), \(\mu=0,1,2\), with \(c=3\cdot2/4=3/2\), \(h_0=0,h_1=3/16,h_2=1/2\), giving exponents \(-1/16,\,1/8,\,7/16\) modulo \(1\). A direct check of all \(2^3-2=6\) non-empty proper subsets shows that no subset sum lies in \(\tfrac1{12}\ZZ\): the subset sums are \(-1/16,\,1/8,\,7/16\) (singletons) and \(1/16,\,3/8,\,9/16\) (pairs), and none has denominator (in lowest terms) dividing \(12\) - each has denominator \(16\) or \(8\). By \KSW{} Lemma 5.2 the representation is irreducible.
\end{proof}

For general even \(k\), the analogous check involves \(2^{k+1}-2\) subset sums of the exponents \(\bigl(2\mu^2+4\mu-k\bigr)/\bigl(8(k+2)\bigr)\), \(\mu=0,\dots,k\), and we do not know a uniform argument (of the type used in \KSW{} Theorem 5.6 for the non-congruence question) settling irreducibility for all \(k\). 
  Indeed, since the number of subsets grows exponentially in \(k\) while the possible orders of the eigenvalues are bounded by \(24(k+2)\), it would not be surprising if some \(k\) admit a coincidental proper subproduct mapping in \(\tfrac1{12}\ZZ\), in which case \KSW{} Lemma 5.2 would simply be inconclusive (its hypothesis failing does not imply reducibility). We leave a systematic treatment, and the corresponding question for the full (not just leading-order) two-point representation, for future work.

Concerning congruence: the leading-order representation of \cref{thmextremal}(2) is, by construction, the ordinary representation of \(\SLTZ\) on the vector of characters of the modular tensor category \(\rep{L(k,0)}\).
  This representation always has finite image contained in a congruence quotient \(\SLTZp{N}\) with \(N\mid24(k+2)\), \(N\) equal to the order of the matrix \(T=\diag(e(h_\mu-c/24))_{\mu=0}^k\) (i.e., the least common multiple of the denominators, in lowest terms, of the \(k+1\) exponents \(h_\mu-c/24\) computed in \cref{propk2-irred}'s proof). 
 It is by the classical fact, due to Bantay~\cite{Bantay-congr} building on the congruence property of Dong--Lin--Ng~\cite{DonLinNg15}, that vacuum character representations of rational \ctwo \voa{}s are congruence.  
 This is consistent with, but does not follow from, \KSW{} Theorem 5.3(1) (which treats the case of a \emph{single} insertion of the extremal label, dimension one, always congruence for trivial reasons). Whether the new sub-leading data of 
\cref{secsl2} produces non-congruence representations, as it does already at \(n=1\) for dimension three and above (\KSW{} Theorem 5.5(3), Theorem 5.6), is exactly the kind of question for which the general families of \KSW{} Theorem 5.6 (levels \(k=p^t-2\)) would be the natural place to look once the sub-leading two-point coefficients of \cref{rmkgeneraln-outlook} are computed.

%%%%%%%%%%%%%%%%%%%%%%%%%%%%%%%%%%%%%%%%%%%%%%%%%%%%%%%%%%%%%%%%%%%%%%%%%%%%%%%%%
\section{Categorical formulation: the \(n\)-point modular functor}
\label{seccategorical}

We now extend \KSW{} \S6, the reformulation of the \(1\)-point theory via the modular 
tensor category \(\catC=\rep{L(k,0)}\), following Bakalov-Kirillov \cite{BakKir}, 
  to \(n\) points. We use the conventions of \KSW{} \S6 throughout: \(I\) a complete set of representatives of simple objects of a modular tensor category \(\catC\), \(0\in I\) the unit, \(i^\ast\) the rigid dual, \(d_i\) quantum dimensions, \(D=\sum_id_i^2\), \(\theta_i\) ribbon twists, \(\zeta=\bigl(\sum_i\theta_id_i/\sum_i\theta_i^{-1}d_i\bigr)^{1/6}\), and a fixed choice of bases \(\lambda^\alpha_{(i,j)k}\) of the \(3\)-point coupling spaces \(\Hom_\catC(i\otimes j,k)\) with dual bases \(\Upsilon^\alpha_{(i,j)k}\), giving associators \(\Fmat,\Gmat\) and braiding matrices \(\Rmat\) as in \KSW{} equations (6.5)-(6.6). To be explicit about attribution: the modular functor formalism itself (the assignment of a mapping-class-group representation to any marked surface from a modular tensor category) is Bakalov--Kirillov's~\cite{BakKir}, and \KSW{}~\S6 is its \(g=1,n=1\) instance.
  What is new below is the \(n\)-point conformal block space \(\Bl(p_1,\dots,p_n)\) itself (\cref{sseccat-blocks}), the Verlinde-type dimension formula for it (\cref{propverlinde}, proved here rather than quoted), the observation that the \(T\)-, \(S\)-, and braiding-operator constructions of \KSW{}~\S6 do not require their labelled object to be simple and 
 thus extend to \(p_1\otimes\cdots\otimes p_n\) (\cref{propcatT,propcatS,propmcg}), and the identification of \(\Gamma_n\), the resulting subgroup of the mapping class group of the \(n\)-punctured torus, together with its relation to the pure mapping class group via the Birman exact sequence (\cref{propmcg}); the explicit affine \(\alg{sl}(2)\) computation of \cref{sseccat-sl2} is new throughout.

\subsection{The \(n\)-point conformal block space and a Verlinde formula}
\label{sseccat-blocks}

For \(p_1,\dots,p_n\in I\), define
\begin{equation}
\label{eqcatblock}
\Bl(p_1,\dots,p_n) := \bigoplus_{i\in I} \Hom_\catC(p_1\otimes\cdots\otimes p_n,\ i\otimes i^\ast),
\end{equation}
which reduces to \KSW's{} \(W_p=\bigoplus_i\Hom_\catC(p,i\otimes i^\ast)\) at \(n=1\). Equivalently, fixing a ``caterpillar'' fusion tree, \(\Bl(p_1,\dots,p_n)\cong\bigoplus_{i_0,\dots,i_{n-1}\in I}\Hom_\catC(p_1\otimes i_0,i_1)\otimes\cdots\otimes\Hom_\catC(p_n\otimes i_{n-1},i_0)\), which is the categorical analogue,
  via the standard dictionary \(\ityp{A}{B}{C}\leftrightarrow\Hom_\catC(A\otimes B,C)\) between intertwining operators and \(3\)-point couplings \cite{HuaVer08}, of the space \(\Bl(\lambda_1,\dots,\lambda_n)\) of \cref{eqblock-def}. 
 We mention one point for later use (\cref{sseccat-S}). 
 This identification is an isomorphism of vector spaces \emph{depending on the choice of caterpillar fusion tree} (equivalently, on the fixed left-to-right associativity used to expand \(p_1\otimes\cdots\otimes p_n\)).
  A different choice of tree gives an isomorphic but not canonically identical basis, related to this one by the \(\Fmat\)-matrices of \cref{sseccat-S} below, and it is exactly this tree-dependence that has to be tracked correctly when comparing the caterpillar basis to the trivalent-tree normalization of \KSW's{} own \(1\)-point formulas.

\begin{prop}[Verlinde formula for \(n\) points on the torus]
\label{propverlinde}
\[
\dim\Bl(p_1,\dots,p_n) = \sum_{j\in I} \frac{\prod_{a=1}^n S_{p_aj}}{S_{0j}^{\,n}}.
\]
\end{prop}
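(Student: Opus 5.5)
We will compute \(\dim\Bl(p_1,\dots,p_n)\) from the caterpillar decomposition recorded just after \eqref{eqcatblock}, not from the definition with \(p_1\otimes\cdots\otimes p_n\) directly. In a modular tensor category each coupling space \(\Hom_\catC(p_a\otimes i_{a-1},i_a)\) has dimension \(N_{p_a i_{a-1}}^{\,i_a}\). The caterpillar isomorphism therefore gives
\[
\dim\Bl(p_1,\dots,p_n)=\sum_{i_0,\dots,i_{n-1}\in I}\ \prod_{a=1}^n N_{p_a i_{a-1}}^{\,i_a},\qquad i_n:=i_0 .
\]
Introduce the fusion matrices \((N_p)_{ij}:=N_{pi}^{\,j}\). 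The right side is then \(\tr(N_{p_1}N_{p_2}\cdots N_{p_n})\), the trace of a cyclic product. As a check on the caterpillar isomorphism, we will rederive it directly. Semisimplicity gives \(\Hom_\catC(X,i\otimes i^\ast)\cong\Hom_\catC(X\otimes i,i)\) by rigidity. We then decompose \(p_1\otimes\cdots\otimes p_n\otimes i\) step by step, starting with \(p_n\otimes i\), and read off \(\sum_{i}\dim\Hom(p_1\otimes\cdots\otimes p_n\otimes i,i)=\tr(N_{p_1}\cdots N_{p_n})\) up to the ordering convention. The ordering does not affect the trace, because the \(N_p\) commute (the fusion ring is commutative by the braiding).

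Next we diagonalize simultaneously using the Verlinde formula
\[
N_{pi}^{\,j}=\sum_{l\in I}\frac{S_{pl}S_{il}\overline{S_{jl}}}{S_{0l}},
\]
which holds in any modular tensor category with \(S\) the normalized unitary \(S\)-matrix (Bakalov--Kirillov). In matrix form this reads \(N_p=S\,\diag\bigl(S_{pl}/S_{0l}\bigr)_{l}\,S^{-1}\). Taking the product, the inner factors \(S^{-1}S\) cancel, which is the telescoping noted in the proof of \cref{propfusionchain}, and we obtain
\[
N_{p_1}\cdots N_{p_n}=S\,\diag\Bigl(\prod_a S_{p_al}/S_{0l}\Bigr)_l\,S^{-1}.
\]
Cyclicity of the trace removes the conjugation, which leaves \(\sum_{j\in I}\prod_a S_{p_aj}/S_{0j}^{\,n}\). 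This is the claim.

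The main obstacle is bookkeeping of conventions rather than depth. Three points need care: whether the coupling space \(\Hom(p\otimes i,j)\) has dimension \(N_{pi}^{\,j}\) or \(N_{pj^\ast}^{\,i^\ast}\); the complex conjugation in the Verlinde formula, where \(S^{-1}=\overline{S}^{\,t}\) and \(S_{i^\ast l}=\overline{S_{il}}\); and the choice of normalization of \(S\) relative to \KSW{} (6.16). I will first fix \(N_p\) as the matrix of multiplication by \(p\) in the Grothendieck ring \(K_0(\catC)\otimes\CC\). Its eigenvectors are the characters \(i\mapsto S_{il}/S_{0l}\), which makes the diagonalization a statement about the fusion ring alone. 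I will then verify that any duality twist only conjugates \(N_p\) by the permutation \(i\mapsto i^\ast\), which is invisible in the trace.

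Finally, we check the result against \cref{propfusionchain}. For \(\rep{L(k,0)}\) the matrix \(S\) is real symmetric, so the formula reduces verbatim to \eqref{eqverlinde-dim}. At \(n=1\) it recovers \KSW's{} identity \(\dim W_p=\sum_jS_{pj}/S_{0j}\).
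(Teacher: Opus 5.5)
Your proposal is correct and is essentially the paper's proof: the caterpillar decomposition gives $\sum_{i_0,\dots,i_{n-1}}\prod_a N_{p_ai_{a-1}}^{\,i_a}=\tr(N_{p_1}\cdots N_{p_n})$, and the paper's step of substituting the Verlinde formula and telescoping via orthogonality of $S$ is exactly your simultaneous diagonalization $N_p=S\,\diag(S_{pl}/S_{0l})_l\,S^{-1}$ followed by cyclicity of the trace. Your version is slightly more careful than the paper's: the paper writes the orthogonality as $\sum_iS_{ij}S_{ij'}=\delta_{jj'}$, which holds only for real $S$ (as for $\rep{L(k,0)}$), whereas your use of $\overline{S_{jl}}$ and $S^{-1}=\overline{S}^{\,t}$ covers a general modular tensor category.
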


\begin{proof}
By the caterpillar decomposition above, \(\dim\Bl(p_1,\dots,p_n)=\sum_{i_0,\dots,i_{n-1}}\prod_{a=1}^nN_{p_ai_{a-1}}^{i_a}\) (indices mod \(n\), \(i_n:=i_0\)), where \(N_{ab}^c=\dim\Hom_\catC(a\otimes b,c)\). Substituting the Verlinde formula for fusion coefficients, \(N_{ab}^c=\sum_jS_{aj}S_{bj}S_{cj}/S_{0j}\) (valid since \(\catC\) is modular; \(S\) is unitary and, for \(\rep{L(k,0)}\), real symmetric), and summing over \(i_0,\dots,i_{n-1}\) one at a time using column-orthogonality \(\sum_iS_{ij}S_{ij'}=\delta_{jj'}\) telescopes the product exactly as verified directly for \(n=1\) in \KSW{} and for \(n=2\) in the proof of \cref{propfusionchain} above, giving the stated formula.
\end{proof}

\subsection{The \(T\)-operator}
\label{sseccat-T}

\begin{prop}
\label{propcatT}
Define \(T\colon\Bl(p_1,\dots,p_n)\to\Bl(p_1,\dots,p_n)\) by \(T\big|_{\Hom_\catC(p_1\otimes\cdots\otimes p_n,i\otimes i^\ast)} = \dfrac{\theta_i}{\zeta}\cdot\mathrm{id}\). Then \(T\) is exactly the restriction, to the summand of \eqref{eqcatblock} indexed by \(i\), of the operator \(T^{(p_1\otimes\cdots\otimes p_n)}\) of \KSW{} Theorem 6.2 (which does not require its label to be simple).
\end{prop}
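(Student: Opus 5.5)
The plan is to unpack \KSW{} Theorem 6.2's definition of \(T^{(p)}\) and check that nothing in it uses simplicity of \(p\). Recall that \KSW{} define \(T^{(p)}\) on \(W_p=\bigoplus_i\Hom_\catC(p,i\otimes i^\ast)\) summand by summand. On \(\Hom_\catC(p,i\otimes i^\ast)\) it acts by the scalar \(\theta_i/\zeta\). Categorically, this is post-composition with the twist \(\theta_i\otimes\mathrm{id}_{i^\ast}\) on the first tensor factor of the target, normalized by \(\zeta^{-1}\). In the trivalent basis \(\lambda^\alpha_{(p,i)i}\) this gives their equation (6.9) at \(\iota=i\).

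\textbf{Step 1.} Write \(T^{(q)}\) for an arbitrary, not necessarily simple, object \(q\) as
\[
\phi\mapsto\zeta^{-1}(\theta_i\otimes\mathrm{id}_{i^\ast})\circ\phi
\]
on \(\Hom_\catC(q,i\otimes i^\ast)\). This expression involves only the target \(i\otimes i^\ast\), the twist of the simple object \(i\), and the global scalar \(\zeta\). The source \(q\) enters only as the domain of \(\phi\). Hence the definition makes sense verbatim for \(q=p_1\otimes\cdots\otimes p_n\).

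\textbf{Step 2.} Since \(i\) is simple, \(\theta_i\) is the scalar \(\theta_i\cdot\mathrm{id}_i\). Post-composition therefore multiplies every \(\phi\) in the summand by \(\theta_i/\zeta\), independently of \(q\). This is exactly the operator in the statement.

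\textbf{Step 3.} Check that the identification is compatible with the caterpillar isomorphism of \cref{sseccat-blocks}. The caterpillar decomposition changes only the description of the source \(p_1\otimes\cdots\otimes p_n\), through the \(\Fmat\)-moves and composition of couplings, while preserving the index \(i\) labelling the summand. A scalar acting on each \(i\)-summand is therefore preserved under any change of fusion tree. In the caterpillar basis, \(T\) acts on \(\Hom(p_1\otimes i_0,i_1)\otimes\cdots\otimes\Hom(p_n\otimes i_{n-1},i_0)\) by \(\theta_{i_0}/\zeta\). As a consistency check, at \(n=2\), \(p_1=p_2=0\), this reproduces \(e(h_{i}-c/24)\), in agreement with the use of \(p=0\) in \cref{thmextremal}.

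\textbf{Main obstacle.} The hard part is pinning down that \KSW{}'s \(T^{(p)}\) really is post-composition by the twist on \(i\). Their formula (6.9) could instead be written with \(\theta_p\) factors, or in a form where twisting the source and the target differ. Balancing gives \(\theta_{i\otimes i^\ast}=c_{i^\ast,i}c_{i,i^\ast}(\theta_i\otimes\theta_{i^\ast})\), which involves a double braiding. Before concluding, I would verify from their (6.9) that the coefficient is \(\theta_i\) alone, i.e.\ the one-sided Dehn twist around the \(\tau\)-cycle, and not a full twist of the target. If their formula did contain a \(p\)-dependent factor, the statement would instead need \(\theta\) of the reducible object \(p_1\otimes\cdots\otimes p_n\). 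That is not a scalar, so the claimed identification with the scalar operator would then fail as stated.
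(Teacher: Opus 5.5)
Your proposal is correct and is essentially the paper's argument: the diagram of \KSW{} equation (6.7) defining \(T^{(p)}\) acts only on the \(i,i^\ast\)-loop, via \(\theta_i\) and the fixed normalization \(\zeta\). So the possibly reducible source \(p_1\otimes\cdots\otimes p_n\) never enters, and simplicity of \(i\) alone makes \(T^{(p)}\) the scalar \(\theta_i/\zeta\) on each summand. The point you flag as the main obstacle is exactly what the paper reads off from (6.7), and it resolves in your favour: the twist is one-sided on the \(i\)-strand, and \(p\)-dependence through \(\theta_p\) enters only via \(S^4\). Your caterpillar-compatibility Step 3 is a harmless extra that the statement does not need.
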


\begin{proof}
Immediate from \KSW{} equation (6.7): the diagram defining \(T^{(p)}\) acts on the \(i,i^\ast\)-loop alone, via the twist \(\theta_i\) and the normalization \(\zeta\), regardless of what is attached along the remaining leg. 
 Taking that leg to be \(p_1\otimes\cdots\otimes p_n\) rather than a single simple object changes nothing in the construction. The scalar \(\zeta=e(c/24)\) is not a choice made here. 
 It is the same central-charge normalization constant fixed once and for all in \KSW{} equation (6.7) (needed there already at \(n=1\) to match the multiplier system \(\nu_{h_i}\) of the \(1\)-point \vvmf{}s), and we simply inherit it unchanged. 
  It is a fixed scalar, not an operator, for every \(n\).
\end{proof}

\subsection{The \(S\)-operator}
\label{sseccat-S}

\begin{prop}[The abstract categorical \(S\)-operator]
\label{propcatS}
Define \(S\colon\Bl(p_1,\dots,p_n)\to\Bl(p_1,\dots,p_n)\) on the summand indexed by \(i\) by encircling the \(i,i^\ast\)-loop with a \(j\)-loop, weighted \(d_j/D\) and summed over \(j\), exactly as in \KSW{} equation (6.10) with \(p\) replaced by \(p_1\otimes\cdots\otimes p_n\).
  Then \(S,T\) satisfy \(ST^3=S^2\) \emph{in that same convention}, and \(S\) is the restriction to \eqref{eqcatblock} of \(S^{(p_1\otimes\cdots\otimes p_n)}\) of \KSW{} Theorem 6.2, again valid without simplicity of the label. 
 Writing \(\alpha_a\) for the multiplicity index at leg \(a\), this has the abstract shape
\begin{equation}
\label{eqSn-general}
S_{(i_0,\vec\alpha),(j_0,\vec\beta)} \;=\; \sum_{i_1,\dots,i_{n-1}}\ \prod_{a=1}^n 
\Bigl[\mathcal G\Bigr],
\end{equation}
where \(\mathcal G\) is the leg-\(a\) fusing and braiding data relating  
\((i_{a-1},i_a,\alpha_a)\) to \((j_{a-1},j_a,\beta_a)\),  
a sum of products of \(n\) further \(\Fmat,\Gmat,\Rmat\)-matrix factors. 
\end{prop}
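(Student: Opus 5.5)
The plan mirrors the proof of \cref{propcatT}: reduce everything to the observation that KSW's construction of \(S^{(p)}\) never uses simplicity of \(p\). I then read off the relation \(ST^3=S^2\) and the component formula \eqref{eqSn-general} by choosing bases.

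\emph{Well-definedness and identification.} I would first check that KSW equation (6.10) makes sense for an arbitrary object \(P=p_1\otimes\cdots\otimes p_n\). The diagram defining \(S^{(p)}\) takes a morphism \(\varphi\in\Hom_\catC(P,i\otimes i^\ast)\) and closes the \(i\)-strand into a loop. It then encircles that loop by a \(j\)-loop, sums over \(j\) with weight \(d_j/D\), and re-reads the result as an element of \(\bigoplus_{i'}\Hom_\catC(P,i'\otimes i'^\ast)\). This uses only the ribbon structure, semisimplicity and the decomposition of the identity on \(X\otimes X^\ast\) into simples; the leg \(P\) is carried along as an inert strand. Semisimplicity of \(\catC\) lets us write \(P\cong\bigoplus_q N_P^q\,q\). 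Naturality of all structure morphisms in the \(P\)-leg then shows that \(S^{(P)}\) equals \(\bigoplus_q \mathrm{id}_{\Hom(P,q)}\otimes S^{(q)}\) under the canonical isomorphism \(\Bl(P)\cong\bigoplus_q\Hom_\catC(P,q)\otimes W_q\). The same holds for \(T\) by \cref{propcatT}.

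\emph{The relation.} Since both \(S\) and \(T\) decompose compatibly along \(q\), the relation \(ST^3=S^2\) on \(\Bl(p_1,\dots,p_n)\) follows summand-by-summand from KSW Theorem 6.2 for the simple labels \(q\), in exactly KSW's convention. The same naturality argument gives the restriction claim, so nothing new is needed beyond the decomposition step.

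\emph{The explicit shape \eqref{eqSn-general}.} I would fix the caterpillar basis of \cref{sseccat-blocks} on both sides. To evaluate the encircling \(j\)-loop, I would slide it successively past the legs \(p_1,\dots,p_n\). Each passage is rewritten using one \(\Fmat\)/\(\Gmat\) move to re-associate leg \(a\) onto the new loop, and one \(\Rmat\) to braid it across. Summing over the intermediate channels \(i_1,\dots,i_{n-1}\) then produces the product over \(a\) of local factors \(\mathcal G\) indexed by \((i_{a-1},i_a,\alpha_a)\) and \((j_{a-1},j_a,\beta_a)\).

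\emph{Main obstacle.} The hardest step is the bookkeeping in this last part. Tree-dependence of the caterpillar identification means each move must be recorded with consistent normalizations against KSW's trivalent conventions, which is where I expect sign and \(d_i\)-factor errors. Since the statement only asserts the abstract shape, I would settle for establishing the factorized form without fixing every normalization explicitly.
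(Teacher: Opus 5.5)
Your argument is correct, but for the relation it takes a different route from the paper. The paper asserts that the formal ingredients of KSW's proof of Theorem 6.2 (naturality of the twist, the encircling identity, non-degeneracy of \(S\)) never use simplicity of the encircled object, and then re-runs that proof with \(p\rightsquigarrow P=p_1\otimes\cdots\otimes p_n\). You instead prove a block decomposition. Under the composition isomorphism \(\Hom_\catC(P,i\otimes i^\ast)\cong\bigoplus_{q\in I}\Hom_\catC(q,i\otimes i^\ast)\otimes\Hom_\catC(P,q)\), you get \(S^{(P)}=\bigoplus_q S^{(q)}\otimes\mathrm{id}_{\Hom_\catC(P,q)}\), and likewise for \(T\). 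The key step is that any \(f\colon P\to q\) placed on the \(P\)-strand slides to the source past crossings and twists, so \(S^{(P)}(g\circ f)=S^{(q)}(g)\circ f\). You then import KSW Theorem 6.2 for simple \(q\) verbatim.

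Your route buys several things the paper's does not. Nothing in KSW's proof has to be re-inspected. Whatever relation KSW actually prove transfers unchanged, so you are insensitive to the exact convention behind the quoted ``\(ST^3=S^2\)''. The paper's later caution that \(\theta_P^{-1}\) is an operator rather than a scalar becomes transparent, since it acts by \(\theta_q^{-1}\) on the \(q\)-block. Finally, it gives precise meaning to the paper's ``single \(\Gmat^{(p\,i\,r)j}\)-factor with composite \(p\)'', which in KSW's conventions is only defined through bases of couplings of simple objects.

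For the explicit shape, the paper substitutes the composite label into KSW's derivation (6.10)--(6.15) and expands that one \(\Gmat\)-factor along the caterpillar tree via pentagon/hexagon identities. Your leg-by-leg sliding of the encircling loop is a different and less controlled mechanism. The \(j\)-strand is linked with the \(i\)-loop, and pushing it past a leg vertex changes which strands it encloses. So a ``passage'' is not a single local \(\Fmat\)/\(\Gmat\) move plus one \(\Rmat\) without further summations.

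Your own decomposition gives a cleaner path. Conjugate \(\bigoplus_q S^{(q)}\otimes\mathrm{id}\) by the \(\Fmat\)-move change of basis from the caterpillar basis to the basis that first fuses \(p_1,\dots,p_n\) into \(q\). This expresses the matrix as \(\sum_q[\Fmat\text{-factors}]\,S^{(q)}_{i_0j_0}\,[\Fmat^{-1}\text{-factors}]\), with \(S^{(q)}\) given by KSW (6.8). That is the paper's route made precise, and since the statement only claims an abstract shape, it suffices at the paper's own level of detail.
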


In the Proposition above, 
the orientation of the encircling loop and the choice of which strand is
 over- versus under-crossing (i.e., the ribbon framing) are exactly those fixed in 
\KSW{} \S6.1 preceding their equation (6.10), and we use no other convention anywhere 
in \cref{seccategorical}. 
  The categorical \(S\)-operator itself is well defined and its formal
 properties are established. Its matrix in a chosen fusion-chain basis is a 
separate matter (\cref{rmkcatS-honest} discusses the representation this 
gives of \(\Gamma_n\)).  
 \(S\)-matrix, on the caterpillar basis (\cref{sseccat-blocks}) of 
\(\Hom_\catC(p_1\otimes i_0,i_1)\otimes\cdots\otimes\Hom_\catC(p_n\otimes i_{n-1},i_0)\), 
is obtained in principle by expanding the ribbon graph defining \(S\) through the
 sequence of \(\Fmat\)-, \(\Gmat\)-, and \(\Rmat\)-moves relating that basis to the 
trivalent tree the encircling construction is naturally computed in. 
  Matrix factors are of exactly the shape appearing inside the proof of \KSW{} Theorem 6.3, one for each of the \(n\) points, 
but an explicit closed formula realizing \eqref{eqSn-general} is not established here: assembling the bracketed leg-data explicitly requires tracking the ribbon-graph framing and the precise \(\Fmat\)-move relating the caterpillar fusion tree to whatever trivalent tree \KSW's{} own formulas are normalized against, and we carry this out - and report exactly where it goes wrong - for \(n=2\) in \cref{sseccat-sl2} below.

\begin{rmk}[Actual versus projective]
\label{rmkcatS-honest}
A modular tensor category's mapping class group action is in general only \emph{projective}
 (a framing/central-charge anomaly obstructs lifting it to a representation without a further 
choice). 
 The normalization by \(\zeta\) already built into \(T\) (\cref{propcatT} above) is inherited unchanged from \KSW's{} own \(n=1\) resolution of exactly this issue, and we make no further choice beyond it anywhere in \cref{seccategorical}.
   Consequently \(S,T,B_1,\dots,B_{n-1}\) define a (linear, not projective) representation of \(\Gamma_n\) precisely because they already do so at \(n=1\) in \KSW, and the construction here reuses that same fixed normalization at every marked point rather than introducing a fresh one.
\end{rmk}

\begin{proof}
The construction and the identity \(ST^3=S^2\) are formal consequences of the ribbon and modular structure of \(\catC\) (naturality of the twist, the Hopf-link/encircling identity, and non-degeneracy of \(S\)) exactly as in the proof of \KSW{} Theorem 6.2 (itself a specialization of Bakalov-Kirillov \cite{BakKir}*{Theorem 3.1.17, \S5.5}).
  None of these ingredients use simplicity of the object being encircled \emph{around}, only of the objects \(i,j\) being encircled \emph{by}. For the explicit formula, apply the derivation of \KSW{} equation (6.8) (their equations (6.10)-(6.15)) similar with \(p\rightsquigarrow p_1\otimes\cdots\otimes p_n\). 
  Expanding the resulting single \(\Gmat^{(p\,i\,r)j}\)-factor (which involves the composite object \(p\)) along the caterpillar fusion tree of \(p_1,\dots,p_n\) via the pentagon/hexagon identities replaces it with the product of \(n\) elementary \(\Fmat,\Gmat,\Rmat\)-factors, one per leg, as claimed.
\end{proof}

We caution that, unlike \(T\), the naive relation \(S^4=\theta_{p_1\otimes\cdots\otimes p_n}^{-1}\) of \KSW{} Theorem 6.2 requires some care when \(n\ge2\).
   The ribbon twist of a tensor product is \(\theta_{p_1\otimes p_2}=(\theta_{p_1}\otimes\theta_{p_2})\circ c_{p_2,p_1}\circ c_{p_1,p_2}\) (the double braiding correction), thus \(\theta_{p_1\otimes\cdots\otimes p_n}\) need not act as a single scalar on \(\Bl(p_1,\dots,p_n)\) when the \(p_a\) do not all commute with each other's braiding.
  The relation \(S^4=\theta_{p_1\otimes\cdots\otimes p_n}^{-1}\) continues to hold as an operator identity (this is again formal, from naturality), but \(\theta_{p_1\otimes\cdots\otimes p_n}^{-1}\) is now itself an operator on \(\Bl(p_1,\dots,p_n)\) rather than a scalar, built from the individual twists and the braiding matrices \(\Rmat\) of \cref{sseccat-braid} below.

\subsection{Braiding operators and the mapping class group of the \(n\)-punctured torus}
\label{sseccat-braid}

For \(1\le a\le n-1\), define \(B_a\colon\Bl(\dots,p_a,p_{a+1},\dots)\to\Bl(\dots,p_{a+1},p_a,\dots)\) by precomposition with the braiding isomorphism \(c_{p_a,p_{a+1}}\colon p_a\otimes p_{a+1}\to p_{a+1}\otimes p_a\).
  On the caterpillar basis this is given, exactly as for the \(3\)-point coupling spaces of \KSW{} equation (6.6), by the matrix \(\Rmat^{(p_ap_{a+1})\bullet}\) acting on the pair of legs \(a,a+1\) after an \(\Fmat\)-move brings them adjacent in the fusion tree (they already are, in the caterpillar tree) and an \(\Fmat^{-1}\)-move restores the tree afterward. When \(p_a=p_{a+1}=:p\), \(B_a\) is an operator on \(\Bl(\dots,p,p,\dots)\) itself.

Fix a small disk \(D\subset\surf11\) around the single marked point and a further embedding 
of \(n\) disjoint points into \(D\), giving the forgetful map \(f\colon\surf1n\to\surf11\) 
collapsing \(D\) (with its \(n\) marked points) back to the single marked point. 
Let \(S,T\) be the mapping classes of \(\surf1n\) equal to the identity on \(D\), 
hence fixing each of the \(n\) marked points there individually, 
and equal, on the complementary bulk \(\surf1n\setminus D\), to (a representative of) the standard \(S\), respectively \(T\), generator of \(\PMod(\surf11)=\Bthree\) after collapsing \(D\) to the single puncture.
Let \(B_a\), \(1\le a\le n-1\), be the half-twist exchanging the \(a\)-th and \((a+1)\)-th marked points within \(D\), supported in an even smaller sub-disk and trivial elsewhere. 
Since each \(B_a\) exchanges two of the \(n\) marked points, 
remaining distinct points of the surface regardless of how the corresponding objects 
happen to be labelled, \(B_a\notin\PMod(\surf1n)\) even when \(p_a=p_{a+1}\). 
\begin{prop}
\label{propmcg}
The operators \(S,T,B_1,\dots,B_{n-1}\) generate a representation, on 
\(\bigoplus_{p_1,\dots,p_n}\Bl(p_1,\dots,p_n)\), of the subgroup 
\(\Gamma_n:=\langle S,T,B_1,\dots,B_{n-1}\rangle\) of the full (label-permuting) 
mapping class group \(\MCG(\surf1n)\) (the actual geometric subgroup generated by 
these specific mapping classes, and nothing more abstractly presented) rather than of 
\(\PMod(\surf1n)\) itself.
\end{prop}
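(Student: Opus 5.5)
The plan is to realize the direct sum $\bigoplus_{p_1,\dots,p_n}\Bl(p_1,\dots,p_n)$ as the Bakalov--Kirillov modular functor value on the torus with $n$ boundary circles (or, equivalently, on $\surf11$ with the single boundary circle labelled by $p_1\otimes\cdots\otimes p_n$). We then obtain the representation of $\Gamma_n$ by restricting the modular-functor action of $\MCG(\surf1n)$ to the geometric subgroup generated by the listed mapping classes. In this approach the operators $S,T,B_a$ are not required to satisfy some presentation of $\Gamma_n$ checked by hand. Instead, each is shown to coincide with the functorial image of the corresponding mapping class; the relations among the classes are then automatically respected, because the modular functor is a functor on the mapping class groupoid.

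\textbf{Key steps.}
\begin{enumerate}
\item First identify \eqref{eqcatblock} with the modular-functor space. Using the gluing axiom, cut $\surf1n$ along the boundary $\partial D$. The bulk piece is a torus with one hole labelled by an object $X$. The disk piece is a sphere with $n+1$ holes, labelled $p_1,\dots,p_n$ and $X^\ast$.
\item Summing over simple $X$ and using semisimplicity gives
\[
\bigoplus_X \Hom(p_1\otimes\cdots\otimes p_n,X)\otimes\bigoplus_i\Hom(X,i\otimes i^\ast)\;\cong\;\bigoplus_i\Hom(p_1\otimes\cdots\otimes p_n,i\otimes i^\ast),
\]
which is exactly \eqref{eqcatblock}.
\item Under this splitting, the classes $S,T$ are supported on the bulk and are the identity on $D$. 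By functoriality they therefore act by $\mathrm{id}\otimes S^{(X)}$ and $\mathrm{id}\otimes T^{(X)}$. Reassembled, this is precisely the operator $S^{(p_1\otimes\cdots\otimes p_n)}$ (respectively $T^{(p_1\otimes\cdots\otimes p_n)}$) of \cref{propcatS} (respectively \cref{propcatT}). We use that those propositions do not require the label to be simple, so naturality in $X$ lets us replace $\bigoplus_X$ by the composite object.
\item The half-twist $B_a$ is supported inside $D$. It therefore acts only on the $\Hom(p_1\otimes\cdots\otimes p_n,X)$ factor, by the modular-functor image of a half-twist on a punctured sphere. That image is the standard braiding $c_{p_a,p_{a+1}}$, i.e.\ precomposition as in \cref{sseccat-braid}. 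Because $B_a$ permutes marked points, it maps the summand labelled $(\dots,p_a,p_{a+1},\dots)$ to the summand labelled $(\dots,p_{a+1},p_a,\dots)$. This is why the statement needs the direct sum over all labellings and lands in $\MCG(\surf1n)$ rather than $\PMod(\surf1n)$.
\end{enumerate}

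\textbf{Main obstacle: linearity versus projectivity.} I expect the main obstacle to be that the Bakalov--Kirillov action of $\MCG(\surf1n)$ is a priori only projective (compare \cref{rmkcatS-honest}). My first attempt would be to avoid any global anomaly argument and instead argue through the gluing decomposition. Every relation in $\Gamma_n$ factors through:
\begin{itemize}
\item relations among $S,T$ on the bulk, which hold linearly with the $\zeta$-normalization exactly as in \KSW{} Theorem 6.2 (including $ST^3=S^2$ from \cref{propcatS}, and $S^4$ equal to the operator $\theta^{-1}_{p_1\otimes\cdots\otimes p_n}$);
\item braid relations among the $B_a$ on the sphere piece, which hold linearly by the hexagon identities;
\item commutation of bulk-supported and disk-supported classes, which is immediate from the tensor decomposition;
\item relations mixing the twist of $\partial D$ with braidings, such as the full twist $(B_1\cdots B_{n-1})^n$ being a Dehn twist about $\partial D$.
\end{itemize}
The last kind of relation is where I would check carefully that the operator $\theta_{p_1\otimes\cdots\otimes p_n}$, expanded via the double-braiding formula after \cref{propcatS}, matches the bulk-side operator $S^{-4}$ exactly, with no scalar discrepancy.

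If a residual central-charge phase appears at that step, the fallback is to absorb it into the same fixed $\zeta$ used in \cref{propcatT}. That is legitimate only if the phase is independent of $n$, which I would verify on the $\alg{sl}(2)$ example.
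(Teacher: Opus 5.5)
Your overall route is the paper's. Its proof simply identifies $S,T,B_a$ with the Bakalov--Kirillov modular-functor images of the corresponding mapping classes and restricts to $\Gamma_n$; your gluing along $\partial D$ is a faithful elaboration of that.

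The step that fails is the fourth family of relations in your linearity check. By the balancing axiom, $\theta_{p_1\otimes\cdots\otimes p_n}=\Delta^2\circ(\theta_{p_1}\otimes\cdots\otimes\theta_{p_n})$, where $\Delta^2$ is the full twist built from braidings. So on the $X$-component $\Hom_{\catC}(p_1\otimes\cdots\otimes p_n,X)\otimes\Hom_{\catC}(X,i\otimes i^\ast)$ the word $(B_1\cdots B_{n-1})^n$ acts by $\theta_X\prod_a\theta_{p_a}^{-1}$. By contrast, $S^{-4}=\theta_{p_1\otimes\cdots\otimes p_n}$ acts by $\theta_X$ (up to one common orientation convention). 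The two differ by the scalar $\prod_a\theta_{p_a}$, which is generically not $1$: for $\catC=\rep{L(2,0)}$ and $p_1=p_2=1$ it equals $e(3/8)$ on the $4$-dimensional space $\Bl(1,1)$. Your fallback cannot repair this. The discrepancy depends on the labels and varies from summand to summand of $\bigoplus\Bl(p_1,\dots,p_n)$, whereas $\zeta$ is a single label-independent constant that enters only $T$, not $S^4$ or the $B_a$.

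The source of the failure is the choice of group. The relation $T_{\partial D}=(B_1\cdots B_{n-1})^n$ holds when the marked points are genuine, unframed punctures, and for that reading of $\MCG(\surf1n)$ the asserted linear representation does not exist in general. The modular functor, however, lives on extended surfaces whose marked points are framed (equivalently, boundary circles). There the twist $t_a$ about a small loop around the $a$-th point is non-trivial and acts by $\theta_{p_a}$, and the correct relation is $T_{\partial D}=(B_1\cdots B_{n-1})^n\,t_1\cdots t_n$. This framed convention is already implicit in the paper's identification of the once-marked torus group with $\Bthree$ rather than $\SLTZ$, and in that reading your fourth family is simply not a relation of $\Gamma_n$.

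You also need to justify, not assert, that your list of relations is complete. The kernel of the gluing map $\MCG(\Sigma_1^1)\times\MCG(D\setminus\text{holes})\to\MCG(\Sigma_1^n)$ is generated by $(T_{\partial D},T_{\partial D}^{-1})$. Moreover $\langle B_a\rangle\cong\grp{B}_n$ meets the central subgroup $\langle t_1,\dots,t_n\rangle$ trivially, so $\langle S,T\rangle\cap\langle B_a\rangle=1$ and $\Gamma_n\cong\Bthree\times\grp{B}_n$. Then only three families are needed, all of which hold exactly, so no anomaly argument is required:
\begin{enumerate}
\item the $\Bthree$ relation among $S,T$, linear by \KSW{} Theorem 6.2 on each $X$-summand;
\item the braid relations among the $B_a$;
\item commutation of $S,T$ with the $B_a$.
\end{enumerate}
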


\begin{proof}
This is the categorical (Reshetikhin-Turaev / modular functor) analogue
  of the topological fact that mapping classes supported near a neighbourhood of all \(n\) punctures together (realizing \(S,T\)) and mapping classes exchanging two adjacent punctures while fixing the rest (realizing \(B_1,\dots,B_{n-1}\)) generate the subgroup \(\Gamma_n\) just described, subject to the braid, hexagon and pentagon relations already built into \(\catC\) 
 (see \cite{FarMar}*{\S9.3, \S4.2} for the topological statement and \cite{BakKir}*{\S5} 
for the general construction, for an arbitrary modular tensor category, of a 
representation of the mapping class group of any marked surface 
(their ``modular functor''), of which \KSW{} Theorem 6.2 is the case \(g=1,n=1\) 
and \cref{propcatT,propcatS} together with the present braiding operators are the 
case \(g=1\), general \(n\)).
\end{proof}

\begin{rmk}[\(\Gamma_n\) versus \(\PMod(\surf1n)\)]
\label{rmkmcg-vs-pmod}
Recall the standard extension \(1\to\PMod(\surf1n)\to\MCG(\surf1n)\to S_n\to1\), the quotient map recording which permutation of the \(n\) marked points a mapping class induces. Each \(B_a\) projects to the transposition \((a,a+1)\), while \(S,T\) 
(fixing every point of \(D\) individually)
  project to the identity, so \(S,T\in\Gamma_n\cap\PMod(\surf1n)\), as does every \emph{even} word in the \(B_a\), for instance each full twist \(B_a^2\) (the standard pure-braid-group generator obtained by dragging one puncture fully around its neighbour). By the Birman exact sequence
\[
1 \lra \pi_1\bigl(\mathrm{UConf}_{n-1}(\surf11)\bigr) \lra \PMod(\surf1n) \xrightarrow{\ f_\ast\ } \PMod(\surf11)=\Bthree \lra 1,
\]
\(f_\ast\) sends \(S,T\) to the generators of \(\Bthree\) and every \(B_a^2\) 
 (supported inside \(D\), hence collapsed entirely by \(f\)) 
  to the identity, so \(f_\ast\) restricted to \(\Gamma_n\cap\PMod(\surf1n)\) is already onto \(\Bthree\). We do not compute here how much of the kernel \(\pi_1(\mathrm{UConf}_{n-1}(\surf11))\) 
 (which for \(n\ge2\) is generated not only by such even words in the \(B_a\) 
but also by \emph{point-pushing} maps carrying one puncture around a non-contractible loop
 of the underlying torus, necessarily leaving any fixed disk \(D\)) 
  is captured this way.
  \(\Gamma_n\cap\PMod(\surf1n)\) is accordingly a proper subgroup of \(\PMod(\surf1n)\) in general, just as \(\Gamma_n\) itself is a proper subgroup of \(\MCG(\surf1n)\).  
 We have not constructed operators realizing the point-pushing generators, 
nor operators outside \(\langle S,T,B_1,\dots,B_{n-1}\rangle\) more generally. 
 Whether punctures are regarded as labelled (so that \(\MCG(\surf1n)\) permutes the summands of \(\bigoplus_{p_1,\dots,p_n}\Bl(p_1,\dots,p_n)\) accordingly) or unordered is exactly the choice of ambient group made explicit in \cref{propmcg}. 
 The caterpillar block space \(\Bl(p_1,\dots,p_n)\) of \cref{sseccat-blocks} is itself only meaningful for a fixed ordered tuple, which is the labelled convention we adopt throughout.
\end{rmk}

\subsection{The \(\alg{sl}(2)\) example}
\label{sseccat-sl2}

Take \(\catC=\rep{L(k,0)}\) as in \KSW{} \S6.1: \(I=\set{0,\dots,k}\), \(h_r=r(r+2)/(4(k+2))\), \(c=3k/(k+2)\), and (\KSW{} equations (6.16)-(6.20))
\[
S_{i,j}=\sqrt{\tfrac2{k+2}}\sin\Bigl(\tfrac{\pi(i+1)(j+1)}{k+2}\Bigr),\quad \theta_r=e(h_r),\quad \zeta=e\bigl(\tfrac c{24}\bigr),\quad \Rmat^{(rs)t}=(-1)^{r+s-t}e\Bigl(\tfrac{h_r+h_s-h_t}2\Bigr),
\]
with \(\Fmat^{(rst)u}_{pq}\) the quantum \(6j\)-symbol of \KSW{} equation (6.18)-(6.19) and \(\Gmat\) related to \(\Fmat,\Rmat\) by \KSW{} equation (6.20). Since all \(3\)-point coupling spaces are at most one-dimensional, we drop the multiplicity indices \(\alpha,\beta,\dots\).

For \(n=2\) and \(p_1=p_2=p\), \cref{eqcatblock} reads \(\Bl(p,p)=\bigoplus_{i_0}\Hom_\catC(p\otimes i_0,i_1)\otimes\Hom_\catC(p\otimes i_1,i_0)\), one-dimensional whenever \(N^{i_1}_{pi_0}=N^{i_0}_{pi_1}=1\) 
 (exactly the two-point fusion chains of \cref{ssectwopoint-fusion} under 
\(\lambda_1=\lambda_2=p\), \(\mu_0=i_0,\mu_1=i_1\))  and by \cref{propverlinde}
 together with the computation in the proof of \cref{propfusionchain}, \(\dim\Bl(k,k)=k+1\), the categorical counterpart of \cref{thmextremal}.

One can try to specialize \cref{propcatS}'s 
abstract formula \eqref{eqSn-general} to an explicit two-point matrix.  
  \cref{propcatS} establishes the categorical 
\(S\)-operator, and the abstract shape \eqref{eqSn-general} of its fusion-chain 
matrix entries, rigorously. The explicit affine \(\alg{sl}(2)\) matrix form of  
\(S\) beyond that abstract construction remains open.

%%%%%%%%%%%%%%%%%%%%%%%%%%%%%%%%%%%%%%%%%%%%%%%%%%%%%%%%%%%%%%%
\section{Concluding remarks}
\label{secconclusion}

We have extended the \KSW{} programme for chiral torus \(1\)-point functions of \(L(k,0)\)
 to torus \(n\)-point functions built from chains of intertwining operators.
  \cref{secprelim} 
sets up the objects and establishes ellipticity on each local branch.
  \cref{secstructure} 
shows that every coefficient (Laurent, or in general Puiseux, according as the relevant OPE
 exponents are integral or not) of the resulting elliptic vector-valued modular objects 
is a \vvmf{} governed by the \(R\)-module theory already developed in \KSW{} \S3, 
via a mechanism (\cref{thmtaylor}) that requires no Jacobi-form index correction, 
precisely because genus-one \(n\)-point functions are elliptic rather than quasi-periodic 
in each point. 
 \cref{secsl2} carries this out completely for the affine \(\alg{sl}(2)\) 
extremal two-point channel 
 (where the relevant exponents are checked directly to be integral, 
thus the expansion there is an ordinary Laurent expansion throughout)  
  producing 
a rank-\((k+1)\) family (\cref{thmextremal}) whose leading term is the ordinary 
\(\widehat{\alg{sl}}(2)_k\) character representation. 
 \cref{seccategorical} 
extends the modular-tensor-category formulation to representations of the pure mapping 
class group of the \(n\)-punctured torus, with an explicit Verlinde-type dimension formula 
(\cref{propverlinde}) and \(S,T\)-operators (\cref{propcatT,propcatS}).

Several directions are left open, matching in spirit the scope of \KSW{} itself 
(whose explicit classification stops at \vvmf{} dimension four and whose 
non-congruence existence theorem is restricted to prime-power-shifted levels): 
a complete descendant-by-descendant computation of the two-point Laurent coefficients 
beyond the leading order (\cref{rmkgeneraln-outlook}); the non-extremal two-point channels,
 where several fusion channels compete already at leading order; the fully general 
\(n\)-point extremal chain, whose fusion-chain combinatorics we have described 
(\cref{rmkgeneraln-outlook}) but not solved beyond dimension counting; a systematic 
congruence/non-congruence analysis of the resulting representations, in the spirit of 
\KSW{} Theorem 5.6. 
 So as to obtain a verified closed-form two-point categorical \(S\)-matrix 
(for the extremal channel and, beyond it, for general two-point channels). We also note 
that the coordinatewise structure of \cref{propcoordwise,propnpoint-Lemma32} and the
 Laurent-coefficient mechanism of \cref{thmtaylor} are not special to \(\alg{sl}(2)\)
 and apply to any rational \ctwo \(V\) satisfying the standing hypotheses of \KSW{} \S2.2, 
which is the setting in which \cref{secprelim,secstructure} are stated. 
 The affine 
\(\alg{sl}(2)\) computations of \cref{secsl2,seccategorical} are the first worked example, 
in the same sense that \KSW{} is the first worked example of the \(1\)-point theory of 
Huang \cite{Huang-IntModInv}. Throughout, we reserve \(\lambda_i\) for the fixed external 
module label at insertion \(i\) (part of the data of the fusion chain \(\vY\) itself) and 
\(\nu\) for a variable, summed-over intermediate fusion-channel label arising inside 
an operator product expansion (\cref{eqiterate}, \cref{thmope-reduction}). 
  The two never 
denote the same kind of object, even where, as for the extremal chain of \cref{secsl2}, 
a specific value of \(\nu\) (there \(\nu=0\)) coincides numerically with a specific 
admissible \(\lambda\).

Finally, we note a connection to a complementary line of work: torus \(n\)-point 
functions can also be constructed by iteratively self-sewing a chain of \(n\) 
three-punctured spheres, in the sense of the sewing formalism developed for genus-two 
and higher correlation functions by Mason-Tuite \cite{MasTui-genus2free} and Tuite-Zuevsky 
\cite{TuiZue-Szego,TuiZue-genus2I}; specializing that construction to a chain of \(n\) 
spheres recovers \eqref{eqnpf-def} directly (inserting a resolution of the identity, with 
respect to the invariant bilinear form, at each of the \(n\) sewn tubes reproduces the 
trace over each channel \(W_{\mu_i}\)), and the associativity of iterated sewing is an 
alternative route to \cref{thmhuang} and to the operator product expansion of 
\cref{thmope-reduction}. We expect this sewing perspective, together with the higher-genus 
extensions of the same formalism, to be the natural setting in which to extend the present 
\(n\)-point theory to higher-genus torus-type \(1\)-point theories for arbitrary Fuchsian 
groups.

The material of this paper is also useful in other areas of mathematical physics 
\cite{Frohlich2009gb, RSZ, Zu2, Zu3, Zu, Zu4, Zu1} 
and condensed matter theory \cite{kmmzz, KVZ, ZK, BVZ, SZ, ZA}.

%%%%%%%%%%%%%%%%%%%%%%%%%%%%%%%%%%%%%%%%%%%%%%%%%%%%%%%%%%%%%%%%%%%%%%%%%%%

\subsection*{Acknowledgements} 
The author is supported by the Institute of Mathematics, Academy of Sciences
of the Czech Republic (RVO 67985840). We thank M. Krauel, G. Mason, and M.P. Tuite for 
  previous discussions. 

\medskip
\noindent\textbf{Data Availability.}
Data sharing is not applicable to this article as no datasets were generated
or analysed during the current study.

\medskip
\noindent\textbf{Declarations}

\medskip
\noindent\textbf{Conflict of interest.}
The author has no conflicts of interest to declare that are relevant to the
content of this article.

%%%%%%%%%%%%%%%%%%%%%%%%%%%%%%%%%%%%%%%%%%%%%%%%%%%%%%%%%%%%%%%%%%%%%%%%%


\begin{thebibliography}{99}

\bibitem{BakKir} B. Bakalov and A. Kirillov, Jr., 
\emph{Lectures on Tensor Categories and Modular Functors}, University Lecture Series, 
vol. 21, American Mathematical Society, 2001.


\bibitem{BVZ} 
B. L. G. Bakker, A. I. Veselov, M. A. Zubkov,  
\emph{Standard Model with the additional Z6 symmetry on the lattice}, 
Physics Letters B 620 (3-4), 156-163 (2005). 

\bibitem{Bantay-congr} P. Bantay, \emph{The kernel of the modular representation 
and the Galois action in RCFT}, Comm. Math. Phys. \textbf{233} (2003), no. 3, 423-438. 

\bibitem{Birman} J. S. Birman, \emph{Braids, Links, and Mapping Class Groups}, 
Annals of Mathematics Studies, vol. 82, Princeton University Press, 1974.

\bibitem{BKT} 
K. Bringmann, M. Krauel, M. P. Tuite, Zhu reduction for Jacobi n-point functions and 
applications. Trans. Amer. Math. Soc. 373 (2020), no. 5, 3261-3293.

\bibitem{Cardy86} J. L. Cardy, \emph{Operator content of two-dimensional conformally 
invariant theories}, Nucl. Phys. B \textbf{270} (1986), no. 2, 186-204.


\bibitem{DLM-orbifold} C. Dong, H. Li, and G. Mason, \emph{Modular-invariance of trace 
functions in orbifold theory and generalized Moonshine}, Comm. Math. Phys. \textbf{214} 
(2000), 1-56. 



\bibitem{GaoLiu-genusone} X. Gao and J. Liu, 
\emph{A basis theorem for genus-one conformal blocks and modular invariance of 
intertwining operators}, arXiv:2508.01294 [math.QA];


\bibitem{DonLinNg15} C. Dong, X. Lin, and S. Ng, 
\emph{Congruence property in conformal field theory}, Algebra Number Theory
 \textbf{9} (2015), no. 9, 2121-2166.

\bibitem{EichZag} M. Eichler and D. Zagier, \emph{The Theory of Jacobi Forms}, 
Progress in Mathematics, vol. 55, Birkh\"auser, 1985.

\bibitem{FarMar} B. Farb and D. Margalit, \emph{A Primer on Mapping Class Groups}, 
Princeton Mathematical Series, vol. 49, Princeton University Press, 2012.

\bibitem{Frohlich2009gb}
J. Fr\"ohlich, J. Fuchs, I. Runkel, C. Schweigert, 
 {\it Proceedings of the XVIth International Congress on Mathematical Physics} (2010), 
608-613. 

\bibitem{FreLepMeu} I. B. Frenkel, J. Lepowsky, and A. Meurman, 
\emph{Vertex Operator Algebras and the Monster}, 
Pure and Applied Mathematics, vol. 134, Academic Press, 1988.


\bibitem{HuaLog} Y.-Z. Huang, J. Lepowsky, and L. Zhang, 
\emph{Logarithmic tensor product theory I-VIII}, arXiv:1012.4193, 
arXiv:1012.4196, arXiv:1012.4197, arXiv:1012.4198, arXiv:1012.4199, 
arXiv:1012.4202, arXiv:1110.1929, arXiv:1110.1931 [math.QA].

\bibitem{Huang-IntModInv} Y.-Z. Huang, \emph{Differential equations, 
duality and modular invariance}, Commun. Contemp. Math. \textbf{7} (2005), no. 5, 
649-706.

\bibitem{Huang-Rigidity} Y.-Z. Huang, 
\emph{Rigidity and modularity of vertex tensor categories},
 Commun. Contemp. Math. \textbf{10} (2008), 871-911.

\bibitem{HuaVer08} Y.-Z. Huang, 
\emph{Vertex operator algebras and the Verlinde conjecture}, 
Commun. Contemp. Math. \textbf{10} (2008), 1031-1054. 


\bibitem{kmmzz} 
 G. Kovyrshin, A. Mekrami, J. Miller, M. A. Zubkov and A. Zuevsky, 
Topological invariant responsible for the integer QHE and noncommutative geometry, 
\emph{arXiv:2606.08868} (2026), 1-53.


%%%%%%%%%%%%%%%%%%%%%%%%%%%%%%%%%%%%%%%%%%%%%%%%%%%%%%%%%%%%%

\bibitem{K1} M. Krauel, A Jacobi theta series and its transformation 
laws. Int. J. Number Theory 10 (2014), no. 6, 1343-1354


\bibitem{K2} M. Krauel, One-point theta functions for vertex operator algebras. 
J. Algebra 481 (2017), 250-272.

\bibitem{KMar} M. Krauel, Ch. Marks, Intertwining operators and vector-valued 
modular forms for minimal models. 
Commun. Number Theory Phys. 12 (2018), no. 4, 657-686.



\bibitem{KM1} M. Krauel, G. Mason, 
Vertex operator algebras and weak Jacobi forms. Internat. 
J. Math. 23 (2012), no. 6, 1250024, 10 pp.

\bibitem{KM2}  M. Krauel, G. Mason, 
Jacobi trace functions in the theory of vertex operator algebras. 
Commun. Number Theory Phys. 9 (2015), no. 2, 273-306.


\bibitem{KMi}  M. Krauel, M. Miyamoto, 
A modular invariance property of multivariable trace functions for 
regular vertex operator algebras. J. Algebra 444 (2015), 124-142.


\bibitem{KSW} M. Krauel, J. N. Shafiq, and S. Wood, 
\emph{The modular properties of \(\alg{sl}(2)\) torus \(1\)-point functions}, 
arXiv:2403.13182 [math.QA].


\bibitem{Kac-InfDim} V. G. Kac, \emph{Infinite Dimensional Lie Algebras}, 
3rd ed., Cambridge University Press, 1990.




%% 
\bibitem{KVZ} 
M.I. Katsnelson, G. E. Volovik, M. A. Zubkov, 
\emph{Euler-Heisenberg effective action and magnetoelectric effect in multilayer graphene}, 
Annals of Physics 331, 160-187 (2013). 
%%%%%%%%%%%%%%%%%%%%%%%%%%%%%%%%%%%%%%%%%%%%%%%%%%%%%%%%%%%%

\bibitem{MasTui-torusnpt} G. Mason and M. P. Tuite, 
\emph{Torus chiral \(n\)-point functions for free bosonic and lattice 
vertex operator algebras}, Comm. Math. Phys. \textbf{235} (2003), no. 1, 47-68.



\bibitem{MasTui-genus2free} G. Mason and M. P. Tuite, \emph{Free bosonic vertex operator 
algebras on genus two Riemann surfaces I}, Comm. Math. Phys. \textbf{300} (2010), no. 3,
 673-713.




\bibitem{MTZ-Rgraded} G. Mason, M. P. Tuite, and A. Zuevsky, 
\emph{Torus \(n\)-point functions for \(\mathbb{R}\)-graded vertex operator 
superalgebras and continuous fermion orbifolds}, Comm. Math. Phys. \textbf{283} (2008), 
no. 2, 305-342.


\bibitem{miyamoto2000intertwining} M. Miyamoto, 
\emph{Intertwining operators and modular invariance}, 
2000, arXiv:math/0010180 [math.QA]. 

\bibitem{MSMTC1089} G. Moore and N. Seiberg, 
\emph{Classical and quantum conformal field theory}, Comm. Math. Phys. 
\textbf{123} (1989), 177-254.


\bibitem{RSZ} A.V. Razumov, M.V. Saveliev, A.B. Zuevsky.
Nonabelian Toda equations associated with classical Lie groups.
arXiv:math-ph/9909008.


\bibitem{SZ}  
M. Suleymanov, M. A. Zubkov, 
\emph{Wigner-Weyl formalism and the propagator of Wilson fermions
 in the presence of varying external electromagnetic field}, 
Nuclear Physics B 938, 171-199 (2019). 


\bibitem{TuiWel-genusg} M. P. Tuite and M. Welby, \emph{Genus \(g\) 
Zhu recursion for vertex operator algebras and their modules}, arXiv:2312.13717 [math.QA]. 


\bibitem{TuiZue-genus2I} M. P. Tuite and A. Zuevsky, 
\emph{Genus two partition and correlation functions for fermionic vertex operator 
superalgebras I}, Comm. Math. Phys. \textbf{306} (2011), no. 2, 419-447, arXiv:1007.5203 
[math.QA].



\bibitem{TuiZue-Szego} M. P. Tuite and A. Zuevsky, \emph{The Szeg\H{o} 
kernel on a sewn Riemann surface}, Comm. Math. Phys. \textbf{306} (2011), no. 3, 617-645, 
arXiv:1002.4114 [math.QA].



\bibitem{TuiZue-Heisenberg} M. P. Tuite and A. Zuevsky, 
\emph{A generalized vertex operator algebra for Heisenberg intertwiners}, 
J. Pure Appl. Algebra \textbf{216} (2012), no. 6, 1442-1453. 


\bibitem{Verlinde88} E. Verlinde, 
\emph{Fusion rules and modular transformations in \(2d\) conformal field theory}, 
Nucl. Phys. B \textbf{300} (1988), no. 3, 360-376. 


\bibitem{Yamauchi-IntertwinerModularity} H. Yamauchi,
 \emph{Orbifold Zhu theory associated to intertwining operators},
 J. Algebra \textbf{265} (2003), 513-538.

\bibitem{Miy-CFTauto} M. Miyamoto, \emph{Modular invariance of vertex operator 
algebras satisfying \(C_2\)-cofiniteness}, Duke Math. J. \textbf{122} (2004), 
no. 1, 51-91.



\bibitem{Zhu} Y. Zhu, \emph{Modular invariance of characters of vertex operator algebras}, 
J. Amer. Math. Soc. \textbf{9} (1996), 237-302.

%%%%%%%%%%%%%%%%%%%%%%%%%%%%%%%%%%%%%%%%%%%%%%%%%%%%%%%%%%%%%%%%%%%%%%%%%%%%%%

\bibitem{ZK} 
M.A. Zubkov, Z.V. Khaidukov, 
\emph{Topology of the momentum space, Wigner transformations, 
and a chiral anomaly in lattice models}, 
JETP Letters 106 (3), 172-178 (2017). 

\bibitem{ZA} 
M. A. Zubkov, R. A. Abramchuk, 
\emph{Effect of interactions on the topological expression for the chiral separation effect}, 
Physical Review D 107 (9), 094021 (2023). 


%%%%%%%%%%%%%%%%%%%%%%%%%%%%%%%%%%%%%%%%%%%%%%%%%%%%%%%%%%%%%%%%%%%%%
%% 	
\bibitem{Zu3}
A. Zuevsky, 
On a category of $V$-structures for foliations, 
\emph{Reviews in Mathematical Physics} \textbf{36}(4) (2024), 2430004.


\bibitem{Zu2} 
A. Zuevsky,      
Reduction cohomology of Riemann surfaces, 
\emph{Reviews in Mathematical Physics} \textbf{35} (2023), 2330005.


\bibitem{Zu} 
A. Zuevsky. 
Product-type classes for vertex algebra cohomology of foliations on complex curves.  
Comm. Math. Phys. 402 (2023), no. 2, 1453-1511.


\bibitem{Zu4}
A. Zuevsky, 
Cosimplicial meromorphic functions cohomology on complex manifolds, 
\emph{Reviews in Mathematical Physics} \textbf{35}(5) (2023), 2330002.


\bibitem{Zu1}
A. Zuevsky, 
Characterization of codimension one foliations on complex curves by connections, 
\emph{Reviews in Mathematical Physics} \textbf{34} (2022), 2230002.

\end{thebibliography}
\end{document}